\def\st{\,\mid\,}
\newcommand{\bigopls}{\mathop{\mathchoice
  {\textstyle\bigoplus}{\textstyle\bigoplus}
  {\scriptstyle\bigoplus}{\scriptscriptstyle\bigoplus}}}
\newtheorem{Theo}{Theorem}[section]
\newtheorem*{Theor}{Theorem}
\newtheorem{Prop}[Theo]{Proposition}
\newtheorem{Cor}[Theo]{Corollary}
\newtheorem{Lemma}[Theo]{Lemma}
\newtheorem{Conj}[Theo]{Conjecture}
\newtheorem*{Conjec}{Conjecture}
\theoremstyle{definition}
\newtheorem{Remark}[Theo]{Remark}
\def\mystrut(#1,#2){\vrule height #1pt depth #2pt width 0pt}
\newcommand{\Hom}{{\rm Hom}}
\newcommand{\Ext}{{\rm Ext}}
\newcommand{\mmod}{{\rm mod}}
\newcommand{\Z}{\mathbb{Z}}
\newcommand{\C}{\mathcal{C}}
\newcommand{\GL}{{\rm GL}}
\def\s{\sigma}
\def\t{\tau}
\def\z{\zeta}
\begin{document}

\title[Homological dimensions of subalgebras]{Homological dimensions for co-rank one idempotent subalgebras}

\author{Colin Ingalls}

\author{Charles Paquette}

\maketitle

\begin{abstract} Let $k$ be an algebraically closed field and $A$ be a (left and right) Noetherian associative $k$-algebra. Assume further that $A$ is either positively graded or semiperfect (this includes the class of finite dimensional $k$-algebras, and $k$-algebras that are finitely generated modules over a Noetherian central Henselian ring). Let $e$ be a primitive idempotent of $A$, which we assume is of degree $0$ if $A$ is positively graded. We consider the idempotent subalgebra $\Gamma = (1-e)A(1-e)$ and $S_e$ the simple right $A$-module $S_e = eA/e{\rm rad}A$, where ${\rm rad}A$ is the Jacobson radical of $A$, or the graded Jacobson radical of $A$ if $A$ is positively graded. In this paper, we relate the homological dimensions of $A$ and $\Gamma$, using the homological properties of $S_e$. First, if $S_e$ has no self-extensions of any degree, then the global dimension of $A$ is finite if and only if that of $\Gamma$ is. On the other hand, if the global dimensions of both $A$ and $\Gamma$ are finite, then $S_e$ cannot have self-extensions of degree greater than one, provided $A/{\rm rad}A$ is finite dimensional.
\end{abstract}

\section{Introduction}

Let $k$ be an algebraically closed field and $A$ an associative $k$-algebra. The left (or right) global dimension of $A$ is an nice homological invariant of the algebra which is defined as the supremum of the projective dimensions of all left (resp. right) $A$-modules. The notion of global dimension, which traces back to Cartan and Eilenberg in \cite{CartanEilenberg}, has been widely studied in the past decades. It was first studied in the context of commutative algebras. When $A$ is commutative Noetherian, Auslander and Buchsbaum have proven the well known fact that $A$ is regular if and only if its global dimension is finite. This result has been proven independently by Serre in \cite{Serre}. In \cite{Aus}, Auslander gave many interesting properties of the global dimension in the context of noncommutative algebras. One of his key results shows that when $A$ is both left and right Noetherian, then the left global dimension of $A$ coincides with the right global dimension of $A$. In this paper, an algebra which is both left and right Noetherian is called \emph{Noetherian}, and in this case, the global dimension of $A$ is denoted ${\rm gl.dim}\,A$. In the representation theory of finite dimensional algebras, Happel \cite{Happel} has shown the very important fact that the bounded derived category $D^b({\rm mod}\, A)$ of the category ${\rm mod}\, A$ of finite dimensional right $A$-modules admits a Serre functor if and only if ${\rm gl.dim}\,A$ is finite. In particular, $D^b({\rm mod}\, A)$ admits Auslander-Reiten triangles if and only if ${\rm gl.dim}\,A$ is finite. This also shows that the finiteness of the global dimension is a derived invariant. The global dimension also has applications in noncommutative algebraic geometry. Namely, if $\mathbb{X}$ is a projective variety, then one can consider the bounded derived category $D^b({\rm coh}(\mathbb{X}))$ of the coherent sheaves over $\mathbb{X}$; and this derived category admits a Serre functor if $\mathbb{X}$ is smooth. When $D^b({\rm coh}(\mathbb{X}))$ admits a tilting object $T$, then $D^b({\rm coh}(\mathbb{X}))$ is triangle equivalent to $D^b({\rm mod}\, {\rm End}(T))$, where ${\rm End}(T)$ is a finite dimensional $k$-algebra. Hence in this case, if $\mathbb{X}$ is smooth, then ${\rm End}(T)$ has finite global dimension.

\medskip

In order to compute the global dimension of an algebra $A$, it is often easier to reduce the computations to a smaller algebra. One way to reduce the size of an algebra is to find another closely related algebra with fewer simple modules. In this paper, we work in the wide context of associative Noetherian $k$-algebras. Our algebras are not assumed to be commutative. All algebras in this paper are assumed to be (associative) Noetherian $k$-algebras, unless otherwise indicated. In most of the results, the algebras considered are either semiperfect or positively graded.
The definitions of a semiperfect algebra and a positively graded algebra are recalled in the corresponding sections. Semiperfect $k$-algebras have finitely many - say $n$ - non-isomorphic simple right $A$-modules, and the identity $1_A$ of $A$ decomposes as a finite sum $1_A = e_1 + \cdots + e_n$ of pairwise orthogonal idempotents, each such idempotent corresponds to a unique isomorphism class of simple right $A$-modules. Let $e$ be a fixed nonzero idempotent of $A$ with $S_e$ the semi-simple top of $eA$, and let $\Gamma:=(1-e)A(1-e)$ be the corresponding idempotent subalgebra. The algebra $\Gamma$ is again Noetherian and semiperfect and admits fewer simple $\Gamma$-modules, up to isomorphism. A similar behavior arises for graded simple modules over a positively graded algebra. Consequently, we can define $e, S_e$ and $\Gamma$ as follows. A positively graded $k$-algebra $A$ has finitely many - say $n$ - non-isomorphic graded simple right $A$-modules, up to a shift, and the identity $1_A$ of $A$ decomposes as a finite sum $1_A = e_1 + \cdots + e_n$ of pairwise orthogonal idempotents of degree $0$, each such idempotent corresponds to a unique isomorphism class of graded simple right $A$-modules of degree $0$. Let $e$ be a fixed nonzero idempotent of degree $0$ of $A$ with $S_e$ the semi-simple graded top of $eA$, and let $\Gamma:=(1-e)A(1-e)$ be the corresponding idempotent subalgebra. The algebra $\Gamma$ is again Noetherian and positively graded and admits fewer graded simple $\Gamma$-modules up to isomorphism and shift.

\medskip

Even if the algebras $A, \Gamma$ are closely related, their homological behaviors can be very different. One can easily find examples where $A$ has finite global dimension while $\Gamma$ does not, and conversely. However, the homological properties of the semi-simple module $S_e$ gives more information on the relationship between the global dimensions of $A$ and $\Gamma$. When $S_e$ is simple, that is, when $e$ is primitive, we will show that this relationship is much stronger. We will show the following theorem, for when $A$ is semiperfect or positively graded.

\begin{Theor}
Suppose that $\Ext_A^i(S_e, S_e)=0$ for $i > 0$.  Then the global dimension of $A$ is finite if and only if the global dimension of $\Gamma$ is finite. More precisely, we have bounds
$${\rm gl.dim}\,\Gamma \le {\rm max}({\rm id}_A S_e + {\rm pd}_A S_e -1, {\rm gl.dim}\, A)$$ and $${\rm gl.dim}\,A \le 2 {\rm gl.dim}\,\Gamma + 2.$$
\end{Theor}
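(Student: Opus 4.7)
Let $f=1-e$. My plan is to work with the exact restriction functor $G=(-)f=\Hom_A(fA,-)\colon\mmod A\to\mmod\Gamma$ and its left adjoint $F=-\otimes_\Gamma fA$, which sends $e_j\Gamma$ to $e_jA$ for $e_j\neq e$ and thus carries $\Gamma$-projectives to $A$-projectives. Two consequences of the hypothesis $\Ext_A^i(S_e,S_e)=0$ for $i>0$ will be used repeatedly: any $A$-module whose composition factors are all $S_e$ is semisimple of the form $S_e^{\oplus c}$ (since $\Ext_A^1(S_e,S_e)=0$ splits every extension), and any finitely generated $A$-module $M$ with no $S_e$ in its top satisfies $M=MfA$ (otherwise the nonzero quotient $M/MfA$ is $f$-annihilated, producing an $S_e$ in its top and hence in that of $M$). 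In the latter case the counit $F(Mf)\twoheadrightarrow M$ is surjective, and its kernel is $f$-annihilated via the identity $F(Mf)f=Mf$, hence $S_e$-isotypic.

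For the first inequality, I bound $\mathrm{pd}_\Gamma T$ for each simple $\Gamma$-module $T=S_jf$ with $e_j\neq e$. From the minimal projective $A$-resolution $0\to P_n\to\cdots\to P_0\to S_j\to 0$ of length $n=\mathrm{pd}_A S_j\le\mathrm{gl.dim}\,A$, decompose $P_i=P_i'\oplus(eA)^{a_i}$, noting that $a_i=\dim\Ext_A^i(S_j,S_e)$ vanishes whenever $i>\mathrm{id}_A S_e$. Applying $G$ produces an exact $\Gamma$-resolution of $T$ whose $i$-th term is $\Gamma$-projective when $a_i=0$ and has projective dimension at most $\mathrm{pd}_\Gamma(eAf)\le\mathrm{pd}_A S_e-1$ when $a_i>0$. (The bound on $\mathrm{pd}_\Gamma(eAf)$ is obtained by restricting the minimal resolution of $S_e$ under $G$: its terms in degrees $\ge 1$ contain no $eA$-summand by hypothesis, so they become $\Gamma$-projective.) The telescoping bound $\mathrm{pd}\,M\le\max_i(\mathrm{pd}\,X_i+i)$ for a finite resolution $X_\bullet\to M$ then yields
\[
\mathrm{pd}_\Gamma T\le\max\bigl(\mathrm{pd}_A S_e+\mathrm{id}_A S_e-1,\ n\bigr)\le\max\bigl(\mathrm{pd}_A S_e+\mathrm{id}_A S_e-1,\ \mathrm{gl.dim}\,A\bigr).
\]

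For the second inequality, set $g=\mathrm{gl.dim}\,\Gamma$. I first show $\mathrm{pd}_A S_e\le g+1$. Applying $G$ to the minimal resolution of $S_e$ yields a $\Gamma$-projective resolution of $eAf$ in positive degrees, so its $g$-th syzygy---which equals $(\Omega^{g+1}S_e)f$---is $\Gamma$-projective, as $\mathrm{pd}_\Gamma(eAf)\le g$. Since $\Omega^{g+1}S_e$ has no $S_e$ in its top (its top equals that of $P_{g+1}$, which has no $eA$-summand by hypothesis), the counit $F((\Omega^{g+1}S_e)f)\twoheadrightarrow\Omega^{g+1}S_e$ has $A$-projective source and $S_e$-isotypic kernel, giving $0\to S_e^{\oplus c}\to P\to\Omega^{g+1}S_e\to 0$ with $P$ projective. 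If $\mathrm{pd}_A S_e>g+1$, then $\Omega^{g+1}S_e$ is not projective and dimension shifting along this sequence forces $\mathrm{pd}_A S_e\le\mathrm{pd}_A S_e-g-2$, a contradiction.

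For a simple $S_j$ with $e_j\neq e$, the counit $F(S_jf)\twoheadrightarrow S_j$ has $S_e$-isotypic kernel, so it suffices to bound $\mathrm{pd}_A F(S_jf)$. Take a projective $\Gamma$-resolution $Q_\bullet\to S_jf$ of length $\le g$ and apply $F$ to obtain a complex of $A$-projectives whose homology in degree $\ell\ge 1$ is $\mathrm{Tor}_\ell^\Gamma(S_jf,fA)$. Each such $\mathrm{Tor}$ is right-annihilated by $f$ (since $\mathrm{Tor}_\ell^\Gamma(-,\Gamma)=0$ for $\ell\ge 1$), hence is $S_e$-isotypic with $A$-projective dimension at most $\mathrm{pd}_A S_e\le g+1$. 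Splicing the short exact sequences of cycles, boundaries, and homology at each position of this complex gives $\mathrm{pd}_A F(S_jf)\le\mathrm{pd}_A S_e+g+1\le 2g+2$, so $\mathrm{pd}_A S_j\le 2g+2$, and combining with the bound on $\mathrm{pd}_A S_e$ yields $\mathrm{gl.dim}\,A\le 2g+2$. The main technical obstacle will be the bookkeeping in this final splicing (essentially a hyper-Tor spectral sequence computation) to secure the additive accumulation $\mathrm{pd}_A S_e+g+1$ rather than a multiplicative one.
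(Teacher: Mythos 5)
Your proof of the first inequality follows essentially the same route as the paper: restrict (your $G$, the paper's $F$) the minimal $A$-resolution of a simple $S_j$, observe that the $eA$-summands appear only in degrees $\le \mathrm{id}_A S_e$ and, after restriction, contribute copies of $eAf$ whose $\Gamma$-projective dimension is $\mathrm{pd}_A S_e - 1$, and telescope. The paper phrases the telescoping as iterated use of its Lemma \ref{ConstructionProjResol} (a cone/Horseshoe argument), but the content is identical.

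Your proof of the second inequality is a genuinely different route. The paper peels the $S_e$-isotypic top off $M$ and runs an induction on $\mathrm{pd}_\Gamma F(M)$, getting $\mathrm{pd}_A M \le \mathrm{pd}_A S_e + \mathrm{pd}_\Gamma F(M) + 1$ directly (Proposition \ref{LastProp2}); the bound $\mathrm{pd}_A S_e \le \mathrm{gl.dim}\,\Gamma + 1$ then falls out at once because $\Omega := \Omega^1 S_e$ satisfies $\Ext^i_A(\Omega, S_e) = 0$ for all $i \ge 0$, so $\mathrm{pd}_A \Omega = \mathrm{pd}_\Gamma F(\Omega) \le \mathrm{gl.dim}\,\Gamma$. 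You instead go from the $\Gamma$-side: push a length-$g$ $\Gamma$-resolution of $S_j f$ up via $-\otimes_\Gamma fA$, observe that the positive-degree homology (the $\mathrm{Tor}$ groups) is $f$-annihilated and hence $S_e$-isotypic by the SNLC-type Lemma \ref{SNLC}, and splice. Both lead to the same numerics $\mathrm{pd}_A S_e + g + 1 \le 2g+2$; the paper's induction is shorter and avoids the cycles-boundaries-homology bookkeeping, but your hyperhomology argument is correct and arguably illuminates better why the bound is additive rather than multiplicative.

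There is one genuine gap, though it is easy to repair. Your argument that $\mathrm{pd}_A S_e \le g+1$ ends with ``dimension shifting along this sequence forces $\mathrm{pd}_A S_e \le \mathrm{pd}_A S_e - g - 2$, a contradiction.'' That is a contradiction only when $\mathrm{pd}_A S_e$ is already known to be finite; when $\mathrm{pd}_A S_e = \infty$ (which is exactly the case you need to rule out for the ``$\mathrm{gl.dim}\,\Gamma < \infty \Rightarrow \mathrm{gl.dim}\,A < \infty$'' direction) the displayed inequality is vacuous. The fix: from your short exact sequence $0 \to S_e^{\oplus c} \to P \to \Omega^{g+1}S_e \to 0$ with $P$ projective, compare with the minimal cover $P_{g+1} \twoheadrightarrow \Omega^{g+1}S_e$ to split off $P_{g+1}$ from $P$ and conclude $\Omega^{g+2}S_e \cong S_e^{\oplus c}$. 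If $c \ge 1$ this forces $eA$ to be a summand of $P_{g+2}$, contradicting $\Ext^{g+2}_A(S_e,S_e) = 0$; hence $c = 0$ and $\Omega^{g+1}S_e$ is projective. Alternatively, use the paper's one-line observation about $\Omega^1 S_e$ above, which sidesteps the issue entirely.
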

In the statement, id$_A$ stands for the injective dimension and pd$_A$ stands for the projective dimension.
Surprisingly, when $A$ is finite dimensional (hence is Noetherian semiperfect with $A/{\rm rad}A$ finite dimensional) and $e$ is primitive, if one assumes that both ${\rm gl.dim}\,A, {\rm gl.dim}\,\Gamma$ are finite, then $\Ext_A^i(S_e, S_e)=0$ for $i > 0$. We actually get a stronger version as follows.

\begin{Theor}
Assume that $A$ is finite dimensional, $e$ is primitive and both ${\rm pd}_A S_e$ and ${\rm pd}_\Gamma (eA(1-e))$ are finite. Then $\Ext_A^i(S_e, S_e)=0$ for $i > 0$.
\end{Theor}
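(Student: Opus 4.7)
The plan is to push the minimal projective resolution of $S_e$ down to $\Gamma$ via the exact functor $(-)\cdot(1-e)$, and then combine finite projective dimension of $M:=eA(1-e)$ over $\Gamma$ with Krull-Schmidt and the nilpotency of radical endomorphisms to force the multiplicities of $eA$ in the resolution of $S_e$ to vanish.

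Set $f=1-e$. Since $k$ is algebraically closed and $e$ is primitive, the minimal projective resolution $P_\bullet\to S_e$ over $A$ has $P_0=eA$ and $P_i=(eA)^{a_i}\oplus(fA)^{b_i}$ for $i\geq 1$, with $a_i=\dim_k\Ext_A^i(S_e,S_e)$ by minimality; the goal is to show $a_i=0$ for every $i\geq 1$. Write $r={\rm pd}_A S_e$. Because $S_e\cdot f=0$ and the functor $(-)\cdot f$ is exact, we obtain an exact complex of finite-dimensional right $\Gamma$-modules
\[0\to R_r\to R_{r-1}\to\cdots\to R_1\to M\to 0,\qquad R_i=M^{a_i}\oplus\Gamma^{b_i}\text{ for }i\geq 1.\]
This is a resolution of $M$ of length $r-1$ by objects in ${\rm add}(M\oplus\Gamma)$, and it is a genuine projective resolution over $\Gamma$ precisely when every $a_i$ vanishes.

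Arguing by contradiction, suppose $s\geq 1$ is the minimum index with $a_s\neq 0$. Then $R_i=\Gamma^{b_i}$ is projective for $1\le i<s$, so $K_{s-1}:=\ker(R_{s-1}\to R_{s-2})$ (with $R_0:=M$) coincides up to projective summands with the $\Gamma$-syzygy $\Omega_\Gamma^{s-1}M$, and there is a short exact sequence
\[0\to K_s\to M^{a_s}\oplus\Gamma^{b_s}\to K_{s-1}\to 0,\]
in which $K_s$ is presented by $R_{s+1},\ldots,R_r\in{\rm add}(M\oplus\Gamma)$. The mechanism for the contradiction is transparent when $s=1$: the complex collapses to an isomorphism $M\cong M^{a_1}\oplus\Gamma^{b_1}$, Krull-Schmidt forces $a_1\leq 1$, and when $a_1=1$ the induced map $M\to M$ is left multiplication by a radical element of $\End_A(eA)=eAe$ (because the differential $P_1\to P_0$ of the minimal $A$-resolution lies in the radical of $eAe$), hence nilpotent---so it cannot be an isomorphism unless $M=0$. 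The edge case $M=0$ is handled by observing that $eAe$ must then have finite global dimension, forcing $eAe=k$ and $a_i=0$ trivially.

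The main obstacle is promoting this base-case argument to arbitrary $s$. A natural tactic is induction on $s$: peel off the first $s-1$ projective layers of $R_\bullet$ and reduce to an analogous configuration involving $\Omega_\Gamma^{s-1}M$ in place of $M$, while tracking a suitable truncation of the algebra. Alternatively, one can pass to $D^b(\mmod\Gamma)$ where $R_\bullet\simeq 0$, resolve the remaining $M$-summands projectively, and compare the resulting bounded projective resolution of $M$ with the minimal one, extracting the vanishing of the $a_i$'s from minimality. In either approach the delicate task is to preserve the $A$-side input---that differentials in $P_\bullet$ are radical, and therefore induce nilpotent operators on $M$-summands---throughout the homological bookkeeping, so that at the final step one again forces a radical endomorphism of a nonzero module to act as a unit.
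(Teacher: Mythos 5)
Your high-level strategy is aligned with the paper's: apply $F=(-)\cdot(1-e)$ to the minimal $A$-projective resolution of $S_e$ to get an exact complex whose terms live in ${\rm add}(F(eA)\oplus\Gamma)$, and then leverage the fact that differentials in a minimal resolution are radical --- hence induce nilpotent endomorphisms on $F(eA)$-summands --- together with the finiteness of ${\rm pd}_\Gamma F(eA)$ to force the multiplicities $a_i$ to vanish. The paper also uses the vanishing of $\Ext^1_A(S_e,S_e)$ from the strong no-loop theorem, which you implicitly rely on when invoking nilpotency of radical elements of $eAe$.

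However, your proposal has a genuine gap, and it appears already in the ``transparent'' base case. When $s=1$ is the minimal index with $a_s\neq 0$, the exact complex you write down is
\[0\to R_r\to R_{r-1}\to\cdots\to R_1\to M\to 0,\]
and there is no reason for this to ``collapse to an isomorphism $M\cong M^{a_1}\oplus\Gamma^{b_1}$'' unless $r=1$. The surjection $R_1\to M$ has a (typically nonzero) kernel resolved by the higher $R_i$, so the Krull--Schmidt comparison you invoke simply does not apply. For $r=1$ the statement is vacuous in the finite-dimensional case (since ${\rm pd}_A S_e<\infty$ already forces $\Ext^1_A(S_e,S_e)=0$ by \cite{ILP}), and for $r>1$ your base case does not go through. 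The subsequent inductive step --- ``peel off the first $s-1$ projective layers,'' or ``pass to $D^b(\mmod\Gamma)$ where $R_\bullet\simeq 0$'' --- is left entirely unformalized, and you yourself note that the delicate point is preserving the radical/nilpotency information through the homological bookkeeping. That delicate point is precisely where the real work lies, and it is what the paper handles with the technical Lemmas \ref{cruciallemma}, \ref{technical}, and especially \ref{MinResolution2} (which uses Engel's theorem on the Lie algebra generated by the induced nilpotent maps, to show a lifted map cannot be a section). Without filling in that step --- which in the paper is a fairly intricate diagram chase culminating in a contradiction ${\rm pd}_\Gamma F(eA)=s+d_e(S_e)-1\neq s$ --- the proof is not complete.

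In short: the germ of the idea (exactness of $F$, minimality, nilpotency of radical maps) matches the paper's, but the key quantitative control that makes the contradiction materialize is missing, and the one concrete step you do spell out is incorrect as stated.
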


If $A$ is not finite dimensional, the above theorem is clearly not true. Take for instance the one-point extension $$A = \left(%
\begin{array}{cc}
  k[[x]] & k[[x]]/\langle x \rangle\\
  0 & k \\
\end{array}%
\right),$$ where $k[[x]]$ is the $k$-algebra of formal power series in one variable. Then $A$ has global dimension two and is Noetherian semiperfect. For the idempotent $e=e_{11}$ of $A$, the simple module $S_e$ is not self-orthogonal since $\Ext^1_A(S_e, S_e) \ne 0$ and $\Gamma = e_{22}Ae_{22} \cong k$ has global dimension zero. If one rather considers the polynomial algebra $k[x]$ instead of $k[[x]]$, then the analogues of $A,\Gamma$ defined above are Noetherian positively graded and for the same reason, we get a counter-example. Observe however that when $A$ is finite dimensional and $e$ is primitive with ${\rm pd}_A S_e < \infty$, then the condition $\Ext^1_A(S_e,S_e)=0$ is automatically verified. This indeed follows from \cite{ILP}. So in the general case ($A$ is a Noetherian $k$-algebra and is either semiperfect or positively graded), we have the following conjecture.

\begin{Conjec}
Assume that $e$ is primitive and of degree zero if $A$ is positively graded, and assume that $\Ext^1_A(S_e,S_e)=0$. If both ${\rm pd}_A S_e$ and ${\rm pd}_\Gamma (eA(1-e))$ are finite, then $\Ext_A^i(S_e, S_e)=0$ for $i > 0$.
\end{Conjec}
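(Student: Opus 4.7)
I proceed by induction on $r\geq 1$, aiming to prove $\Ext_A^r(S_e,S_e)=0$. Set $m={\rm pd}_A S_e$ and $d={\rm pd}_\Gamma(eA(1-e))$. Let $P_\bullet\to S_e$ be the minimal projective resolution over $A$, and write each $P_i=(eA)^{a_i}\oplus Q_i$ where $Q_i$ has no $eA$-summand, so that $a_i=\dim_{\End_A S_e}\Ext_A^i(S_e,S_e)$. We have $a_0=1$ and $a_1=0$ by hypothesis, and the goal is $a_r=0$ for all $r\geq 2$.

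\textbf{Step 1: Translation to $\Gamma$.} Applying the exact functor $(-)(1-e)\colon {\rm mod}\,A\to {\rm mod}\,\Gamma$ to $P_\bullet$ yields, since $S_e(1-e)=0$, an exact sequence of $\Gamma$-modules
\[ \cdots\to P_r(1-e)\to\cdots\to P_1(1-e)\to eA(1-e)\to 0, \]
with $P_i(1-e)=(eA(1-e))^{a_i}\oplus Q_i(1-e)$ and each $Q_i(1-e)$ projective over $\Gamma$. Using $e_j(1-e)=e_j$ for $j\neq 1$ together with $(1-e){\rm rad}(A)(1-e)={\rm rad}(\Gamma)$ (valid in both the semiperfect and the positively graded settings), one checks that whenever $a_1=\cdots=a_{r-1}=0$, the sequence above in degrees $\leq r-1$ is the beginning of the minimal projective resolution of $eA(1-e)$ over $\Gamma$; in particular $\Omega_\Gamma^{r-1}(eA(1-e))\cong \Omega_A^r(S_e)(1-e)$.

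\textbf{Step 2: Splicing.} Assume inductively that $a_1=\cdots=a_{r-1}=0$ for some $r\geq 2$. My plan for forcing $a_r=0$ is to splice the minimal $\Gamma$-projective resolution $R_\bullet\to eA(1-e)$ (of length $d$) into the restricted complex $P_\bullet(1-e)$ by replacing each summand $(eA(1-e))^{a_i}$ with a copy of $R_\bullet^{a_i}$. This produces a bounded double complex whose total complex is an exact complex of projective $\Gamma$-modules concentrated in degrees $[0,m+d]$, hence null-homotopic. By Krull--Schmidt (valid in both the semiperfect and the positively graded settings), this null-homotopy translates into numerical matchings between the multiplicities $a_0,\ldots,a_m$ and the Betti numbers of $eA(1-e)$ over $\Gamma$.

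\textbf{Main obstacle.} The principal challenge is to combine these numerical matchings with the assumptions $a_0=1$, $a_1=0$, and the finiteness of $m$, to conclude $a_r=0$ for every $r\geq 2$. In the finite-dimensional case (the second theorem above), the analogous step is handled by a dimension count together with the $\Ext^1$-vanishing supplied by \cite{ILP}; neither the dimension count nor the automatic vanishing is available here. The most promising substitute is a spectral sequence argument associated with the recollement $({\rm mod}(A/A(1-e)A),\,{\rm mod}\,A,\,{\rm mod}\,\Gamma)$ attached to the idempotent $1-e$: it should express $\Ext_A^\bullet(S_e,S_e)$ in terms of $\Ext^\bullet_\Gamma$-groups involving the bimodules $eA(1-e)$ and $(1-e)Ae$, with degeneration under the two finiteness hypotheses (and the given $\Ext^1$-vanishing) forcing the claimed vanishing. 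Establishing the convergence and controlling the differentials of this spectral sequence is the crux of the conjecture.
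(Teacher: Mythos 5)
Your proposal is an honest sketch, not a proof: you explicitly acknowledge in the ``Main obstacle'' paragraph that the crucial step is missing and that you do not know how to carry out the spectral sequence degeneration you gesture at. That gap is real and is precisely where the difficulty of the problem lies, so the proposal as written does not establish the statement.

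Beyond the acknowledged gap, the route you envision differs substantially from what the paper does, and I think your route is unlikely to close. First, the paper does not prove the conjecture in the full generality you attempt; it proves it only under the additional hypothesis that $A/{\rm rad}A$ is finite dimensional (automatic in the positively graded case), and that hypothesis is used in an essential way. Second, the paper's actual mechanism is not a numerical matching or a spectral sequence. The key ingredients are: (a) the preparatory bounds of Section 5 (Lemma \ref{cruciallemma} giving ${\rm pd}_\Gamma F(eA)\ge {\rm pd}_A S_e-1$ and $d_e(S_e)\le\max(0,{\rm pd}_A S_e-2)$, and Lemma \ref{technical} forcing two \emph{consecutive} nonvanishing $\Ext$-degrees $d_e(S_e)-1$ and $d_e(S_e)$), which allow one to locate a window $L_{d+1}\to (eA)^p\oplus P_d\to (eA)^q\oplus P_{d-1}\to L_{d-1}$ in the minimal resolution of $S_e$ with both $p,q>0$; and (b) the lifting lemmas \ref{MinResolution1}, \ref{MinResolution2} (resp. \ref{LemmaLiftingGraded} in the graded case), which say that a radical morphism between objects with isomorphic minimal resolutions cannot lift to a section at any stage. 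Item (b) is the genuinely new idea and is where finite dimensionality of $A/{\rm rad}A$ enters, via decomposing matrix entries as scalar-plus-radical and invoking Engel's theorem. Your Step 2 splicing produces a bounded exact complex of projectives, but ``null-homotopic by Krull--Schmidt'' does not by itself yield useful Betti-number constraints, because the spliced complex is not minimal and extracting its minimal part is exactly the content of (b). Without a substitute for (b), the argument cannot force ${\rm pd}_\Gamma F(L_{d-1})=s+1$, which is what drives the final contradiction ${\rm pd}_\Gamma F(eA)=s+d-1\ne s$.

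A smaller point: your Step 1 identity $\Omega_\Gamma^{r-1}(eA(1-e))\cong\Omega_A^r(S_e)(1-e)$ under the inductive vanishing hypothesis is correct and matches what the paper uses implicitly via Proposition \ref{HomologicalProp}(4); but minimality over $\Gamma$ of the restricted complex in that range also needs the fact (Proposition \ref{HomologicalProp}(1)) that $F$ sends radical maps between objects of ${\rm add}((1-e)A)$ to radical maps, which is worth stating. Also, the recollement you mention is with respect to the idempotent $1-e$, while the relevant simple $S_e$ lives over the ``other'' factor $A/A(1-e)A$, so the spectral sequence you want would have to mix the two recollement functors nontrivially; the paper avoids this entirely by working directly with syzygies and the lifting lemmas.
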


In this paper, we show that the conjecture holds with the additional assumption that $A/{\rm rad}A$ is finite dimensional (which is verified in the positively graded case).

\medskip

Finally, note that if $e$ is not primitive, then the above conjecture does not hold. For instance, take the algebra $A = kQ/I$, where $Q$ is the quiver
$$\xymatrix{1 \ar[r]^\alpha & 2 \ar[r]^\beta & 3 \ar[dl]^\gamma \\ & 4 \ar[ul]^\delta}$$
with $I = \langle \delta\gamma, \alpha\delta \rangle$ and take $e$ the sum of the primitive idempotents of the vertices $1,3$. The algebra $A$ has global dimension three while $\Gamma$ is hereditary. Clearly, $\Ext^1_A(S_e,S_e)=0$. However, $\Ext^2_A(S_e, S_e) \ne 0$ because of the minimal relation $\delta\gamma$. In general, we do not know whether there exists a right $A$-module $M_e$ that completely controls the relationship between the homological dimensions of $A$ and $\Gamma$, when $e$ is not primitive.

\section{Semiperfect Noetherian algebras}

We refer the reader  to \cite[page 301]{AndFul} for properties of semiperfect algebras. Let $A$ be an associative $k$-algebra where $k$ is algebraically closed. We denote by mod$\,A$ the category of finitely generated right $A$-modules. Let ${\rm rad} A$ denote the Jacobson radical of $A$, that is, the intersection of all maximal right (or left) ideals of $A$. Then $A$ is \emph{semiperfect} if $A/{\rm rad}A$ is a semi-simple $k$-algebra, and idempotents lift modulo ${\rm rad} A$. By the well known Wedderburn-Artin theorem, the first condition means
$$A/{\rm rad}A \cong M_{m_1}(k_1) \times \cdots \times M_{m_n}(k_n)$$
as $k$-algebras, where for $1 \le i \le n$, $M_{m_i}(k_i)$ is the simple $k$-algebra of all $m_i \times m_i$ matrices over a division $k$-algebra $k_i$. If $A$ (or $A/{\rm rad}A$) is finite dimensional, since $k = \bar k$, we have $k_i = k$ for all $i$. Using the lifting of idempotents property, this yields a decomposition $1_A = e_1 + \cdots + e_n$ of $1_A$ into pairwise orthogonal idempotents. Note that the simple right $A$-modules are the simple right $A/{\rm rad}A$-modules and hence, there are exactly $n$ simple right $A$-modules up to isomorphism. In this section, every $k$-algebra considered is semiperfect, unless otherwise indicated.

\medskip

While semiperfect algebras form a nice class of algebras having well behaved homological properties, finitely generated modules may not have finitely generated projective resolutions. Even worse, for an arbitrary semiperfect algebra, the left global dimension may differ from the right global dimension. To avoid these problems, and since most of our applications fall in this class, we consider only Noetherian algebras, that is, algebras that are both left and right Noetherian. So in this section, all algebras considered are both semiperfect and Noetherian, unless otherwise indicated.

\medskip

Let $e$ be a fixed idempotent of $A$. By considering the \emph{idempotent subalgebra} $\Gamma:=(1-e)A(1-e)$, we are reducing the number of simple modules of the algebra, but keeping the property of $\Gamma$ being semiperfect and Noetherian; see \cite[Cor. 27.7]{AndFul} and \cite[Prop. 2.3]{Sando}. Since we are studying homological properties of algebras, and since the properties of being semiperfect and Noetherian are preserved under Morita-equivalence, for the same reason as above, we may assume that our algebra is basic, which means that $m_1 = \cdots = m_n=1$ and all the $e_j$ are primitive idempotents. For simplifying notation, there is no loss of generality in fixing $e = e_1$, when we are given that $e$ is primitive.
Observe that $e_1A, \ldots, e_nA$ represent all the indecomposable projective right $A$-modules, up to isomorphism; see \cite{AndFul}. In particular, every indecomposable projective module is cyclic.
There is an $A$-module which is of special interest for relating the homological properties of $A$ and $\Gamma$. This module is the semi-simple right $A$-module at $e$, that is, $S_e:=eA/e{\rm rad}A$.

\medskip

A class of examples of semiperfect Noetherian $k$-algebras are the finite dimensional $k$-algebras, which we know are Morita equivalent to $kQ/I$ for some finite quiver $Q$ and some admissible ideal $I$ of $kQ$.  Other examples are obtained as follows.

Let $Q$ be a finite quiver and denote by $J_Q$ the ideal of $kQ$ generated by all arrows. Let $I$ be an ideal of $kQ$ with $I \subseteq J_Q^2$ and which is generated by homogeneous elements. Let $\Lambda:=kQ/I$ and consider $J$ the ideal of $\Lambda$ generated by all classes of arrows. We can define a topology on $\Lambda$, which is called the \emph{$J$-adic topology}, as follows. A subset $U$ of $\Lambda$ is \emph{open} if for every $x$ in $U$, there exists an integer $r$ with $x + J^r \subseteq U$. It can be checked that this is a topology on $\Lambda$ such that the ring operations $$\cdot \,: \Lambda \times \Lambda \to \Lambda \;\;\; \text{and}\;\;\; +: \Lambda \times \Lambda \to \Lambda$$ are continuous, where we use the product topology on $\Lambda \times \Lambda$. This makes $\Lambda$ into a \emph{topological algebra}. The notion of a topological algebra is a classical notion that is widely studied by commutative algebraists. In noncommutative algebra, topological algebras are also studied, however, they do not share all the properties that hold for their commutative counterparts. The reader is invited to see, for instance, the work of Gabriel \cite{Gabriel}, where the notion of pseudo-compact algebras, which is a particular class of topological algebras, is used. We refer the reader to \cite[Chapter 10]{McDonald} for the very basic definitions and properties of topological algebras.

We say that $\Lambda$ is \emph{complete} is it is complete as a topological space, that is, every Cauchy sequence $(x_i)_{i \ge 0}$ in $\Lambda$ converges. Recall that a Cauchy sequence $(x_i)_{i \ge 0}$ in $\Lambda$ converges if for every neighborhood of zero $U$, there exists $r \ge 0$ such that for $s_1, s_2 \ge r$, we have $x_{s_1} - x_{s_2} \in U$. Since the powers of $J$ form a basis of neighborhoods of $0$, one may take a power of $J$ for the open set $U$. In our setting, the topology is always Hausdorff, since $\cap_{i\ge 1}J^i = 0$; see \cite[Lemma 10.1]{McDonald}. For $i \ge 0$, let $p_i : \Lambda/J^{i+1} \to \Lambda/J^i$ be the canonical projection.  The inverse limit $A:=\varprojlim \Lambda/J^i$ of the inverse system $$A/J \stackrel{p_1}{\longleftarrow} A/J^2 \stackrel{p_2}{\longleftarrow} \cdots $$ is an algebra and is called the \emph{completion} of $\Lambda$ with respect to the $J$-adic topology. Indeed, $A$ is a topological algebra with a $\hat J$-adic topology, where $\hat J$ is the ideal of $A$ defined as $\hat J : = \varprojlim J/J^i$. The algebra $A$ with the $\hat J$-adic topology is complete. We have an induced canonical map $\Lambda \to A$ whose kernel is $\cap_{i\ge 1}J^i = 0$. Therefore, $\Lambda$ can be viewed as a subalgebra of $A$.

\begin{Prop}
The completion $A$ of $\Lambda$ is semiperfect.
\end{Prop}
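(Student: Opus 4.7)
The plan is to identify the Jacobson radical of $A$ with $\hat J$, verify that $A/\hat J$ is semisimple, and then observe that idempotents lift since they already live in the subalgebra $\Lambda$. Concretely, I would proceed in three steps.

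First, I would analyze the quotient $A/\hat J$. From the inverse-limit description, the canonical map $\pi_0 : A = \varprojlim \Lambda/J^i \to \Lambda/J$ is a surjective algebra homomorphism whose kernel is exactly $\hat J = \varprojlim J/J^i$. Since $I \subseteq J_Q^2$, we have $\Lambda/J \cong kQ/J_Q \cong k^n$, where $n = |Q_0|$. Thus $A/\hat J \cong k^n$ is semisimple, and in particular $\mathrm{rad}(A) \subseteq \hat J$ (as $\hat J$ is an intersection of maximal two-sided ideals, or equivalently because the radical of a semisimple quotient is zero).

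Second, I would show the reverse inclusion $\hat J \subseteq \mathrm{rad}(A)$ by the standard geometric-series trick available in complete topological algebras. Given $x \in \hat J$, the powers $x^m$ lie in $\hat J^m$, and since $A$ carries the $\hat J$-adic topology in which it is complete and Hausdorff, the partial sums $y_N := \sum_{m=0}^{N} x^m$ form a Cauchy sequence (because $y_{N+1}-y_N = x^{N+1}\in \hat J^{N+1}$). Its limit $y \in A$ satisfies $(1-x)y = y(1-x)=1$, so $1-x$ is a unit. The same argument applied to $xr$ and $rx$ for arbitrary $r\in A$ shows that $x \in \mathrm{rad}(A)$. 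Combining with the first step gives $\mathrm{rad}(A) = \hat J$, so $A/\mathrm{rad}(A)$ is semisimple.

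Third, I would check the lifting of idempotents modulo $\mathrm{rad}(A)$. Since $A/\mathrm{rad}(A) \cong \Lambda/J \cong k^n$, any idempotent of $A/\mathrm{rad}(A)$ is a sum of the $n$ primitive orthogonal idempotents $\bar e_1,\dots,\bar e_n$ coming from the vertices of $Q$. Each $\bar e_i$ is the image of the vertex idempotent $e_i$ in the subalgebra $\Lambda \subseteq A$, and the $e_i$ are already pairwise orthogonal idempotents. Hence any idempotent lifts through the natural inclusion $\Lambda \hookrightarrow A$. Together with the second step this establishes both defining conditions, so $A$ is semiperfect.

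The one subtlety I would be careful about is the second step: one must genuinely use that the ideal-theoretic powers $\hat J^m$ tend to $0$ in the completion topology, which follows from the fact that $\hat J^m$ is contained in the kernel of the projection $A \to \Lambda/J^m$, giving a basis of neighborhoods of $0$. Once this is in hand, completeness delivers the inverse of $1-x$ and the rest of the argument is formal.
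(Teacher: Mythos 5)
Your proof is correct, but it takes a genuinely different route from the paper. The paper invokes \cite[Thm. 27.6]{AndFul}, which reduces semiperfectness to showing that each corner algebra $e_iAe_i$ is local; it then exhibits elements of $e_iAe_i$ as formal sums $\sum_{j\ge 0}\lambda_j t_j$ over a basis $T$ of path classes and checks, via \cite[Prop. 15.15]{AndFul}, that such an element is a unit precisely when the constant coefficient $\lambda_0$ is nonzero. You instead verify the definition of semiperfectness directly: you identify $\hat J$ with $\mathrm{rad}(A)$ by the completeness/geometric-series argument, observe that $A/\hat J \cong \Lambda/J \cong k^n$ is semisimple, and lift idempotents through the embedding $\Lambda \hookrightarrow A$ since the $e_i$ already exist in $\Lambda$. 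Both arguments are valid and both ultimately rest on the same convergence phenomenon --- the paper's invertibility criterion for $\sum\lambda_j t_j$ is a geometric series in disguise, localized to a corner. What the paper's route buys is that it never needs to pin down $\mathrm{rad}(A)$ globally or argue that $\hat J^m$ shrinks to zero; what your route buys is a cleaner conceptual picture, explicit identification of $\mathrm{rad}(A) = \hat J$ (which is useful information in its own right), and a more visible use of completeness. Your flagged subtlety about $\hat J^m$ lying inside the kernel of $A \to \Lambda/J^m$ is exactly the right point to be careful about, and the inclusion in that direction (which is all you need) is straightforward.
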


\begin{proof}
By \cite[Thm 27.6]{AndFul}, since $1_A = e_1 + \cdots + e_n$ is a decomposition of $1_A$ as a sum of pairwise orthogonal idempotents in $A$, it is sufficient to prove that for $1 \le i \le n$, $e_iAe_i$ is a local algebra. Fix $i$ with $1 \le i \le n$. Using the definition of $A$ as an inverse limit $A=\varprojlim \Lambda/J^i$, one may think of the algebra $e_iAe_i$ as follows. Consider $T'$ the set of all nonzero classes modulo $I$ of paths in $Q$ from $i$ to $i$. Observe that $T'$ contains a basis of $e_i \Lambda e_i$. So let $T\subseteq T'$ be a basis of $e_i \Lambda e_i$. Observe that we can define the length of an element in $T$ as the length of the corresponding path, and this is well defined since $I$ is homogeneous. Take a total order on the elements of $T$ refining path length, so that $T = \{t_0 = e_i, t_1, t_2, \ldots\}$. An element in $e_iAe_i$ can be thought of as a formal sum $\sum_{j \ge 0} \lambda_j t_j$ where $\lambda_j \in k$. Addition is done termwise and multiplication is done as for multiplying power series:
$$(\lambda_0 e_i + \lambda_1 t_1 + \lambda_2 t_2 + \cdots) \cdot (\mu_0 e_i + \mu_1 t_1 + \mu_2 t_2 + \cdots) = \sum_{t \in T}\left(\sum_{\stackrel{t_{i_1}, t_{i_2} \in T}{t_{i_1} t_{i_2} = t}}\lambda_{i_1}\mu_{i_2}\right)t.$$
It is then easily verified that an element $\sum_{j \ge 0} \lambda_j t_j$ has a left inverse if and only if $\lambda_0$ is non-zero. This proves, by Proposition 15.15 in \cite{AndFul}, that $e_iAe_i$ is a local algebra, and hence that $A$ is semiperfect.
\end{proof}

In general, the completion $A$ of an arbitrary topological Noetherian $k$-algebra $\Lambda$ may fail to be Noetherian. However, in our setting,  $A$ is always Noetherian when $\Lambda$ is, since $I$ is homogeneous; see \cite[Prop. 2.1]{Connell}.

\medskip

If $A$ is a finitely generated module over a commutative Noetherian Henselian local ring then $A$ is semiperfect.  Indeed, this characterizes Henselian rings as in \cite[Lemma 1.12.5]{ABAEM}.

Now, let us go back to the general theory, where $A$ is a fixed semiperfect Noetherian $k$-algebra. Using that $A$ is semiperfect, we get that the finitely generated projective $A$-modules satisfy the Krull-Schmidt decomposition theorem and hence, any indecomposable finitely generated projective $A$-module is isomorphic to some $e_iA$. Using this observation with the fact that $A$ is Noetherian gives that if $M \in {\rm mod}A$, then $M$ admits a projective resolution
$$\cdots \to P_r \to P_{r-1} \to \cdots \to P_1 \to P_0 \to M \to 0$$
such that for $i \ge 0$, each $P_i$ is a finite direct sum of copies of the modules $e_1A, \ldots, e_nA$.
Recall that since $A$ is Noetherian, the global dimension of $A$ is well defined and coincides with the left or right global dimension of $A$.  For a right $A$-module $M$, we denote by pd$_A M$ its projective dimension and by id$_A M$ its injective dimension. From \cite{Oso}, one has
$${\rm gl.dim}A = {\rm sup} \{{\rm pd}_A M \mid M \in {\rm mod} A\} = {\rm sup} \{{\rm id}_A M \mid M \in {\rm mod} A\},$$
which will be handy in the sequel. Also, if we know that the global dimension of $A$ is finite, then it coincides with the supremum of the projective dimensions of the simple right $A$-modules, see \cite{RobsonMcConnell}. However, we do not always know in advance that the global dimension is finite. In our setting, using the fact that our algebra is semiperfect, we have a stronger result.
\begin{Prop}
Let $A$ be Noetherian semiperfect. Then
$${\rm gl.dim}A = {\rm max}\left\{{\rm pd}_A \left(\frac{e_iA}{e_i{\rm rad}(A)}\right)\, \Big| \; i=1, \ldots, n\right\}.$$
\end{Prop}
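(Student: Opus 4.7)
The inequality $\max_i {\rm pd}_A(e_iA/e_i{\rm rad}\,A) \le {\rm gl.dim}\,A$ is trivial, since each $e_iA/e_i{\rm rad}\,A$ is a finitely generated right $A$-module. For the reverse, one may assume $d = \max_i {\rm pd}_A(e_iA/e_i{\rm rad}\,A) < \infty$; by the cited theorem of Osofsky, it then suffices to bound ${\rm pd}_A M \le d$ for every $M \in {\rm mod}\,A$.

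The heart of the argument is a $\rm Tor$ computation against the residue module. Given $M \in {\rm mod}\,A$, take a minimal projective resolution $\cdots \to P_1 \to P_0 \to M \to 0$, which exists because $A$ is semiperfect and Noetherian. Minimality means ${\rm im}(P_i \to P_{i-1}) \subseteq P_{i-1}\cdot{\rm rad}\,A$ for all $i \ge 1$, so after tensoring $P_\bullet$ with the left $A$-module $A/{\rm rad}\,A$ the induced differentials vanish and
$${\rm Tor}_i^A(M, A/{\rm rad}\,A) \cong P_i/P_i\cdot{\rm rad}\,A.$$
By Nakayama's lemma this quotient is nonzero exactly when $P_i \ne 0$, giving
$${\rm pd}_A M = \sup\{\, i : {\rm Tor}_i^A(M, A/{\rm rad}\,A) \ne 0 \,\}.$$
Taking the supremum over $M \in {\rm mod}\,A$ and using that over the Noetherian ring $A^{op}$ the flat and projective dimensions of the finitely generated left $A$-module $A/{\rm rad}\,A$ coincide, one obtains
$${\rm gl.dim}\,A = {\rm pd}_{A^{op}}(A/{\rm rad}\,A),$$
which decomposes as $\max_j {\rm pd}_{A^{op}}(Ae_j/({\rm rad}\,A)e_j)$, i.e., the maximum of projective dimensions of the simple \emph{left} $A$-modules.

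To finish, invoke the theorem of Auslander recalled in the introduction: for a Noetherian algebra the left and right global dimensions coincide. Applying the formula of the previous paragraph with $A^{op}$ in place of $A$ (which is again Noetherian and semiperfect) yields ${\rm gl.dim}\,A^{op} = {\rm pd}_A(A/{\rm rad}\,A) = \max_i {\rm pd}_A(e_iA/e_i{\rm rad}\,A)$, and together with ${\rm gl.dim}\,A = {\rm gl.dim}\,A^{op}$ this establishes the proposition.

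The only delicate point is the Tor calculation; once the minimal projective resolution produces ${\rm Tor}_i^A(M, A/{\rm rad}\,A) \cong P_i/P_i\cdot{\rm rad}\,A$, the rest is formal. The left-right swap via Auslander is needed because the Tor formula naturally computes the projective dimension of $A/{\rm rad}\,A$ as a \emph{left} module, while the statement of the proposition concerns simple \emph{right} modules.
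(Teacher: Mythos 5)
Your proof is correct, and its core is the same computation that underlies the result the paper cites, but you re-derive it and therefore take a slightly longer route. The paper's proof simply invokes Theorem 2 of \cite{ChengXu} (for a semi-local ring, the weak global dimension equals the flat dimension of the right module $A/{\rm rad}\,A$), then uses that flat and projective dimensions coincide for finitely generated modules over a Noetherian semiperfect ring, and that weak and ordinary global dimensions agree over a Noetherian ring. Your Tor computation against $A/{\rm rad}\,A$ via a minimal projective resolution is exactly the argument that proves the relevant direction of that cited theorem, so in effect you are reproving the key ingredient from scratch. The one structural difference worth noting: since your minimal-resolution argument resolves \emph{right} $A$-modules $M$, the resulting identity ${\rm pd}_A M = \sup\{i : {\rm Tor}_i^A(M,A/{\rm rad}\,A)\neq 0\}$ naturally yields the flat dimension of $A/{\rm rad}\,A$ as a \emph{left} module, which forces you to pass to $A^{\rm op}$ and invoke Auslander's theorem (left global dimension equals right global dimension) to convert back to the projective dimension of the simple \emph{right} modules. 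The paper avoids this detour because the form of the Cheng--Xu statement it quotes is already about the right module $A/{\rm rad}\,A$. Both proofs are short; yours is self-contained at the cost of the extra left-right swap, while the paper's is a one-line reduction to a reference.
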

\begin{proof}
Since $A$ is semi-local (that is, $A/{\rm rad}(A)$ is semi-simple), we can use Theorem $2$ in \cite{ChengXu}: the weak global dimension of $A$ is the flat dimension of the right $A$-module $A/{\rm rad}(A)$. Since $A/{\rm rad}(A)$ is finitely generated and $A$ is semi-perfect and right Noetherian, the flat dimension of the right $A$-module $A/{\rm rad}(A)$ coincides with its projective dimension. This implies that the weak global dimension of $A$ is the projective dimension of $A/{\rm rad}(A)$. Now, since $A$ is Noetherian, the weak global dimension coincide with the global dimension.
\end{proof}

\medskip

Recall that the \emph{radical} ${\rm rad}(M)$ of a right $A$-module $M$ is the intersection of all its maximal submodules. Since $A/{\rm rad}(A)$ is semi-simple, if $M$ is finitely generated, then ${\rm rad}(M)=M{\rm rad}(A)$. Recall also that a projective resolution
$$\cdots \stackrel{d_2}{\to} P_1 \stackrel{d_1}{\to} P_0 \to M \to 0$$
of $M \in \mmod(A)$ is \emph{minimal} if for $i \ge 1$, $d_i$ is a radical morphism, that is, the image of $d_i$ is contained in the radical of $P_{i-1}$. Every $M \in \mmod(A)$ admits a minimal projective resolution in $\mmod(A)$. Now, we have an exact functor
$$F:= \Hom((1-e)A,-): {\rm mod}A \to {\rm mod}\Gamma$$
between the corresponding categories of finitely generated right modules. Note that we also have a functor $G:=- \otimes_\Gamma (1-e)A : {\rm mod}\Gamma \to {\rm mod}A$ which is left adjoint to $F$. However, this functor is not exact. The reader is referred, for instance, to \cite{Psaroudakis} for a better idea of the canonical functors between the algebras $A, \Gamma$ and $A/A(1-e)A$. In this paper, we shall concentrate on the functor $F$. The following proposition collects some of the properties of the functor $F$.

\begin{Prop} \label{HomologicalProp}
Let $M, P, S \in {\rm mod} A$ with $P$ indecomposable projective and $S$ simple.
\begin{enumerate}[$(1)$]
    \item If $P$ is not isomorphic to a direct summand of $eA$, then $F(P)$ is indecomposable projective.
\item If $S$ is not isomorphic to a direct summand of $S_e$, then $F(S)$ is simple.
\item The functor $F$ is essentially surjective.
\item If $\Ext^i_A(M,S_e) = 0$ for all $i \ge 0$ and $\cdots \to P_1 \to P_0 \to M \to 0$ is a minimal projective resolution of $M$, then $\cdots \to F(P_1) \to F(P_0) \to F(M) \to 0$ is a minimal projective resolution of $F(M)$.
\item $F(S_e)=0$.
\end{enumerate}
\end{Prop}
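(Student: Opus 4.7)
My plan is to identify $F$ explicitly as $F(M) = M(1-e)$ via the natural isomorphism $\Hom_A((1-e)A, M) \cong M(1-e)$ (evaluation at $1-e$), and reduce each part to an idempotent calculation. Since $A$ is basic, every indecomposable projective in $\mmod\, A$ is some $e_jA$ with $e_j$ a primitive idempotent of $A$; for $e_j$ not in the primitive decomposition of $e$, the identity $e_j\Gamma e_j = e_jAe_j$ (using $e_j(1-e) = e_j = (1-e)e_j$) shows $e_j$ remains primitive in $\Gamma$.

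Parts (1), (5), (2) then follow directly. For (1), $F(e_jA) = e_jA(1-e) = e_j\Gamma$, which is indecomposable projective in $\mmod\, \Gamma$. For (5), each primitive summand $e_jA$ of $eA$ has $e_je = e_j$, so $S_{e_j}(1-e) = S_{e_j}e_j(1-e) = 0$. For (2), applying the exact functor $F$ to the short exact sequence $0 \to e_j{\rm rad}(A) \to e_jA \to S_{e_j} \to 0$ and using $e_j(1-e) = e_j$ together with ${\rm rad}(\Gamma) = (1-e){\rm rad}(A)(1-e)$, one obtains $e_j{\rm rad}(A)(1-e) = e_j{\rm rad}(\Gamma)$, and hence $F(S_{e_j}) \cong e_j\Gamma/e_j{\rm rad}(\Gamma)$ is the simple $\Gamma$-module at $e_j$. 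For (3), I would invoke the left adjoint $G = - \otimes_\Gamma (1-e)A$: for any $N \in \mmod\, \Gamma$, the module $G(N)$ lies in $\mmod\, A$ since $(1-e)A$ is finitely generated as a right $A$-module, and the calculation
$$FG(N) = (N \otimes_\Gamma (1-e)A)(1-e) = N \otimes_\Gamma (1-e)A(1-e) = N \otimes_\Gamma \Gamma \cong N$$
shows that $F$ is essentially surjective.

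The substantial step is (4). In any minimal projective resolution, every differential $d_i$ factors through ${\rm rad}(P_{i-1})$, so applying $\Hom_A(-, S_e)$ yields a complex with zero differentials and $\Ext^i_A(M, S_e) \cong \Hom_A(P_i, S_e)$ for all $i \ge 0$. The hypothesis $\Ext^i_A(M, S_e) = 0$ therefore forces each $P_i$ to have no summand isomorphic to any primitive summand of $eA$. By (1), each $F(P_i)$ is projective in $\mmod\, \Gamma$, and exactness of $F$ yields a projective resolution $\cdots \to F(P_1) \to F(P_0) \to F(M) \to 0$. For minimality, I would apply $F$ to the short exact sequence $0 \to {\rm rad}(P_{i-1}) \to P_{i-1} \to P_{i-1}/{\rm rad}(P_{i-1}) \to 0$: by (2), $F$ sends the semi-simple quotient to $\bigoplus_k e_{i_k}\Gamma/e_{i_k}{\rm rad}(\Gamma)$, which is precisely the top $F(P_{i-1})/{\rm rad}(F(P_{i-1}))$ of $F(P_{i-1}) = \bigoplus_k e_{i_k}\Gamma$. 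These two semi-simple quotients of $F(P_{i-1})$ have the same finite length, so they coincide and $F({\rm rad}(P_{i-1})) = {\rm rad}(F(P_{i-1}))$. Since $d_i(P_i) \subseteq {\rm rad}(P_{i-1})$, it follows that $F(d_i)(F(P_i)) \subseteq F({\rm rad}(P_{i-1})) = {\rm rad}(F(P_{i-1}))$, so $F(d_i)$ is a radical morphism. The delicate step is precisely this identification of radicals, which relies on (2) and on the $\Ext$-vanishing hypothesis removing all $eA$-summands from the resolution.
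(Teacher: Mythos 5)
Your proposal is correct and follows essentially the same path as the paper: identify $F$ with multiplication by $1-e$, observe that $F$ and $G$ restrict to quasi-inverse equivalences on the relevant projective subcategories (yielding essential surjectivity and preservation of minimal resolutions), and use the vanishing of $\Ext^i_A(M,S_e)$ together with minimality of the resolution to rule out $eA$-summands. You fill in details the paper dismisses as "easy and well known" (parts (1), (2), (5)) and argue $FG(N)\cong N$ directly rather than via projective presentations, but the underlying ideas are the same.
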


\begin{proof}
Properties $(1),(2),(5)$ are easy and well known. For proving property $(3)$, it suffices to observe that $F$ and $G$ induce quasi-inverse equivalences between add$((1-e)A)$ and add$(\Gamma_\Gamma),$ the additive category generated by $\Gamma$ as a right $\Gamma$-module.
Hence, if $N$ is a finitely generated right $\Gamma$-module with a projective presentation $Q_1 \stackrel{f}{\rightarrow} Q_0 \to N \to 0$, then we get a projective presentation $G(Q_1) \stackrel{G(f)}{\rightarrow} G(Q_0) \to {\rm Coker}(G(f)) \to 0$. Then we see that $F({\rm Coker}(G(f))) \cong N$. Property $(4)$ follows from property $(1)$ by observing that if $g: P \to Q$ is a radical morphism with $P,Q \in {\rm add}((1-e)A)$, then $F(g)$ is a radical morphism.
\end{proof}

Therefore, if $M \in \mmod A$ is such that $\Ext_A^i(M,S_e) =0$ for all $i \ge 0$, then a minimal projective resolution of $F(M)$ is obtained by applying $F$ to a minimal projective resolution of $M$. Hence, in this case, pd$_A M = {\rm pd}_\Gamma F(M)$. In general, a module $M$  in $\mmod A$ needs not satisfy $\Ext_A^i(M,S_e) =0$ for all $i$, and we need to measure this defect. For $M \in \mmod A$, denote by $d_e(M)$ the maximal integer $i$ for which $\Ext^i_A(M,S_e)$ is non-zero (if $\Ext_A^i(M,S_e) =0$ for all $i \ge 0$, we set $d_e(M)=-1$; if $\Ext_A^i(M,S_e) \ne 0$ for infinitely many $i$, we set $d_e(M) = \infty$). The numbers $d_e(M)$ for $M \in \mmod A$ will be very handy in the sequel. Observe that the supremum of the $d_e(M)$ for $M \in {\rm Mod} A$ gives the injective dimension of $S_e$. However, using Baer's criteria, one only needs to take the supremum over the cyclic modules. In particular, we can take the supremum over finitely generated $A$ modules,
$${\rm id}_A S_e = {\rm sup} \{d_e(M) \mid M \in \mmod A\}.$$

\section{Noetherian positively graded $k$-algebras} \label{sectionNoethGraded}

In this section, $A$ is a Noetherian basic $k$-algebra which is \emph{positively graded} and generated in degrees $0$ and $1$. This means that
$$A = A_0 \oplus A_1 \oplus \cdots,$$
as $k$-vector spaces, where $A_0$ is a product of $n$ copies of $k$, and
$A_iA_j = A_{i+j}$ for all $i, j \ge 0$. We do not assume that $A$ is semiperfect. Observe that since $A$ is Noetherian, each $A_i$ should be finite dimensional. Examples of this include Noetherian $k$-algebras of the form $kQ/I$ where $Q$ is a finite quiver and $I$ is an ideal of $kQ$ generated by homogeneous elements of degree at least two.

\medskip

Observe that $1_A = e_1 + \cdots + e_n$ where the $e_i$ are primitive pairwise orthogonal idempotents of degree $0$. We let ${\rm rad}A$ denote the ideal $A_1 \oplus A_2 \oplus \cdots$ of $A$. This ideal does not necessarily coincide with the Jacobson radical of $A$, but we will see that it plays a similar role in the category of graded modules. For this reason, it is call the \emph{graded Jacobson radical} of $A$ and this is why we use that notation ${\rm rad}A$. Let $M$ be a right $A$-module.  One says that $M$ is \emph{graded} if $M$ admits a $k$-vector space decomposition
$$M = \bigopls_{i \in \Z}M_i$$
such that $M_jA_i \subseteq M_{i+j}$. Observe that for an idempotent $f$ in $A_0$, the projective module $fA$ is naturally graded, as
$$fA = fA_0 \oplus fA_1 \oplus fA_2 \oplus \cdots.$$
Let gr$A$ be the category of all finitely generated graded right $A$-modules. A \emph{morphism} $f: M \to N$ in gr$A$ is a morphism of $A$-modules such that $f(M_i) \subseteq f(N_i)$ for all $i$. Given $t \in \Z$ and $M \in {\rm gr}A$, we define $M[t]$ to be the module in gr$A$ such that $M[t]_i = M_{i-t}$. The right $A$-module $M[t]$ is called a \emph{shift} of $M$. The following essential facts about gr$A$ can all be found in \cite{MartinezSolberg}, for instance. In gr$A$, any indecomposable projective module is isomorphic to a shift of some $e_iA$. Given $f: L \to M$ a morphism in ${\rm gr}A$, we have that $f$ lies in the radical of the category gr$A$ if and only if $f(L) \subseteq M{\rm rad}A$. The graded submodule $M{\rm rad}A$ of $M$ is called the \emph{graded radical} of $M$. It coincides with the intersection of all graded submodules of $M$. The \emph{graded top} of $M$ is $M/M{\rm rad}A$. Moreover, every finitely generated module in gr$A$ has a projective cover in gr$A$. Using the Noetherian property, any $M \in {\rm gr}A$ admits a minimal finitely generated projective resolution
$$\cdots \to P_2 \to P_1 \to P_0 \to M \to 0$$
which is graded and such that each term $P_i$ is a finite direct sum of shifts of modules in $\{e_1A, \ldots, e_nA\}$.
For $M,N \in {\rm gr}A$ and $r \ge 0$, by $\Ext^r_A(M,N)$, we mean the $r$-th extension group of $M$ by $N$ in the category $\mmod A$. We have
$$\Ext^r_A(M,N) = \bigopls_{i \in \Z} \Ext_{{\rm gr}A}^r(M,N[i]).$$
In particular, $\Hom_A(M,N)$ coincides with $\bigopls_{i \in \Z} \Hom_{{\rm gr}A}(M,N[i])$.
It is clear that for $M \in {\rm gr}A$, the projective dimension of $M$ in ${\rm gr}A$ coincides with ${\rm pd}_A M$. All these facts mean that the homological algebra in the category gr$A$ looks very similar to the homological algebra in $\mmod(B)$ for a Noetherian semiperfect algebra $B$.

\medskip

We fix $e$ an idempotent of degree $0$ of $A$. As before, we set $\Gamma:= (1-e)A(1-e)$ and we set $S_e = eA/e{\rm rad}A$, which is the semi-simple graded right $A$-module of degree $0$ supported at $e$. The algebra $A$ being positively graded implies that $\Gamma = (1-e)A(1-e)$ is positively graded as well. The grading on $\Gamma$ is given by
$$\Gamma = (1-e)A_0(1-e) \oplus (1-e)A_1(1-e) \oplus \cdots$$
By \cite[Prop. 2.3]{Sando}, $\Gamma$ is again Noetherian and we may assume that the $e_iA$ are pairwise non-isomorphic. Observe that this induced grading does not necessarily implies that $\Gamma$ is generated in degree $0,1$. Indeed, $\Gamma$ is generated in degree $0,1,2$ when $\Ext^1(S_e,S_e)=0$.

Now, the functor $F=\Hom_A((1-e)A, -): \mmod A \to \mmod \Gamma$ induces a functor
$F_{\rm gr} = \Hom_{A}((1-e)A, -): {\rm gr}A \to {\rm gr}\Gamma$
at the level of the graded categories of right-modules. The left adjoint $G = - \otimes_\Gamma (1-e)A$ to $F$ also induces a functor
$G_{\rm gr} = - \otimes_\Gamma (1-e)A: {\rm gr}\Gamma \to {\rm gr}A$
such that if  $M_\Gamma =\bigopls_{i \in \Z} M_i$ is a graded finitely generated right $\Gamma$-module, then $M \otimes_\Gamma (1-e)A$ is a graded finitely generated right $A$-module such that $(M \otimes_\Gamma (1-e)A)_t$ is the $k$-vector space generated by the elements $$\{m_i \otimes_\Gamma (1-e)a_j \mid m_i \in M_i, a_j \in A_j, i+j = t\}.$$ In the proof of Proposition \ref{HomologicalProp}, the key fact was that $F,G$ induce quasi-inverse equivalences between add$((1-e)A)$ and add$(\Gamma_\Gamma)$.  The same is true for $F_{\rm gr}, G_{\rm gr}$ if we consider instead the graded additive categories add$_{\rm gr}((1-e)A)$ and add$_{\rm gr}(\Gamma_\Gamma)$. For simplicity, when there is no risk of confusion, $F_{\rm gr}$ and $G_{\rm gr}$ will simply be denoted by $F$ and $G$, respectively. Hence, Proposition \ref{HomologicalProp} easily extends to the graded case as follows.

\begin{Prop} \label{HomologicalPropGraded}
Let $M, P, S \in {\rm gr} A$ with $P$ indecomposable projective and $S$ simple.
\begin{enumerate}[$(1)$]
    \item If $P$ is not isomorphic to a direct summand of a shift of $eA$, then $F(P)$ is a graded indecomposable projective module.
\item If $S$ is not isomorphic to a direct summand of a shift of $S_e$, then $F(S)$ is a graded simple module.
\item The functor $F$ is essentially surjective.
\item If $\Ext^i_A(M,S_e) = 0$ for all $i \ge 0$ and $\cdots \to P_1 \to P_0 \to M \to 0$ is a minimal graded projective resolution of $M$, then $\cdots \to F(P_1) \to F(P_0) \to F(M) \to 0$ is a minimal graded projective resolution of $F(M)$.
\item $F(S_e)=0$.
\end{enumerate}
\end{Prop}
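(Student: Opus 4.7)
The plan is to proceed exactly in parallel with the proof of Proposition \ref{HomologicalProp}, replacing $\mathrm{add}((1-e)A)$ and $\mathrm{add}(\Gamma_\Gamma)$ by the graded additive categories $\mathrm{add}_{\mathrm{gr}}((1-e)A)$ and $\mathrm{add}_{\mathrm{gr}}(\Gamma_\Gamma)$, and relying on the key observation already highlighted by the author: $F_{\mathrm{gr}}$ and $G_{\mathrm{gr}}$ restrict to quasi-inverse equivalences between these two graded additive categories. With this equivalence in hand, parts $(1), (2), (3), (5)$ are straightforward translations and essentially formal.

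For $(1)$ and $(5)$: every indecomposable projective in $\mathrm{gr}\,A$ is a shift $e_iA[t]$. If $e_i$ is not a summand of $e$, then $F(e_iA[t])=(1-e)e_iA[t]=e_iA[t]$, which is naturally a graded indecomposable projective right $\Gamma$-module by the equivalence above. If instead $e_i$ is a summand of $e$, then $F(e_iA[t])=(1-e)e_iA[t]=0$; in particular, $F(S_e)=(1-e)S_e=0$, giving $(5)$. For $(2)$: if $S$ is not a shift of a direct summand of $S_e$, then $S$ is a shift of some graded simple $S_{e_i}=e_iA/e_i\,\mathrm{rad}\,A$ with $e_i$ not a summand of $e$, and $F(S)=(1-e)S=S$ is graded simple over $\Gamma$ (since any proper graded $\Gamma$-submodule would extend by $G_{\mathrm{gr}}$ to a proper graded $A$-submodule, contradicting the graded simplicity of $S$ as an $A$-module).

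For $(3)$ (essential surjectivity): given a finitely generated graded right $\Gamma$-module $N$, take a graded projective presentation $Q_1\stackrel{f}{\to}Q_0\to N\to 0$ in $\mathrm{gr}\,\Gamma$ (which exists because $\Gamma$ is Noetherian and the category $\mathrm{gr}\,\Gamma$ behaves homologically like the usual module category, as recalled from \cite{MartinezSolberg}). Apply $G_{\mathrm{gr}}$ to obtain a graded presentation $G_{\mathrm{gr}}(Q_1)\stackrel{G_{\mathrm{gr}}(f)}{\to}G_{\mathrm{gr}}(Q_0)\to \mathrm{Coker}(G_{\mathrm{gr}}(f))\to 0$ in $\mathrm{gr}\,A$; then applying the exact functor $F_{\mathrm{gr}}$ and using that the counit $F_{\mathrm{gr}}\,G_{\mathrm{gr}}\Rightarrow \mathrm{id}$ is an isomorphism on $\mathrm{add}_{\mathrm{gr}}(\Gamma_\Gamma)$ (by the equivalence above), we recover $N$.

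For $(4)$, the only point where one needs to be slightly careful in the graded setting is the notion of \emph{minimal} resolution: in $\mathrm{gr}\,A$, a graded morphism $g:P\to Q$ with $P,Q\in \mathrm{add}_{\mathrm{gr}}((1-e)A)$ lies in the graded radical if and only if $g(P)\subseteq Q\cdot \mathrm{rad}\,A$, and this is preserved under $F_{\mathrm{gr}}$ because the equivalence on $\mathrm{add}_{\mathrm{gr}}((1-e)A)$ sends the graded radical ideal to the graded radical ideal of $\mathrm{add}_{\mathrm{gr}}(\Gamma_\Gamma)$ (here $\mathrm{rad}\,\Gamma=(1-e)(\mathrm{rad}\,A)(1-e)$). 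Given the vanishing $\mathrm{Ext}^i_A(M,S_e)=0$ for all $i\ge 0$, a minimal graded projective resolution $\cdots\to P_1\to P_0\to M\to 0$ has each $P_i\in \mathrm{add}_{\mathrm{gr}}((1-e)A)$ (since otherwise the top of $P_i$ would have a summand which is a shift of $S_e$, contradicting the vanishing of the relevant $\mathrm{Ext}$ group). Applying $F_{\mathrm{gr}}$ preserves exactness (since $F_{\mathrm{gr}}$ is exact), preserves projectivity (by $(1)$), and preserves radical morphisms by the remark above; hence $\cdots\to F(P_1)\to F(P_0)\to F(M)\to 0$ is a minimal graded projective resolution of $F(M)$. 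The only mildly subtle point, and thus the main thing to verify cleanly, is this identification of the graded radical under $F_{\mathrm{gr}}$; the rest is formal transport along the equivalence.
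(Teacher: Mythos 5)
Your proposal is correct and follows the same route the paper intends: the paper gives no separate proof of Proposition~\ref{HomologicalPropGraded}, merely noting that since $F_{\mathrm{gr}}, G_{\mathrm{gr}}$ restrict to quasi-inverse equivalences between $\mathrm{add}_{\mathrm{gr}}((1-e)A)$ and $\mathrm{add}_{\mathrm{gr}}(\Gamma_\Gamma)$, the proof of Proposition~\ref{HomologicalProp} transports verbatim, and your write-up supplies exactly those details, including the graded-radical identification needed for $(4)$. One small caution: the parenthetical in your $(2)$ applying $G_{\mathrm{gr}}$ to a proper graded submodule is shaky since $G_{\mathrm{gr}}$ is only right exact and need not preserve monomorphisms, but the conclusion holds anyway because $F(S)\cong S(1-e)$ is one-dimensional over $\Gamma/\mathrm{rad}\,\Gamma$ and hence graded simple.
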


For finding the global dimension of a positively graded algebra, by \cite{Roy}, we only need to look at the projective dimensions of graded simple modules,
that is,
$${\rm gl.dim}A = {\rm max} \left\{{\rm pd}\left(\frac{e_iA}{e_i{\rm rad}A}\right) \Big| i = 1,2, \ldots, n\right\}.$$

Before going further, we need to study filtered modules. For $i \ge 0$, set $F_i = A_0 \oplus \cdots \oplus A_i$, so that we get a chain
$$F_0 \subseteq F_1 \subseteq F_2 \subseteq \cdots$$
of $k$-vector spaces whose union is $A$. We have $F_iF_j \subseteq F_{i+j}$ for all $i,j \ge 0$. Hence, $A$ is a \emph{filtered algebra}. A \emph{filtrated right $A$-module} $M$ is just a right $A$-module $M$ with an ascending chain $$M_0 \subseteq M_1 \subseteq M_2 \subseteq \cdots$$ of $k$-vector spaces whose union is $M$ and such that $M_iF_j \subseteq M_{i+j}$ for all $i,j\ge 0$.
Given any finitely generated (but not necessarily graded) right $A$-module $N$, let $g_1, \ldots, g_t$ be a fixed finite set of generators of $N$. For $i \ge 0$, set $N_i = \sum_j g_j \cdot F_i$, so that we have a chain
$$N_0 \subseteq N_1 \subseteq \cdots$$
of $k$-vectors spaces whose union is $N$. Clearly, $N_iF_j \subseteq N_{i+j}$, so that $N$ becomes a filtered $A$-module. Thus, every finitely generated right $A$-module admits a structure of a filtered module. Given $N \in \mmod A$, there are many choices of filtrations on $N$ that give a filtered module structure. The filtration given above, which depends on the chosen set of generators, is called a \emph{standard filtration} of $N$. Given two filtered $A$-modules $M=\cup_{i \ge 0} M_i$ and $N = \cup_{i \ge 0}N_i$, a \emph{filtered morphism} $f: M \to N$ is a morphism of $A$-modules that satisfies $f(M_i) \subseteq N_i$ for all $i \ge 0$. It is called \emph{strict} if for all $i$, $f(M_i) = f(M)\cap N_i$. We get a category fil$(A)$ whose objects are all the finitely generated filtered right $A$-modules and morphisms are the filtered morphisms.  Let Gr$A$ denote the category of all graded $A$-modules. There is a functor
$${\rm gr}: {\rm fil}(A) \to {\rm Gr}A$$
such that for $M = \cup_{i \ge 0}M_i$ a finitely generated filtered $A$-module, gr$M$ is defined so that $({\rm gr}M)_i = M_i/M_{i-1}$ for $i\ge 1$ and $({\rm gr}M)_0 = M_0$. The structure of gr$M$ as a graded $A$-module is the obvious one. Observe that even if $M=\cup_{i\ge 0}M_i$ is finitely generated, gr$M$ may not be finitely generated. However, if $\{M_i\}_{i \ge 0}$ is a standard filtration of $M$, then gr$M$ is finitely generated. The following, which is a particular case of a result due to Roy (see \cite{Roy}), can be found in \cite[page 258]{RobsonMcConnell}.

\begin{Prop} \label{PropFiltered}Let $M$ be a finitely generated $A$-module, and choose a standard filtration of it in order to build ${\rm gr}M \in {\rm gr}A$. Let
$$(*): \quad Q'_t \stackrel{d_t}{\to} Q'_{t-1} \stackrel{d_{t-1}}{\to} \cdots \stackrel{d_1}{\to} Q'_0 \stackrel{d_0}{\to} {\rm gr}M \to 0$$
be a finitely generated graded free resolution of ${\rm gr}M$.
\begin{enumerate}[$(1)$]
    \item There is an exact sequence
$$(**): \quad Q_t \stackrel{f_t}{\to} Q_{t-1} \stackrel{f_{t-1}}{\to} \cdots \stackrel{f_1}{\to} Q_0 \stackrel{f_0}{\to} M \to 0$$
in ${\rm fil}(A)$ where the $Q_i$ are finitely generated free filtered and all the maps are strict, so that $(**)$ is sent to $(*)$ by ${\rm gr}$.
    \item If ${\rm ker}(d_t)$ is projective in ${\rm gr}A$, then ${\rm ker}(f_t)$ is projective.
\end{enumerate}
\end{Prop}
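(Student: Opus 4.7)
The plan is to construct $(**)$ by inductively lifting $(*)$ one step at a time, using two basic facts about filtered morphisms $f \colon L \to N$ between filtered modules whose filtrations are bounded below: if ${\rm gr}(f)$ is surjective then $f$ itself is surjective and strict; and if $f$ is strict then, equipping $\ker f$ with the induced filtration, one has ${\rm gr}(\ker f) = \ker {\rm gr}(f)$. Both follow by an induction on filtration degree, and the bounded-below hypothesis holds throughout since $M$ carries a standard filtration ($M_j = 0$ for $j < 0$) and every filtration we produce inherits this property.

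For the base case, I would write $Q'_0 = \bigopls_i A[-n_i]$ and let $\bar m_i \in ({\rm gr}\,M)_{n_i}$ be the image under $d_0$ of the canonical generator of $A[-n_i]$. Lift each $\bar m_i$ to some $m_i \in M_{n_i}$, take $Q_0 = \bigopls_i A$ with the free filtered structure $(Q_0)_j = \bigopls_i F_{j-n_i}$, and let $f_0 \colon Q_0 \to M$ send the $i$-th canonical generator to $m_i$; this is filtered with ${\rm gr}(f_0)=d_0$, hence surjective and strict. Inductively, assume strict filtered $f_0, \ldots, f_i$ lifting $d_0, \ldots, d_i$ have been built, with the truncation of $(**)$ in degrees $\le i$ exact and ${\rm im}(f_i) = \ker f_{i-1}$. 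Give $\ker f_i \subseteq Q_i$ the induced filtration; strictness of $f_i$ gives ${\rm gr}(\ker f_i) = \ker d_i$. The same recipe applied to the graded surjection $d_{i+1} \colon Q'_{i+1} \twoheadrightarrow \ker d_i$ and the filtered module $\ker f_i$ then produces a finitely generated free filtered $Q_{i+1}$ together with a strict filtered morphism $f_{i+1} \colon Q_{i+1} \to Q_i$ of image $\ker f_i$ and associated graded $d_{i+1}$, closing the induction and proving (1).

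For (2), the submodule $\ker f_t \subseteq Q_t$ carries a bounded-below induced filtration, and strictness of $f_t$ gives ${\rm gr}(\ker f_t) = \ker d_t$, which is projective in ${\rm gr}A$ by hypothesis. I would fix a graded splitting $s \colon \ker d_t \to P'$ of a graded free cover $g' \colon P' \twoheadrightarrow \ker d_t$, use the base-case construction to obtain a filtered free $P$ and a strict surjection $g \colon P \twoheadrightarrow \ker f_t$ with ${\rm gr}(g)=g'$, and then lift $s$ to a filtered morphism $\sigma \colon \ker f_t \to P$ with ${\rm gr}(\sigma)=s$ by lifting its action on a standard set of generators to elements of the correct filtration degree. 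The filtered endomorphism $g\circ\sigma$ has associated graded the identity, hence is a filtered isomorphism by another application of the bounded-below lemma; so $\ker f_t$ is a filtered direct summand of $P$, and in particular projective as an $A$-module.

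The step I expect to be the main obstacle is the filtered-to-graded dictionary underpinning the whole induction: verifying carefully that (strict) surjectivity transfers between the filtered and graded settings under the bounded-below assumption, and that the successive kernels, equipped with their induced filtrations, have the expected associated graded. Once these translation lemmas are in place, both (1) and (2) reduce to iterated applications of one and the same lifting procedure, and the fact that $A$ itself has $A_0$ sitting in filtration degree $0$ means no pathology occurs at the bottom of any filtration.
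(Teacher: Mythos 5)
The paper offers no proof of this proposition; it attributes it to Roy and refers to \cite[page 258]{RobsonMcConnell}, so there is no internal argument to compare against. Your part (1) is the standard lifting argument from those sources and is correct as sketched: the two translation lemmas you isolate (for exhaustive, bounded-below filtrations, ${\rm gr}$-surjective implies surjective and strict, and ${\rm gr}$ of the kernel of a strict map is the kernel of ${\rm gr}$ of that map) are exactly what one needs, and the degree-by-degree lift using shifted free filtered modules, with each kernel carrying the induced (again bounded-below) filtration, closes the induction cleanly.

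Part (2), however, has a genuine gap. You attempt to construct $\sigma \colon \ker f_t \to P$ with ${\rm gr}(\sigma)=s$ ``by lifting its action on a standard set of generators to elements of the correct filtration degree.'' But $\ker f_t$ is not free on such a generating set, so prescribing images of generators does not define an $A$-linear map: nothing guarantees that your chosen images in $P$ satisfy the same $A$-linear relations that the generators of $\ker f_t$ do, and in general a lift $\sigma$ with ${\rm gr}(\sigma)=s$ simply need not exist. The correct move is to lift in the direction where the source is genuinely free. Set $K=\ker g$, so $0 \to K \xrightarrow{\iota} P \xrightarrow{g} \ker f_t \to 0$ is strict exact, and ${\rm gr}$ gives $0 \to {\rm gr}(K) \to P' \xrightarrow{g'} \ker d_t \to 0$. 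Projectivity of $\ker d_t$ splits this graded sequence, yielding a graded retraction $r' \colon P' \to {\rm gr}(K)$ of ${\rm gr}(\iota)$. Since $P$ is free on a filtered basis, one can lift $r'$ to a filtered $\rho \colon P \to K$ by choosing $\rho$ on that basis; then ${\rm gr}(\rho\iota)={\rm id}_{{\rm gr}(K)}$, so $\rho\iota$ is an automorphism of $K$ by your bounded-below lemma, $(\rho\iota)^{-1}\rho$ retracts $\iota$, the sequence splits, and $\ker f_t$ is a direct summand of the free module $P$. (Equivalently, choose graded $C'$ with $\ker d_t \oplus C'$ graded free, equip $C'$ with the filtration $C_j=\bigoplus_{i\le j}C'_i$, and observe ${\rm gr}(\ker f_t \oplus C)$ is graded free, hence $\ker f_t \oplus C$ is free by the easier ``free ${\rm gr}$ implies free'' transfer.)
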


For a graded $A$-module $M$, one defines $d_e(M)$ in a similar way: $d_e(M)$ is the maximal integer $i$ for which $\Ext^i_A(M,S_e)$ is non-zero (if $\Ext_A^i(M,S_e) =0$ for all $i \ge 0$, we set $d_e(M)=-1$; if $\Ext_A^i(M,S_e) \ne 0$ for infinitely many $i$, we set $d_e(M) = \infty$). We first have to make sure that the supremum of the $d_e(M)$ where $M \in {\rm gr}A$ gives the injective dimension of $S_e$.

\begin{Lemma}
We have ${\rm sup}\{d_e(M) \mid M \in {\rm gr}A\} = {\rm id}_A S_e$.
\end{Lemma}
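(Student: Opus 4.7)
The plan is to use a minimality argument together with the filtered lifting of resolutions from Proposition \ref{PropFiltered}. By Baer's criterion, as recalled in Section 2, we have ${\rm id}_A S_e = \sup\{d_e(M) \mid M \in \mmod A\}$; since every $N \in {\rm gr}A$ is a finitely generated right $A$-module, the inequality $\sup\{d_e(N) \mid N \in {\rm gr}A\} \leq {\rm id}_A S_e$ is immediate. The real content is the reverse inequality, for which it suffices to produce, for each $M \in \mmod A$, a graded module $N$ with $d_e(N) \geq d_e(M)$.

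Given $M \in \mmod A$, I would endow it with a standard filtration, form $N := {\rm gr}M \in {\rm gr}A$, and take a minimal graded projective resolution
\[ \cdots \to P'_i \to P'_{i-1} \to \cdots \to P'_0 \to {\rm gr}M \to 0 \]
in ${\rm gr}A$. Its differentials are graded radical morphisms, hence have matrix entries lying in ${\rm rad}A = A_1 \oplus A_2 \oplus \cdots$. The key observation is that $S_e = eA/e{\rm rad}A$ is annihilated on the right by ${\rm rad}A$, so applying $\Hom_A(-, S_e)$ collapses every differential to zero. Therefore $\Ext^i_A({\rm gr}M, S_e) \cong \Hom_A(P'_i, S_e) \cong S_e^{m_i}$, where $m_i$ is the number of summands of $P'_i$ isomorphic to a shift of $eA$; in particular $d_e({\rm gr}M)$ is the largest $i$ with $m_i > 0$.

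Next I would invoke the projective analogue of Proposition \ref{PropFiltered} to lift $P'_\bullet$ to a filtered projective resolution $P_\bullet \to M$ in ${\rm fil}(A)$, where each $P_i$ matches $P'_i$ as an ungraded $A$-module (filtered projective modules and their associated graded always agree as ungraded modules). In particular $\Hom_A(P_i, S_e) \cong S_e^{m_i}$, except that the ungraded differentials of $P_\bullet$ may now carry lower-order filtered components in $A_0$ which need not annihilate $S_e$. Forgetting the filtration, $P_\bullet$ is a (possibly non-minimal) projective resolution of $M$ in $\mmod A$, so $\Ext^i_A(M, S_e)$ is a subquotient of $\Hom_A(P_i, S_e) = S_e^{m_i}$. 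This yields $\dim_k \Ext^i_A(M, S_e) \leq m_i = \dim_k \Ext^i_A({\rm gr}M, S_e)$, hence $d_e(M) \leq d_e({\rm gr}M)$, completing the argument.

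The main technical hurdle is that Proposition \ref{PropFiltered} is stated only for graded free resolutions, whereas the argument above requires its analogue for minimal graded projective resolutions so that the multiplicity $m_i$ is sharp; attempting a detour through an unpadded free resolution would inflate the count and destroy the desired inequality. This extension is routine, following the same Nakayama-type lifting: one picks homogeneous generators of each projective summand of $P'_i$, lifts them to elements of $M$ in the correct filtration degree, and verifies strictness and surjectivity of the resulting filtered map inductively.
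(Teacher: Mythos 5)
Your approach is correct but genuinely different from the paper's. The paper argues by contradiction: it picks a finitely generated (ungraded) $M$ witnessing a higher $\Ext$ degree, applies Proposition~\ref{PropFiltered} to a graded \emph{free} resolution of ${\rm gr}M$, extracts a strict epimorphism $f\colon Q_r\to S_e$ representing the nonvanishing class, and then runs an explicit factorization argument, passing between the filtered and graded levels via the lifting results \cite[Chap.~7, Cor.~6.14 and Prop.~6.15]{RobsonMcConnell} to contradict the minimality of the non-factoring $f$. Your proof replaces this by a clean quantitative bound: since the differentials in a \emph{minimal} graded projective resolution are radical and $S_e\cdot{\rm rad}A=0$, you compute $\Ext^i_A({\rm gr}M,S_e)$ exactly as the multiplicity $m_i$ of shifts of $eA$ in the $i$-th term, and then, forgetting filtration on a lifted filtered resolution $P_\bullet\to M$, you deduce $\dim_k\Ext^i_A(M,S_e)\le m_i$, hence $d_e(M)\le d_e({\rm gr}M)$. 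What this buys is a sharper term-by-term dimension inequality in place of a contradiction argument; what it costs is that you need the filtered lifting to hold for minimal graded \emph{projective} (not free) resolutions. You correctly flag this as the technical hurdle and sketch the Nakayama-style induction, but note that this extension is not literally covered by Proposition~\ref{PropFiltered} as stated or by the cited results, and it does require verifying that each lift $P_0\to M$ is strict and surjective and that the induced filtration on the kernel has finitely generated associated graded before iterating; in contrast, the paper stays in the free setting where the cited machinery applies verbatim. One small point: you should reduce to $e$ primitive at the outset (as the paper does) so that $\Hom_A(eA,S_e)\cong k$; otherwise the multiplicity count $m_i$ needs to be weighted by the ranks $\dim_k\Hom_A(e_jA,S_e)$, though the dimension inequality still holds.
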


\begin{proof}
We may restrict to the case where $e$ is primitive. First, we have ${\rm sup}\{d_e(M) \mid M \in {\rm gr}A\} \le {\rm id}_A S_e$. If the inequality is strict, then there exists a finitely generated $A$-module $M$, which is not graded, such that $\Ext^r_A(M,S_e) \ne 0$, where $r > {\rm sup}\{d_e(M) \mid M \in {\rm gr}A\}$. Now, consider a standard filtration for $M$ with a finitely generated graded free resolution
$$(*): \quad Q'_{r+1} \stackrel{d_{r+1}}{\to} Q'_r \stackrel{d_r}{\to} \cdots \to Q'_0 \to {\rm gr}M \to 0$$
of ${\rm gr}M$.
Since ${\rm gr}M$ is finitely generated graded, $\Ext^r_A({\rm gr}M, S_e)=0$.
By Proposition \ref{PropFiltered}, there is an exact sequence
$$(**): \quad Q_{r+1} \stackrel{f_{r+1}}{\to} Q_r \stackrel{f_r}{\to} Q_{r-1} \stackrel{f_{r-1}}{\to} \cdots \stackrel{f_1}{\to} Q_0 \stackrel{f_0}{\to} M \to 0$$
in ${\rm fil}(A)$ where the $Q_i$ are finitely generated free filtered and all the maps are strict, so that $(**)$ is sent to $(*)$ by ${\rm gr}$. Since $\Ext^r_A(M,S_e) \ne 0$, there exists a morphism $f: Q_r \to S_e$ such that $ff_{r+1} = 0$ and $f$ does not factor through $f_r$. Since $S_e$ is one dimensional, the morphism $f: Q_r \to S_e$ gives rise to a strict filtered epimorphism, also denoted by $f$.
By applying the functor gr, we get $d_{r+1}{\rm gr}(f)=0$ where ${\rm gr}(f): Q'_r \to S_e[t]$ for some $t$. Since $\Ext^r_A({\rm gr}M, S_e)=0$, there exists $g': Q'_{r-1} \to S_e[t]$ such that $g'd_r = {\rm gr}(f)$. Now from \cite[Chap. 7, Prop. 6.15]{RobsonMcConnell}, there exists a strict filtered morphism $g: Q_{r-1} \to S_e$ such that gr$(g) = g'$. Hence, we get gr$(f-gf_r)={\rm gr}(f)-g'd_{r}=0$. If $f-gf_r$ is non-zero, then it is a strict epimorphism from $Q_r$ to $S_e$, and hence, by \cite[Chap. 7, Cor. 6.14]{RobsonMcConnell}, gr$(f-gf_r)$ is an epimorphism, a contradiction. This shows that $f-gf_r=0$, a contradiction.
\end{proof}

The above lemma actually tells us that the injective dimension of $S_e$ in gr$A$ (or Gr$A$, the category of not necessarily finitely generated graded modules) coincides with the injective dimension of $S_e$ in $\mmod(A)$ (or Mod$(A)$). Note that the corresponding result is not true, in general, for an arbitrary object $M$ in gr$A$.

\section{Homological dimensions} \label{sectionHomDim}

In this section, the algebras considered are Noetherian and they are either semiperfect or positively graded. Sometimes, we restrict to the Artinian case, that is, the finite dimensional case. As a first step to our investigation, we want to relate the homological dimensions of $A$-modules with those of the $\Gamma$-modules. More precisely, we want to relate the global dimensions of $A$ and $\Gamma$ with the homological properties of the semi-simple module $S_e$. Later in this section, we will assume that $e$ is primitive, but at this stage, $e$ is any idempotent of $A$, which is of degree $0$ if $A$ is positively graded.
To unify the notations, we denote by $\C(A)$ and $\C(\Gamma)$ the following categories. If $A$ is semiperfect, $\C(A) = \mmod A$ and $\C(\Gamma) = \mmod \Gamma$ and if $A$ is positively graded, $\C(A) = {\rm gr}A$ and $\C(\Gamma) = {\rm gr}\Gamma$. Recall that we have a functor $F: \C(A) \to \C(\Gamma)$ having nice properties, see Propositions \ref{HomologicalProp} and \ref{HomologicalPropGraded}. We start by relating the projective dimension of an object $M$ in $\C(A)$ to that of $F(M)$ in $\C(\Gamma)$.
We say that $M \in \C(A)$ is \emph{self-orthogonal} if we have $\Ext^i_A(M,M)=0$ for all positive integers $i$.
The following lemma is an analogue to the well known Horseshoe Lemma and will be very handy in the sequel.

\begin{Lemma} \label{ConstructionProjResol}
Let $0 \to L \to M \to N \to 0$ be a short exact sequence in $\C(A)$. Let $$\cdots \to P_1 \to P_0 \to L \to 0$$ and  $$\cdots \to Q_1 \to Q_0 \to M \to 0$$ be projective resolutions of $L$ and $M$ in $\C(A)$, respectively.  Then there exists a projective resolution
$$ \cdots \to P_1 \oplus Q_2 \to P_0 \oplus Q_1 \to Q_0\to N \to 0$$
of $N$ in $\C(A)$.
\end{Lemma}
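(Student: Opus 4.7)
The statement is not literally the Horseshoe Lemma but a cousin: we are given resolutions of the sub and the middle term of a short exact sequence and asked to build one for the quotient. The plan is to manufacture a short exact sequence $0 \to K \to Q_0 \to N \to 0$ in $\C(A)$, identify $K$ via a ``sandwich'' short exact sequence that involves $L$ and the first syzygy of $M$, and then invoke the usual Horseshoe Lemma (valid in any abelian category with enough projectives, hence in both $\mmod A$ and ${\rm gr}A$) to resolve $K$.

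\textbf{Step 1: Build an epimorphism $Q_0 \twoheadrightarrow N$.} Compose the augmentation $Q_0 \twoheadrightarrow M$ of the given resolution of $M$ with the surjection $M \twoheadrightarrow N$ coming from the short exact sequence. Since both maps are epimorphisms in $\C(A)$, so is the composite. Let $K$ denote its kernel, giving a short exact sequence
$$0 \to K \to Q_0 \to N \to 0.$$

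\textbf{Step 2: Fit $K$ between $\Omega M$ and $L$.} Write $\Omega M := \ker(Q_0 \twoheadrightarrow M)$. Since $\Omega M$ lies in the kernel of $Q_0 \to N$ (it already dies in $M$), we have an inclusion $\Omega M \hookrightarrow K$. The induced map $K \to M$ has image exactly $L$, and its kernel is $\Omega M$, so a short diagram chase (or snake lemma applied to the obvious $2 \times 3$ diagram with rows $0 \to \Omega M \to Q_0 \to M \to 0$ and $0 \to K \to Q_0 \to N \to 0$) yields the short exact sequence
$$0 \to \Omega M \to K \to L \to 0.$$

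\textbf{Step 3: Apply the Horseshoe Lemma to Step 2.} By truncating the given resolution of $M$, we obtain a projective resolution
$$\cdots \to Q_3 \to Q_2 \to Q_1 \to \Omega M \to 0$$
of $\Omega M$ in $\C(A)$. Combined with the given resolution $\cdots \to P_1 \to P_0 \to L \to 0$ of $L$, the Horseshoe Lemma applied to the sequence $0 \to \Omega M \to K \to L \to 0$ produces a projective resolution of $K$ whose degree-$i$ term is $Q_{i+1} \oplus P_i$:
$$\cdots \to Q_3 \oplus P_2 \to Q_2 \oplus P_1 \to Q_1 \oplus P_0 \to K \to 0.$$

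\textbf{Step 4: Splice.} Splicing this resolution of $K$ with the short exact sequence $0 \to K \to Q_0 \to N \to 0$ from Step 1 gives an exact sequence
$$\cdots \to P_1 \oplus Q_2 \to P_0 \oplus Q_1 \to Q_0 \to N \to 0$$
in $\C(A)$, exactly as claimed. The only subtlety is bookkeeping with the indices and checking that the construction lives in $\C(A)$: in the graded setting one needs the syzygies and Horseshoe maps to respect the grading, but this is automatic since truncations, kernels, and the horseshoe construction are all performed inside the graded category ${\rm gr}\,A$. No real obstacle arises; the argument is a clean application of the Horseshoe Lemma after isolating $K$ in Step 2.
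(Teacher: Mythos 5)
Your proof is correct, and it takes a genuinely different route from the paper's. The paper proves the lemma in one stroke using the mapping cone in the derived category: replacing $L$ and $M$ by their projective resolutions, the cone of the induced chain map $P_\bullet \to Q_\bullet$ is a complex of projectives with terms $P_{i-1}\oplus Q_i$, and it is quasi-isomorphic to $N$ because the triangle $L \to M \to N \to L[1]$ identifies $N$ with that cone; hence the cone is a projective resolution of $N$. Your argument instead stays entirely inside the abelian category: you isolate $K = \ker(Q_0 \twoheadrightarrow N)$, observe the sandwich $0 \to \Omega M \to K \to L \to 0$, resolve $K$ by the Horseshoe Lemma using the shifted truncation of the $Q$-resolution and the given $P$-resolution, and splice. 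Both constructions produce literally the same graded pieces $P_{i-1}\oplus Q_i$, and both are valid in $\mmod A$ and in ${\rm gr}A$ (your remark about grading is handled correctly, since kernels, truncations, and the Horseshoe lifts all live in ${\rm gr}A$). What each approach buys: the paper's cone argument is shorter and makes it transparent \emph{why} the answer has the shape of a cone, at the cost of invoking the derived-category formalism (and, strictly speaking, $D^b$ is a slight abuse when the resolutions are unbounded, though the underlying chain-level statement is unaffected); your Horseshoe-plus-splice argument is more elementary and self-contained, requiring only the standard Horseshoe Lemma in an abelian category with enough projectives, and it makes the exactness checks completely explicit. Your Step 2 diagram chase is correct: $\Omega M \subseteq K$ since it dies in $M$ and hence in $N$, the image of $K$ in $M$ is exactly $L$ because $K$ is the full preimage of $L$ under the epimorphism $Q_0 \to M$, and the kernel of $K \twoheadrightarrow L$ is $K \cap \Omega M = \Omega M$.
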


\begin{proof}
Observe that in the bounded derived category of ${\rm mod} A$, one can replace $L$ and $M$ by their respective projective resolutions, which are in $\C(A)$.  The short exact sequence given gives rise to a triangle
$$L \to M \to N \to L[1].$$
Hence $N$ is quasi-isomorphic to the cone of the morphism $L \to M$. Moreover, this cone is a complex of graded modules if $A$ is positively graded. The result is clear from this: the cone obtained has zero cohomology in all degrees but zero (where it is isomorphic to $N$), since it is quasi-isomorphic to $N$. Hence, the cone is a projective resolution of $N$.
\end{proof}

The following is essential.

\begin{Prop} \label{firstprop}
Let $M$ be in $\C(A)$. Then
$${\rm pd}_{\Gamma}F(M) \le {\rm max}(d_e(M)+{\rm pd}_\Gamma F(eA),{\rm pd}_A M).$$ Moreover, ${\rm pd}_{\Gamma}F(M) = {\rm pd}_A M$ whenever $d_e(M)+{\rm pd}_\Gamma F(eA) < {\rm pd}_A M-1$ or $d_e(M)=-1$.
\end{Prop}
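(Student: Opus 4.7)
The plan is to apply the exact functor $F$ to a minimal projective resolution of $M$ in $\C(A)$, identify (via Proposition \ref{HomologicalProp}(1)) which summands of each term become projective in $\C(\Gamma)$, and then iterate Lemma \ref{ConstructionProjResol} to splice in a projective resolution of the remaining non-projective $F(eA)$-summands.

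Let $r = {\rm pd}_A M$ (the inequality is vacuous when $r = \infty$), $n = d_e(M)$, and $c = {\rm pd}_\Gamma F(eA)$. Take a minimal projective resolution $0 \to P_r \to \cdots \to P_0 \to M \to 0$ in $\C(A)$ and decompose each $P_i = P_i' \oplus (eA)^{a_i}$ with $P_i' \in {\rm add}((1-e)A)$. By minimality every differential in the complex $\Hom_A(P_\bullet, S_e)$ factors through the radical and so vanishes on $S_e$; hence $\Ext^i_A(M,S_e) \cong \Hom_A(P_i, S_e) \cong \Hom_A((eA)^{a_i}, S_e) \cong S_e^{a_i}$, so that $n = \max\{i : a_i > 0\}$ (with the convention $n = -1$ when every $a_i$ is zero). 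Applying $F$ yields the exact sequence $0 \to F(P_r) \to \cdots \to F(P_0) \to F(M) \to 0$ whose terms satisfy ${\rm pd}_\Gamma F(P_i) \le c$ if $a_i > 0$ and ${\rm pd}_\Gamma F(P_i) = 0$ otherwise, by Proposition \ref{HomologicalProp}(1). Letting $M_i$ denote the image of $F(P_i) \to F(P_{i-1})$ and iterating Lemma \ref{ConstructionProjResol} on the short exact sequences $0 \to M_{i+1} \to F(P_i) \to M_i \to 0$ (starting from $M_r = F(P_r)$) yields ${\rm pd}_\Gamma F(M) \le \max_i ({\rm pd}_\Gamma F(P_i) + i) = \max(c + n, r)$, which is the first inequality.

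For the equality statement, if $d_e(M) = -1$ then every $a_i$ vanishes, and Proposition \ref{HomologicalProp}(4) shows that $F$ sends the minimal projective resolution of $M$ to a minimal projective resolution of $F(M)$, giving ${\rm pd}_\Gamma F(M) = r$. Otherwise, assume $n + c < r - 1$ and proceed by induction on $r$; the base $r = 0$ would force $n + c < -1$, impossible unless $n = -1$, already handled. For $r \ge 1$, the syzygy $N = \ker(P_0 \to M)$ has ${\rm pd}_A N = r - 1$, and because its minimal projective resolution is the tail $\cdots \to P_2 \to P_1 \to N \to 0$ of that of $M$, one reads off $d_e(N) = n - 1$ when $n \ge 1$ and $d_e(N) = -1$ when $n \le 0$; in either sub-case either $d_e(N) + c < (r-1) - 1$ or $d_e(N) = -1$, and the inductive hypothesis gives ${\rm pd}_\Gamma F(N) = r - 1$. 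Since ${\rm pd}_\Gamma F(P_0) \le c \le r - 2 < r - 1 = {\rm pd}_\Gamma F(N)$, the short exact sequence $0 \to F(N) \to F(P_0) \to F(M) \to 0$ forces ${\rm pd}_\Gamma F(M) = {\rm pd}_\Gamma F(N) + 1 = r$. The main subtlety is the combinatorial identification $n = \max\{i : a_i > 0\}$ and its correct propagation to the syzygy, both of which depend essentially on the minimality of the projective resolution.
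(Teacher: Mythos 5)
Your argument is correct and follows essentially the same route as the paper: apply the exact functor $F$ to a minimal projective resolution of $M$, split each term into its ${\rm add}(eA)$-part and its ${\rm add}((1-e)A)$-part using minimality to read off $d_e(M)$ from which positions carry $eA$-summands, iterate Lemma~\ref{ConstructionProjResol} for the bound, and then show the resolution stays minimal once one passes a syzygy beyond $d_e(M)$. The only organizational difference in the equality part is that the paper jumps directly to the $(d_e(M)+1)$-syzygy (whose minimal resolution has no $eA$-summands, so $F$ preserves it) and then splices back up through the $d_e(M)$-syzygy, whereas you induct on ${\rm pd}_A M$ via the first syzygy, tracking how $d_e$ drops; the two are equivalent in substance and length. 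Two small points of precision: the proposition allows $e$ to be any (non-primitive) idempotent, so the $eA$-part of $P_i$ should be written as an arbitrary object of ${\rm add}(eA)$ (or a direct sum of shifts thereof in the graded case) rather than $(eA)^{a_i}$, and $\Hom_A(eA,S_e)$ is more accurately $S_e e$ rather than $S_e$ itself; neither affects the nonvanishing statement $\Ext^i_A(M,S_e)\neq 0 \iff T_i\neq 0$ that your argument actually uses.
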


\begin{proof}
If $d_e(M)=-1$, then $\Ext^i_A(M,S_e)=0$ for $i \ge 0$ and we have already observed that ${\rm pd}_{\Gamma}F(M) = {\rm pd}_A M$. We may assume that pd$_A M = r < \infty$ and pd$_{\Gamma}F(eA) = s < \infty$. Then $0 \le d_e(M) \le r$. Let $$\mathcal{P}_M: \quad 0 \to P_r\to \cdots \to P_1 \to P_0\to M \to 0$$ be a minimal projective resolution of $M$ in $\C(A)$, and suppose that for $0 \le i \le r$, we have that $P_i \cong T_i\oplus Q_i$ where $Q_i$ has no direct summand isomorphic to (a shift of) a direct summand of $eA$. Then $d_e(M)$ is the maximal $i \ge 0$ such that $T_i \ne 0$. Note that $F(Q_i)$ is a projective object in $\C(\Gamma)$ for all $i$. Let
$$\mathcal{P}_{F(eA)}: \quad 0 \to R_s\to \cdots \to R_1 \to R_0 \to F(eA) \to 0$$ be a minimal projective resolution of $F(eA)$ in $\C(\Gamma)$.
By using Lemma \ref{ConstructionProjResol} many times, we see that we can get a projective resolution of $F(M)$ in $\C(\Gamma)$ of length at most max$(d_e(M)+s,r)$ using the $F(Q_i)$ and summands of the $R_j$ as terms.

Suppose now that $d_e(M) + s < r-1$. Let $N$ be the $(d_e(M)+1)$-syzygy of $M$ in $\C(A)$.  Then pd$_A N = r-d_e(M)-1 > s$ and the minimal projective resolution of $N$ in $\C(A)$ does not contain (a shift of) a direct summand of $eA$ as a summand.  Therefore, applying $F$ to it, we get a minimal projective resolution of $F(N)$ in $\C(\Gamma)$ of length $r-d_e(M)-1$ and hence, pd$_\Gamma F(N) = r-d_e(M)-1$. Consider now the short exact sequence
$$0 \to F(N) \to F(Q_{d_e(M)}) \oplus F(T_{d_e(M)}) \to F(L) \to 0,$$
in $\C(\Gamma)$ where $L$ is the $d_e(M)$-syzygy of $M$. By Lemma \ref{ConstructionProjResol}, we get a projective resolution of $F(L)$ in $\C(\Gamma)$ of length $r-d_e(M)$ where the last map is the last map of the minimal projective resolution of $F(N)$, since pd$_\Gamma F(N) > {\rm pd}_\Gamma F(Q_{d_e(M)}) \oplus F(T_{d_e(M)})$. Hence, this projective resolution of $F(L)$ is of minimal length and pd$_\Gamma F(L) = r-d_e(M) > s$. By induction, we get pd$_\Gamma F(M)=r$.
\end{proof}

As already observed, the supremum of $\{d_e(M) \mid M \in \C(A)\}$ is the injective dimension of $S_e$ in $\mmod A$.  Therefore, we have the following as a consequence, compare with \cite[Cor. 8.1 (viii)]{Psaroudakis}.

\begin{Cor} \label{firstcoro}
We have ${\rm gl.dim} \Gamma \le  {\rm max}({\rm id}_A S_e + {\rm pd}_{\Gamma}F(eA),{\rm gl.dim} A)$, and we have ${\rm gl.dim} \Gamma = {\rm gl.dim} A$ if ${\rm id}_A S_e+{\rm pd}_{\Gamma}F(eA) < {\rm gl.dim} A -1$.
\end{Cor}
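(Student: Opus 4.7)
The plan is to derive Corollary \ref{firstcoro} directly from Proposition \ref{firstprop} by taking suprema over $M \in \C(A)$, combined with the essential surjectivity of $F: \C(A)\to \C(\Gamma)$ (Propositions \ref{HomologicalProp}(3) and \ref{HomologicalPropGraded}(3)). I first reduce the computation of global dimension to the subcategories $\C(A)$ and $\C(\Gamma)$: by the cited results of Oso in the semiperfect setting and of Roy in the positively graded setting,
$${\rm gl.dim}\,A = \sup\{{\rm pd}_A M \mid M \in \C(A)\} \quad \text{and} \quad {\rm gl.dim}\,\Gamma = \sup\{{\rm pd}_\Gamma N \mid N \in \C(\Gamma)\}.$$
Combined with essential surjectivity of $F$, the latter becomes $\sup\{{\rm pd}_\Gamma F(M) \mid M \in \C(A)\}$. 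Applying Proposition \ref{firstprop} to each such $M$ yields ${\rm pd}_\Gamma F(M) \le \max(d_e(M) + {\rm pd}_\Gamma F(eA), {\rm pd}_A M)$, and taking the supremum, together with the identity $\sup_{M \in \C(A)} d_e(M) = {\rm id}_A S_e$ (recorded for the semiperfect case just after Proposition \ref{HomologicalProp} and proved for the graded case in the lemma closing Section \ref{sectionNoethGraded}), gives the first inequality.

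For the second assertion, assume ${\rm id}_A S_e + {\rm pd}_\Gamma F(eA) < {\rm gl.dim}\,A - 1$. The first inequality already forces ${\rm gl.dim}\,\Gamma \le {\rm gl.dim}\,A$, so it remains to establish the reverse. I would pick $M \in \C(A)$ whose projective dimension is equal to ${\rm gl.dim}\,A$ when the latter is finite (such an $M$ exists as a simple top $e_iA/e_i{\rm rad}A$ realizing the maximum in the propositions at the end of Sections~2 and~3), or with ${\rm pd}_A M$ arbitrarily large when ${\rm gl.dim}\,A=\infty$ (the hypothesis being vacuous unless ${\rm id}_A S_e$ and ${\rm pd}_\Gamma F(eA)$ are both finite, which is what we use). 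Since $d_e(M) \le {\rm id}_A S_e$, the hypothesis gives $d_e(M) + {\rm pd}_\Gamma F(eA) < {\rm pd}_A M - 1$, and the ``moreover'' clause of Proposition \ref{firstprop} then produces ${\rm pd}_\Gamma F(M) = {\rm pd}_A M$; passing to the supremum yields ${\rm gl.dim}\,\Gamma \ge {\rm gl.dim}\,A$.

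Since the corollary is essentially the packaging of Proposition \ref{firstprop} with the definition of ${\rm id}_A S_e$ and the essential surjectivity of $F$, no serious obstacle arises. The only technical point worth checking is that the relevant suprema can be taken inside the subcategories $\C(A)$ and $\C(\Gamma)$ rather than over all modules, which is exactly where the Noetherian hypothesis is used, via the cited results of Oso and Roy.
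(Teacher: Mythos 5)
Your proposal is correct and follows essentially the same route as the paper's proof: reduce global dimensions to suprema over $\C(A)$ and $\C(\Gamma)$, use essential surjectivity of $F$ to rewrite the $\Gamma$-side supremum, apply Proposition \ref{firstprop} together with $\sup_{M\in\C(A)} d_e(M) = {\rm id}_A S_e$ for the first inequality, and invoke the "moreover" clause of Proposition \ref{firstprop} on a module $M$ attaining (or approaching) ${\rm gl.dim}\,A$ for the equality. The one small difference is that you explicitly treat the case ${\rm gl.dim}\,A = \infty$ by taking modules of arbitrarily large projective dimension, whereas the paper implicitly takes $M$ with ${\rm pd}_A M = {\rm gl.dim}\,A$; your version is slightly more careful but the argument is the same in substance.
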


\begin{proof} Observe first that the functor $F$ is essentially surjective by Propositions \ref{HomologicalProp} and \ref{HomologicalPropGraded}. Moreover, the global dimension of an algebra can be reduced to taking the supremum of the projective dimension of the simple objects in $\C(A)$. Thus, $${\rm sup} \{{\rm pd}_\Gamma F(M) \mid M \in \C(A)\} = {\rm gl.dim} \Gamma.$$ Therefore, by Proposition \ref{firstprop}, we have $${\rm gl.dim} \Gamma \le  {\rm max}({\rm sup}\{d_e(M) \mid M \in \C(A)\} + {\rm pd}_{\Gamma}F(eA),{\rm sup}\{{\rm pd}_A(M) \mid M \in \C(A)\}),$$
and this yields the first part of the statement. For the second part, suppose ${\rm id}_A S_e+{\rm pd}_{\Gamma}F(eA) < {\rm gl.dim} A -1$. Let $M$ be an object in $\C(A)$ such that pd$_A M = {\rm gl.dim} A$. We have $$d_e(M)+{\rm pd}_\Gamma F(eA) \le {\rm id}_A S_e +{\rm pd}_\Gamma F(eA) < {\rm gl.dim} A -1 = {\rm pd}_A M-1.$$ Therefore, from Proposition \ref{firstprop}, pd$_A M = {\rm pd}_\Gamma F(M)$. This gives pd$_\Gamma F(M) = {\rm gl.dim} A$. Thus, gl.dim$\Gamma= {\rm gl.dim} A$.
\end{proof}

Note that the bound obtained contains a number that depends on $\Gamma$, namely ${\rm pd}_\Gamma F(eA)$. In general, we cannot replace the latter by a number that does not depend on $\Gamma$. Indeed, in general, ${\rm gl.dim} A < \infty$ does not imply ${\rm gl.dim} \Gamma < \infty$. However, if $S_e$ is self-orthogonal, then the first syzygy $\Omega$ of $S_e$ satisfies $\Ext^i_A(\Omega, S_e) = 0$ for all $i \ge 0$. As observed above, this gives ${\rm pd}_{\Gamma}F(eA) = {\rm pd}_{\Gamma}F(\Omega) = {\rm pd}_{A}\Omega$.
Hence, we get the following.

\begin{Prop} \label{prop1}
Suppose that $S_e$ is self-orthogonal.
\begin{enumerate}[$(1)$]
    \item If $S_e$ is not projective, then ${\rm pd}_{\Gamma}F(eA) = {\rm pd}_{A}S_e - 1$.
\item We have ${\rm gl.dim} \Gamma \le  {\rm max}({\rm id}_A S_e + {\rm pd}_A S_e -1,{\rm gl.dim} A)$, with equality if ${\rm id}_A S_e + {\rm pd}_A S_e < {\rm gl.dim} A$.
\item If ${\rm gl.dim} A < \infty$, then ${\rm gl.dim} \Gamma < \infty$.\end{enumerate}
\end{Prop}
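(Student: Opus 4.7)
The plan is to prove part~(1) directly and then read off parts~(2) and~(3) from Corollary~\ref{firstcoro}. The content lies essentially in part~(1); the remaining two parts are bookkeeping.

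For part~(1), I would introduce the first syzygy $\Omega$ of $S_e$, sitting in a short exact sequence
$$0 \to \Omega \to eA \to S_e \to 0$$
in $\C(A)$, and verify that $\Ext^i_A(\Omega, S_e) = 0$ for all $i \ge 0$. For $i \ge 1$ this is immediate from the long exact sequence of $\Hom_A(-, S_e)$ applied to the displayed sequence: $eA$ is projective, killing $\Ext^i_A(eA, S_e)$ for $i \ge 1$, and self-orthogonality of $S_e$ kills $\Ext^{i+1}_A(S_e, S_e)$. For $i = 0$, the same long exact sequence makes the restriction map $\Hom_A(eA, S_e) \to \Hom_A(\Omega, S_e)$ surjective, with cokernel inside $\Ext^1_A(S_e, S_e) = 0$; but any morphism $eA \to S_e$ factors through the projection onto the top $S_e$ and therefore restricts to zero on $\Omega \subseteq e \cdot {\rm rad}\,A$, so $\Hom_A(\Omega, S_e) = 0$. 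With this vanishing in hand, Proposition~\ref{HomologicalProp}(4) (or its graded counterpart Proposition~\ref{HomologicalPropGraded}(4)) shows that $F$ carries a minimal projective resolution of $\Omega$ to a minimal projective resolution of $F(\Omega)$, so ${\rm pd}_\Gamma F(\Omega) = {\rm pd}_A \Omega = {\rm pd}_A S_e - 1$, using non-projectivity of $S_e$. Applying the exact functor $F$ to the displayed sequence and noting $F(S_e) = 0$ yields $F(\Omega) \cong F(eA)$, so ${\rm pd}_\Gamma F(eA) = {\rm pd}_A S_e - 1$.

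For parts~(2) and~(3), I would substitute this equality into both halves of Corollary~\ref{firstcoro}. The bound becomes precisely the one stated in part~(2), and the strict inequality ${\rm id}_A S_e + {\rm pd}_\Gamma F(eA) < {\rm gl.dim}\,A - 1$ needed for equality in the corollary rewrites as ${\rm id}_A S_e + {\rm pd}_A S_e < {\rm gl.dim}\,A$, giving the stated equality criterion. The degenerate case where $S_e$ is itself projective must be treated separately since part~(1) excludes it; but then $eA = S_e$, so $F(eA) = 0$, and Corollary~\ref{firstcoro} yields ${\rm gl.dim}\,\Gamma \le {\rm gl.dim}\,A$, still within the stated bound. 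Part~(3) is then immediate: finiteness of ${\rm gl.dim}\,A$ forces both ${\rm id}_A S_e$ and ${\rm pd}_A S_e$ to be finite, so the bound in part~(2) is finite. The only step meriting real care is the vanishing of $\Hom_A(\Omega, S_e)$, which rests on $\Omega$ sitting inside the (graded) radical of $eA$; beyond this I do not anticipate any obstacle.
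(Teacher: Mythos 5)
Your proposal is correct and follows exactly the argument the paper gives in the paragraph preceding the proposition: identify the first syzygy $\Omega$ of $S_e$, check $\Ext^i_A(\Omega,S_e)=0$ for all $i\ge 0$ (the paper states this without elaborating the $i=0$ case, which you handle correctly by observing $\Omega \subseteq e\,{\rm rad}A$), conclude ${\rm pd}_\Gamma F(eA)={\rm pd}_\Gamma F(\Omega)={\rm pd}_A\Omega={\rm pd}_A S_e-1$, and substitute into Corollary~\ref{firstcoro}. Your treatment of the degenerate case $S_e$ projective is a small but welcome bit of care that the paper leaves implicit.
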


Part (3) of Proposition \ref{prop1} can also be deduced from results in \cite{APT} or in \cite{FullerSaorin}, in the finite dimensional case.

\begin{Remark} In the terminology of \cite{APT}, when $A$ is finite dimensional, then $S_e$ is self-orthogonal if and only if the idempotent ideal $A(1-e)A$ is strong idempotent: every indecomposable summand in the minimal projective resolution of the right $A$-module $A(1-e)A$ is in add$((1-e)A)$. However, the above bound seems not to appear in that paper.
\end{Remark}

For $M \in \mmod A$, $\Omega(M)$ denotes the first syzygy of $M$. If $M \in \C(A)$, then $\Omega(M) \in \C(A)$. For finding a bound on gl.dim$A$, we have the following.

\begin{Prop} \label{Prop2}
Let $M \in \C(A)$, and set $r = {\rm id}_A S_e$.
\begin{enumerate}[$(1)$]
    \item If $r$ is finite, then ${\rm pd}_A M \le r + {\rm pd}_\Gamma F(\Omega^{r+1}(M)) + 1$.
\item We have ${\rm gl.dim} A \le r + {\rm gl.dim} \Gamma + 1$.\end{enumerate}
\end{Prop}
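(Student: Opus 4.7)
The plan is to reduce to Proposition~\ref{firstprop} by showing that the injective dimension hypothesis on $S_e$ forces $d_e(N)=-1$ for $N:=\Omega^{r+1}(M)$. The key ingredient is the standard dictionary between minimal projective resolutions and $\Ext$: if $\cdots\to P_i\to\cdots\to P_0\to M\to 0$ is a minimal projective resolution of $M$ in $\C(A)$, then every differential of the complex $\Hom_A(P_\bullet,S_e)$ vanishes, because $P_{i+1}\to P_i$ is a radical morphism and any map from the radical of a projective into a simple module is zero. Hence $\Ext^i_A(M,S_e)\cong \Hom_A(P_i,S_e)$, whose $k$-dimension is the multiplicity of $eA$ (or of a shift of $eA$ in the graded case) as a direct summand of $P_i$. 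Since ${\rm id}_A S_e=r$, this multiplicity vanishes for all $i\ge r+1$, so $P_i\in{\rm add}((1-e)A)$ (or the graded analogue) for every $i\ge r+1$.

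The minimal projective resolution of $N$ is then the tail $\cdots\to P_{r+2}\to P_{r+1}\to N\to 0$, all of whose terms lie in ${\rm add}((1-e)A)$. Running the same Hom--Ext dictionary for $N$ yields $\Ext^i_A(N,S_e)=0$ for all $i\ge 0$, i.e.\ $d_e(N)=-1$. Proposition~\ref{firstprop} in the case $d_e=-1$ then gives ${\rm pd}_A N={\rm pd}_\Gamma F(N)$; equivalently, this is Proposition~\ref{HomologicalProp}(4) (or Proposition~\ref{HomologicalPropGraded}(4)) applied to $N$, which asserts that $F$ sends the minimal projective resolution of $N$ to a minimal projective resolution of $F(N)$ in $\C(\Gamma)$.

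Splicing this resolution of $N$ onto the exact sequence $0\to N\to P_r\to\cdots\to P_0\to M\to 0$ coming from the first $r+1$ terms of the resolution of $M$ produces a projective resolution of $M$ of length at most $r+1+{\rm pd}_A N$, whence
$${\rm pd}_A M\le r+1+{\rm pd}_A N = r+1+{\rm pd}_\Gamma F(N),$$
which is part~(1). For part~(2), taking suprema over $M\in\C(A)$ and using that each $F(\Omega^{r+1}(M))$ is an object of $\C(\Gamma)$, together with the fact recalled in Sections~2 and~3 that the global dimension of $A$ (resp.\ $\Gamma$) is the supremum of projective dimensions of objects of $\C(A)$ (resp.\ $\C(\Gamma)$), immediately gives ${\rm gl.dim}\,A\le r+{\rm gl.dim}\,\Gamma+1$.

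There is no real obstacle: the finiteness of $r$ is used only to truncate the resolution after $r+1$ steps, after which Proposition~\ref{firstprop} does the work. The one thing to verify carefully is that the Hom--Ext counting formula holds verbatim in the graded setting, which it does, since graded radical morphisms and minimal graded projective resolutions in $\C(A)$ behave exactly as in the semiperfect case.
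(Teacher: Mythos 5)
Your proof is correct and takes essentially the same approach as the paper: pass to $L=\Omega^{r+1}(M)$, observe that ${\rm id}_A S_e=r$ forces $\Ext^i_A(L,S_e)=0$ for all $i\ge 0$, invoke the $d_e=-1$ case (Proposition~\ref{HomologicalProp}(4) / \ref{HomologicalPropGraded}(4), equivalently Proposition~\ref{firstprop}) to conclude ${\rm pd}_A L={\rm pd}_\Gamma F(L)$, and splice. The only difference is that you spell out the Hom--Ext dictionary for minimal projective resolutions and the truncation bookkeeping that the paper's proof leaves implicit.
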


\begin{proof}
Statement $(2)$ follows from statement $(1)$. Assume that $r < \infty$. Let $L = \Omega^{r+1}(M)$, that is, $L$ is the $(r+1)$-th syzygy of $M$ in $\C(A)$.  Then $\Ext^i_A(L,S_e)=0$ for all $i$.  Applying $F$ to a minimal projective resolution of $L$ yields a minimal projective resolution of $F(L)$. Hence, ${\rm pd}_A L = {\rm pd}_\Gamma F(L)$. Thus, ${\rm pd}_A M \le r+1 + {\rm pd}_A L = r + {\rm pd}_\Gamma F(L) + 1$.
\end{proof}

Since the left global dimension of $A$ coincides with the right global dimension of $A$, in the above proposition, we can replace the injective dimension of $S_e = eA/e{\rm rad}A$ by the injective dimension of $Ae/{\rm rad}Ae$. In case $A$ is finite dimensional, id$_A (Ae/{\rm rad}Ae) = {\rm pd}_A S_e$. Thus, the following result follows immediately.

\begin{Prop} \label{LastProp}
Assume that $A$ is finite dimensional. Then $${\rm gl.dim} A \le {\rm min}({\rm id}_A S_e, {\rm pd}_A S_e) + {\rm gl.dim} \Gamma + 1.$$
\end{Prop}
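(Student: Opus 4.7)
The strategy is to apply Proposition \ref{Prop2}(2) twice --- once to $A$ itself and once to the opposite algebra $A^{op}$ --- and then combine the two bounds using Auslander's theorem on the equality of left and right global dimensions for Noetherian algebras.

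First, Proposition \ref{Prop2}(2) applied directly to $A$ (working with right modules, so that $S_e = eA/e{\rm rad}A$ is the relevant simple) yields
$${\rm gl.dim}\, A \;\le\; {\rm id}_A S_e + {\rm gl.dim}\, \Gamma + 1.$$
This gives one half of the minimum.

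Next, I would apply Proposition \ref{Prop2}(2) to the opposite algebra $A^{op}$. Right $A^{op}$-modules are left $A$-modules; in this setting the simple ``$S_e$'' becomes the simple left $A$-module $Ae/{\rm rad}Ae$, and the corresponding idempotent subalgebra is $(1-e)A^{op}(1-e)=\Gamma^{op}$. Proposition \ref{Prop2}(2) thus gives
$${\rm gl.dim}\, A^{op} \;\le\; {\rm id}_{A^{op}}\!\bigl(Ae/{\rm rad}Ae\bigr) + {\rm gl.dim}\, \Gamma^{op} + 1.$$
By Auslander's theorem (cited in the introduction), gl.dim is left-right symmetric for Noetherian algebras, so ${\rm gl.dim}\,A^{op}={\rm gl.dim}\,A$ and ${\rm gl.dim}\,\Gamma^{op}={\rm gl.dim}\,\Gamma$.

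The last ingredient is the standard $k$-duality $D=\Hom_k(-,k)\colon \mmod\,A \to \mmod\,A^{op}$, which is available precisely because $A$ is finite dimensional. Since $D$ is an exact contravariant equivalence that interchanges projective and injective modules and sends the left simple $Ae/{\rm rad}Ae$ to the right simple $S_e$, applying $D$ to a minimal injective resolution of $Ae/{\rm rad}Ae$ produces a minimal projective resolution of $S_e$. Hence
$${\rm id}_{A^{op}}\!\bigl(Ae/{\rm rad}Ae\bigr) \;=\; {\rm pd}_A S_e,$$
which, combined with the previous inequality, gives
$${\rm gl.dim}\, A \;\le\; {\rm pd}_A S_e + {\rm gl.dim}\, \Gamma + 1.$$
Taking the minimum of the two bounds yields the proposition. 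There is no real obstacle here: the only points that need a line of justification are the identification $\Gamma^{op}=(1-e)A^{op}(1-e)$ and the fact that $D$ swaps the two dimensions for finite-dimensional $A$; both are routine.
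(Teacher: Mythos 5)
Your proof is correct and is essentially the argument the paper gives: the authors also deduce the statement from Proposition~\ref{Prop2}(2) applied once to $A$ and once (via Auslander's left--right symmetry of global dimension) to the opposite side, then use the $k$-duality $D=\Hom_k(-,k)$ available in the finite-dimensional case to convert $\mathrm{id}_A(Ae/\mathrm{rad}\,Ae)$ into $\mathrm{pd}_A S_e$. You have merely spelled out the passage to $A^{\mathrm{op}}$ and $\Gamma^{\mathrm{op}}$ explicitly, which the paper leaves implicit.
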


The following lemma is essential for describing the kernel of $F$. It will be particularly useful when $e$ is a primitive idempotent. For $M \in \C(A)$, we denote by add$(M)$ the modules which are direct summands of finite direct sums of copies of (shift of) $M$ in $\C(A)$.

\begin{Lemma} \label{SNLC}
Suppose that $\Ext^1_A(S_e,S_e) = 0$. Then, for $M \in \C(A)$, $F(M)=0$ if and only if $M \in {\rm add}(S_e)$.
\end{Lemma}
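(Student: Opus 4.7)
The plan is as follows. The ``if'' direction is immediate from part~(5) of Propositions~\ref{HomologicalProp} and \ref{HomologicalPropGraded}: every (shift of a) summand of $S_e$ is annihilated on the right by $1-e$, and $F$ preserves direct sums (and shifts). For the converse, I would begin by observing that $F(M)=0$ is equivalent to $M(1-e)=0$, i.e.\ $Me=M$. The strategy is to show that $M\,{\rm rad}A=0$; once this is established, $M$ is semisimple, and since each of its simple summands is killed by $1-e$, each must be (a shift of) a summand of $S_e$, so $M\in{\rm add}(S_e)$.

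To prove $M\,{\rm rad}A=0$ I would rely on two short computations. First, from $Me=M$ one obtains $M\cdot e\,{\rm rad}A(1-e)\subseteq (M\,{\rm rad}A)(1-e)\subseteq M(1-e)=0$, so the decomposition $e\,{\rm rad}A = e\,{\rm rad}A\,e\,\oplus\,e\,{\rm rad}A(1-e)$ collapses upon multiplication by $M$ to $M\,{\rm rad}A = M\cdot e\,{\rm rad}A\,e$. Second, applying $\Hom_A(-,S_e)$ to the projective-cover sequence $0\to e\,{\rm rad}A\to eA\to S_e\to 0$ identifies $\Ext^1_A(S_e,S_e)$ with the $S_e$-isotypic component of the top $e\,{\rm rad}A/e({\rm rad}A)^2$ of the first syzygy of $S_e$ -- explicitly with $e\,{\rm rad}A\,e / e({\rm rad}A)^2\,e$. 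The hypothesis $\Ext^1_A(S_e,S_e)=0$ therefore yields $e\,{\rm rad}A\,e = e({\rm rad}A)^2 e \subseteq ({\rm rad}A)^2$. (In the positively graded case the argument is identical, once one recalls that the paper's $\Ext^1_A$ is the total Ext $\bigopls_i\Ext^1_{{\rm gr}A}(S_e,S_e[i])$, so its vanishing is equivalent to the vanishing of every graded shift.)

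Combining these two statements, $M\,{\rm rad}A = M\cdot e\,{\rm rad}A\,e \subseteq M({\rm rad}A)^2 = (M\,{\rm rad}A)\,{\rm rad}A$. Setting $N:=M\,{\rm rad}A$, a finitely generated (graded) submodule of $M$, we have $N = N\,{\rm rad}A$; Nakayama's lemma in the semiperfect case, or its graded analogue (applied to the lowest-degree component of $N$) in the positively graded case, then forces $N=0$, and the proof is complete. The main obstacle will be the second computation above: extracting the concrete bimodule inclusion $e\,{\rm rad}A\,e\subseteq({\rm rad}A)^2$ from a cohomological vanishing statement. This step is available precisely because in both settings $S_e$ admits a minimal projective cover and $\Ext^1_A(S_e,S_e)$ records exactly the multiplicity of (shifts of) $S_e$ in the top of the first syzygy; the bookkeeping of the idempotent-bimodule decomposition of ${\rm rad}A$ then does the rest.
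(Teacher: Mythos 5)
Your proof is correct and is essentially the paper's argument with the key step made explicit. The paper reduces to a projective cover $P\to M$ with $P\in\mathrm{add}(eA)$ and asserts, without further detail, that $\Ext^1_A(S_e,S_e)=0$ forces $P/PI\in\mathrm{add}(S_e)$ where $I=A(1-e)A$; your computation that $\Ext^1_A(S_e,S_e)=0$ gives $e\,\mathrm{rad}A\,e\subseteq(\mathrm{rad}A)^2$, hence $M\,\mathrm{rad}A=Me\,\mathrm{rad}A\,e\subseteq(M\,\mathrm{rad}A)\,\mathrm{rad}A$, followed by Nakayama, is precisely the bimodule argument underlying that assertion.
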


\begin{proof}
Suppose that $M \in\C(A)$ is indecomposable and $F(M)=0$.  Let $I = A(1-e)A$.  Then $M \cong M/MI$, that is, $M$ is a $A/I$-module. Observe that $M$ has as a projective cover $P \to M$ in $\C(A)$ with $P$ a finite direct sum of (shift of) copies of $eA$. Hence, we have an epimorphism $P/PI \to M$. Since $\Ext^1_A(S_e,S_e) = 0$, we have that $P/PI$ is in add$(S_e)$, and so is $M$.
\end{proof}

In the above lemma, when $A$ is finite dimensional and $e$ is primitive, ${\rm pd}_A S_e < \infty$ is enough to guarantee the condition $\Ext^1_A(S_e,S_e) = 0$; see \cite{ILP}. However, when $A$ is not finite dimensional, the condition ${\rm pd}_A S_e < \infty$ is not sufficient.
Observe also that when $A$ is not finite dimensional, in Proposition \ref{Prop2}, we cannot replace ${\rm id}_A S_e$ by ${\rm pd}_A S_e$ using a duality argument: there may not be a duality between $\mmod A$ and $\mmod A^{\,\rm op}$. However, we still get the following.

\begin{Prop} \label{LastProp2}
Assume $\Ext^1_A(S_e, S_e)=0$ and let $M \in \C(A)$. Then
\begin{enumerate}[$(1)$]
    \item ${\rm pd}_A M \le {\rm pd}_A S_e + {\rm pd}_\Gamma F(M) + 1.$
\item ${\rm gl.dim} A \le {\rm pd}_A S_e + {\rm gl.dim} \Gamma + 1.$
\item ${\rm gl.dim} A \le {\rm min}({\rm id}_A S_e, {\rm pd}_A S_e) + {\rm gl.dim} \Gamma + 1.$
\end{enumerate}
\end{Prop}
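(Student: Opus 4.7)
The plan is to prove (1) first by induction on $t := {\rm pd}_\Gamma F(M)$, then derive (2) by taking the supremum over $M \in \C(A)$ and using essential surjectivity of $F$ (Propositions \ref{HomologicalProp}/\ref{HomologicalPropGraded}), and finally obtain (3) by combining (2) with Proposition \ref{Prop2}(2), which already supplies the bound with ${\rm id}_A S_e$ in place of ${\rm pd}_A S_e$ and holds with no self-orthogonality assumption.

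For (1), write $s = {\rm pd}_A S_e$; we may assume $t$ is finite. I will use the adjunction $(G,F)$ to lift a projective presentation of $F(M)$ back to $\C(A)$. Pick a minimal projective cover $R_0 \to F(M)$ in $\C(\Gamma)$ and let $\varphi \colon G(R_0) \to M$ be its adjunct in $\C(A)$. Since $R_0$ is projective, the unit $\eta_{R_0} \colon R_0 \to FG(R_0)$ is an isomorphism (recall that $F,G$ restrict to quasi-inverse equivalences between the additive hulls of $(1-e)A$ and $\Gamma_\Gamma$), so applying $F$ to $\varphi$ gives back the original surjection $R_0 \to F(M)$ up to isomorphism. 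In particular $\mathrm{coker}(\varphi)$ and $\ker(\varphi)$ lie in $\ker F$; by Lemma \ref{SNLC} the cokernel $C$ is in ${\rm add}(S_e)$, and since $F$ is exact, the kernel $K$ satisfies $F(K) \cong \Omega^1_\Gamma F(M)$, whose projective dimension over $\Gamma$ is $t-1$. By the induction hypothesis, ${\rm pd}_A K \le s + (t-1) + 1 = s+t$; the base $t=0$ is the case $K\in{\rm add}(S_e)$, where $\mathrm{pd}_A K\le s$ directly.

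Now assemble the bound from two short exact sequences in $\C(A)$:
\begin{equation*}
0 \to K \to G(R_0) \to \mathrm{Im}(\varphi) \to 0, \qquad 0 \to \mathrm{Im}(\varphi) \to M \to C \to 0.
\end{equation*}
Since $G(R_0) \in {\rm add}((1-e)A)$ is projective, the first sequence gives ${\rm pd}_A\, \mathrm{Im}(\varphi) \le {\rm pd}_A K + 1 \le s+t+1$, and the second then gives ${\rm pd}_A M \le \max({\rm pd}_A\, \mathrm{Im}(\varphi),\ {\rm pd}_A C) \le \max(s+t+1,\ s) = s+t+1$, proving (1). For (2), the essential surjectivity of $F$ shows that ${\rm gl.dim}\,\Gamma = \sup_{M\in\C(A)} {\rm pd}_\Gamma F(M)$, so (1) yields ${\rm gl.dim}\,A \le s + {\rm gl.dim}\,\Gamma + 1$. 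Part (3) is immediate from (2) together with Proposition \ref{Prop2}(2).

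The main obstacle is verifying the induction step cleanly, and in particular ensuring that the adjunct map $\varphi$ really recovers the chosen projective cover after $F$ — this is what lets $F(K)$ be identified with $\Omega^1_\Gamma F(M)$ so the induction hypothesis applies. The use of Lemma \ref{SNLC} is essential here: without the hypothesis $\Ext^1_A(S_e,S_e)=0$, the cokernel $C$ could fail to be in ${\rm add}(S_e)$ and the final assembly would break.
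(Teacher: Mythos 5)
Your argument is correct and in fact produces the same two short exact sequences that the paper uses, but you arrive at them by a slightly different route. The paper first splits off the $\mathrm{add}(S_e)$-part of the top of $M$ (possible because $\Ext^1_A(S_e,S_e)=0$) to get $0\to N\to M\to S\to 0$ with $S\in\mathrm{add}(S_e)$, and then takes a projective cover $P_0\to N$ in $\C(A)$, whose kernel $K$ satisfies $F(K)\cong\Omega^1_\Gamma F(M)$ because $P_0$ has no summand from $\mathrm{add}(eA)$. You instead start from a minimal projective cover $R_0\to F(M)$ in $\C(\Gamma)$, lift it through the adjunction $(G,F)$ to $\varphi\colon G(R_0)\to M$, and take its image, kernel, and cokernel; since $G(R_0)\in\mathrm{add}((1-e)A)$ and the unit $\eta_{R_0}$ is an isomorphism, $\mathrm{Im}(\varphi)$ plays the role of $N$ and $F(K)\cong\Omega^1_\Gamma F(M)$ follows directly from exactness of $F$, sidestepping the paper's implicit verification that $F(P_0)\to F(N)$ is still a projective cover. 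Both yield the same induction on $\mathrm{pd}_\Gamma F(M)$ with the same use of Lemma \ref{SNLC}. One sentence of your writeup is internally inconsistent: you claim that both $\ker(\varphi)$ and $\mathrm{coker}(\varphi)$ lie in $\ker F$, but only the cokernel does (the kernel has $F(K)\cong\Omega^1_\Gamma F(M)$, which is generally nonzero); your very next clause states the correct identification, so this is a slip of the pen rather than a gap, but it should be fixed.
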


\begin{proof}
Statement (3) follows from Statement (2) and Proposition \ref{Prop2}. Statement (2) follows from Statement (1). To prove Statement (1), we may assume that ${\rm pd}_\Gamma F(M) = m  < \infty$ and ${\rm pd}_A S_e = r < \infty$. Since $\Ext^1_A(S_e, S_e)=0$, we see that there exists a short exact sequence
$$\eta: \;\; 0 \to N \to M \to S \to 0$$
in $\C(A)$ where $S\in {\rm add}(S_e)$ such that the top of $N$ does not contain (a shift of) a direct summand of $S_e$ as a direct summand. Let $P_0 \to N \to 0$ be a projective cover in $\C(A)$ with kernel $K$. Then $F(P_0) \to F(N) \to 0$ is also a projective cover in $\C(\Gamma)$. We proceed by induction on $m$. Suppose first that $m=0$. Then we have $F(K)=0$. From Lemma \ref{SNLC}, we get that $K \in {\rm add}(S_e)$. Thus, ${\rm pd}_A K \le r$, and this gives ${\rm pd}_A N \le r+1$. The short exact sequence $\eta$ gives ${\rm pd}_A M \le r+1$ as wanted. Now, assume $m > 0$. We have ${\rm pd}_\Gamma F(K) \le m-1$. Therefore, by induction, ${\rm pd}_A K \le r+m$ and using the same argument as above, we get ${\rm pd}_A M \le r + m + 1$.
\end{proof}

\begin{Cor}
Suppose that $S_e$ is self-orthogonal. Then $${\rm gl.dim} A \le 2{\rm gl.dim} \Gamma + 2$$ and  $${\rm gl.dim} \Gamma \le  {\rm max}({\rm id}_A S_e + {\rm pd}_A S_e -1,{\rm gl.dim} A).$$
\end{Cor}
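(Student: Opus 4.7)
The plan is to derive both inequalities by assembling results already proved earlier in the section, rather than by any new argument. The second inequality is literally the content of Proposition \ref{prop1}(2) under the self-orthogonality hypothesis, so nothing further is needed on that side: I would simply cite that proposition.

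For the first inequality ${\rm gl.dim}\,A \le 2\,{\rm gl.dim}\,\Gamma + 2$, the idea is to combine two earlier bounds. Since $S_e$ is self-orthogonal, in particular $\Ext^1_A(S_e,S_e)=0$, so Proposition \ref{LastProp2}(2) applies and gives
$${\rm gl.dim}\,A \le {\rm pd}_A S_e + {\rm gl.dim}\,\Gamma + 1.$$
It then suffices to bound ${\rm pd}_A S_e$ by ${\rm gl.dim}\,\Gamma + 1$. If $S_e$ is projective this is trivial. Otherwise, Proposition \ref{prop1}(1) says ${\rm pd}_\Gamma F(eA) = {\rm pd}_A S_e - 1$, and since $F(eA)$ is an object of $\C(\Gamma)$ its projective dimension is at most ${\rm gl.dim}\,\Gamma$. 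Substituting ${\rm pd}_A S_e \le {\rm gl.dim}\,\Gamma + 1$ into the previous bound yields
$${\rm gl.dim}\,A \le ({\rm gl.dim}\,\Gamma + 1) + {\rm gl.dim}\,\Gamma + 1 = 2\,{\rm gl.dim}\,\Gamma + 2,$$
as desired.

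The only delicate points are the edge cases. When ${\rm gl.dim}\,\Gamma = \infty$ both inequalities are vacuous. When ${\rm pd}_A S_e = \infty$ and $S_e$ is not projective, Proposition \ref{prop1}(1) forces ${\rm pd}_\Gamma F(eA) = \infty$, hence ${\rm gl.dim}\,\Gamma = \infty$, so no case is lost. The projective case ${\rm pd}_A S_e = 0$ must be checked separately because Proposition \ref{prop1}(1) is stated only when $S_e$ is non-projective, but in that case the bound holds for free.

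I do not expect any real obstacle: the hard technical work has already been done in Propositions \ref{prop1} and \ref{LastProp2}, and the corollary is essentially a two-line packaging of them. The most error-prone step is simply remembering to separate the projective and non-projective cases for $S_e$ when invoking Proposition \ref{prop1}(1).
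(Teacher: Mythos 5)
Your proof is correct and follows essentially the same route as the paper: both cite Proposition \ref{prop1} for the second inequality, invoke Proposition \ref{LastProp2}(2) for ${\rm gl.dim}\,A \le {\rm pd}_A S_e + {\rm gl.dim}\,\Gamma + 1$, and then bound ${\rm pd}_A S_e$ by ${\rm gl.dim}\,\Gamma + 1$ via the first syzygy $\Omega$ of $S_e$ (the paper works with ${\rm pd}_\Gamma F(\Omega)$ directly, you via Proposition \ref{prop1}(1) and $F(eA)\cong F(\Omega)$, which is the same fact). Your explicit handling of the projective and infinite-dimension edge cases is a small bonus over the paper's phrasing but does not constitute a different approach.
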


\begin{proof}
The second inequality is just Proposition \ref{prop1}. Let $\Omega$ be the first syzygy of $S_e$.  Then $\Ext^i_A(\Omega, S_e)=0$ for all non-negative integers $i$. This means that ${\rm pd}_A \Omega = {\rm pd}_\Gamma F(\Omega) \le {\rm gl.dim} \Gamma$. Now from Proposition \ref{LastProp2},
\begin{eqnarray*}
{\rm gl.dim} A &\le&  {\rm pd}_A S_e + {\rm gl.dim} \Gamma + 1\\
& = & {\rm pd}_A \Omega + {\rm gl.dim} \Gamma + 2\\
& \le & 2{\rm gl.dim} \Gamma + 2.
\end{eqnarray*} \qedhere
\end{proof}

The following theorem, in the case of a finite dimensional $k$-algebra, follows from the fact \cite{Howard} that when $S_e$ is self-orthogonal (that is, $\Ext_A^i(S_e, S_e)=0$ for all $i > 0$), then the singularity categories of $A$ and $\Gamma$ are triangle-equivalent.

\begin{Theo}
Let $A$ be Noetherian which is either semiperfect or positively graded. Suppose that $S_e$ is self-orthogonal.  Then ${\rm gl.dim} \Gamma < \infty$ if and only if ${\rm gl.dim} A < \infty$.
\end{Theo}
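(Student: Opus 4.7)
The plan is to observe that this theorem is essentially a packaging of the two inequalities in the Corollary immediately preceding it, so no new argument is needed; I would simply verify that both implications follow from those bounds.

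For the forward direction, suppose ${\rm gl.dim}\, A < \infty$. Because global dimension bounds both projective and injective dimensions of every module in $\C(A)$, in particular ${\rm pd}_A S_e \le {\rm gl.dim}\, A < \infty$ and ${\rm id}_A S_e \le {\rm gl.dim}\, A < \infty$. I would then feed these finite quantities into the second inequality of the preceding Corollary, namely
$${\rm gl.dim}\,\Gamma \le \max\bigl({\rm id}_A S_e + {\rm pd}_A S_e - 1,\; {\rm gl.dim}\, A\bigr),$$
whose right-hand side is a maximum of finite numbers, hence finite.

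For the reverse direction, suppose ${\rm gl.dim}\,\Gamma < \infty$. Here I would invoke the first inequality of the preceding Corollary, namely
$${\rm gl.dim}\, A \le 2\,{\rm gl.dim}\,\Gamma + 2,$$
whose right-hand side is immediately finite. Note that this first inequality relied on the self-orthogonality hypothesis to conclude that the first syzygy $\Omega$ of $S_e$ satisfies ${\rm pd}_A\,\Omega = {\rm pd}_\Gamma F(\Omega) \le {\rm gl.dim}\,\Gamma$, which in turn bounds ${\rm pd}_A S_e$; this is precisely the assumption we have.

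There is essentially no obstacle here, since the technical content was already absorbed into the Corollary (itself a consequence of Propositions \ref{prop1} and \ref{LastProp2}). The only thing to be careful about is remembering that self-orthogonality of $S_e$ implies in particular $\Ext^1_A(S_e,S_e)=0$, which is the hypothesis invoked through Proposition \ref{LastProp2} in deriving the bound ${\rm gl.dim}\, A \le 2\,{\rm gl.dim}\,\Gamma + 2$; so both hypotheses needed by the Corollary are subsumed by the single self-orthogonality assumption in the theorem.
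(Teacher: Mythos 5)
Your proposal is correct and is essentially the paper's own (implicit) argument: the theorem sits immediately after the Corollary asserting ${\rm gl.dim}\, A \le 2\,{\rm gl.dim}\,\Gamma + 2$ and ${\rm gl.dim}\,\Gamma \le \max({\rm id}_A S_e + {\rm pd}_A S_e - 1, {\rm gl.dim}\, A)$, and the paper offers no further proof precisely because the equivalence is an immediate consequence of those two bounds. You correctly note the only point requiring care, namely that ${\rm id}_A S_e$ is finite whenever ${\rm gl.dim}\,A$ is (which holds by Osofsky's characterization of global dimension for Noetherian $A$, as cited in Section 2), and that self-orthogonality subsumes the $\Ext^1_A(S_e,S_e)=0$ hypothesis feeding Proposition \ref{LastProp2}.
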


One question remains: if both ${\rm gl.dim} \Gamma, {\rm gl.dim} A$ are finite, does this imply that $S_e$ is self-orthogonal? As already observed, the answer is no in general, even if $e$ is primitive. We may have $e$ primitive and $\Ext_A^1(S_e,S_e)\ne 0$ with both $A, \Gamma$ of finite global dimensions. This cannot happens when $A$ is finite dimensional, since ${\rm pd}_A S_e < \infty$ implies $\Ext_A^1(S_e,S_e)=0$. So, in the finite dimensional case, we have the following conjecture.

\begin{Conj} \label{conj3}
Let $A$ be a finite dimensional $k$-algebra with $e$ primitive. If both ${\rm pd}_A S_e$ and ${\rm pd}_\Gamma (eA(1-e))$ are finite, then $S_e$ is self-orthogonal.
\end{Conj}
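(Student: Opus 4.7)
I proceed by contradiction. Since ${\rm pd}_A S_e < \infty$, by \cite{ILP} we have $\Ext^1_A(S_e, S_e) = 0$, so let $i \geq 2$ be the smallest index with $\Ext^i_A(S_e, S_e) \neq 0$. Write $r = {\rm pd}_A S_e$, $s = {\rm pd}_\Gamma eA(1-e)$, and $K_j = \Omega^j S_e$. In the minimal projective resolution of $S_e$, each term is $P_j = (eA)^{m_j} \oplus Q_j$ with $Q_j \in {\rm add}((1-e)A)$ and $m_j = \dim \Ext^j_A(S_e, S_e)$; by hypothesis $m_1 = \cdots = m_{i-1} = 0$ and $m_i \geq 1$, so $P_j \in {\rm add}((1-e)A)$ for $1 \leq j \leq i - 1$. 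A first observation is that $F(K_1) = e\,{\rm rad}(A)(1-e) = eA(1-e) = F(eA)$, since $e(1-e) = 0$.

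The central computation is the following Peirce-level identity: for any $P \in {\rm add}((1-e)A)$, say $P = \bigoplus_k e_k A$ with each $k$ distinct from the index of $e$, both $P \cdot {\rm rad}(A)(1-e)$ and $F(P) \cdot {\rm rad}(\Gamma)$ equal $\bigoplus_k e_k\,{\rm rad}(A)(1-e)$ (for the second equality one uses $e_k \in (1-e)A(1-e) = \Gamma$). Applied at each $j$ with $1 \leq j \leq i - 1$, together with the containment $K_{j+1} \subseteq P_j \cdot {\rm rad}(A)$ from minimality of the projective cover, this yields $F(K_{j+1}) \subseteq F(P_j) \cdot {\rm rad}(\Gamma)$, so each $F(P_j) \to F(K_j)$ is a projective cover in $\mmod \Gamma$. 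Therefore, applying $F$ to the minimal projective resolution of $K_1$ produces a minimal projective resolution of $eA(1-e)$ in $\mmod\Gamma$ through degree $i - 2$, and $F(K_i)$ is the $(i-1)$-st syzygy of $eA(1-e)$, giving ${\rm pd}_\Gamma F(K_i) = s - (i-1) < s$.

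For the final contradiction, I apply $F$ to the short exact sequence $0 \to K_{i+1} \to P_i \to K_i \to 0$ to obtain
\[
0 \to F(K_{i+1}) \to (eA(1-e))^{m_i} \oplus F(Q_i) \to F(K_i) \to 0,
\]
whose middle term has $\Gamma$-projective dimension exactly $s$ (since $m_i \geq 1$ and $F(Q_i)$ is projective). Because ${\rm pd}_\Gamma F(K_i) = s - (i - 1) < s$, the standard projective-dimension inequalities for a short exact sequence force ${\rm pd}_\Gamma F(K_{i+1}) = s$. I would then continue along the subsequent short exact sequences of the resolution, comparing the forced lower bounds against the sharp equality ${\rm pd}_\Gamma F(K_{i^* + 1}) = r - i^* - 1$ provided by Proposition \ref{firstprop} at the maximal index $i^*$ with $\Ext^{i^*}_A(S_e, S_e) \neq 0$ (where $d_e(K_{i^* + 1}) = -1$). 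The finite stopping point $K_{r + 1} = 0$ of the resolution then forces a collapse of ${\rm pd}_\Gamma F(K_j)$ that conflicts with the sustained lower bound~$s$, producing the desired contradiction.

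The main obstacle of this plan is twofold. The first piece is the Peirce-level identity and the conclusion that $F$ preserves minimality of the projective resolution through degree $i - 2$; this is essentially a bookkeeping exercise with the Peirce decomposition of $A$. The harder piece is turning the forced lower bound ${\rm pd}_\Gamma F(K_{i+1}) = s$ into an actual numerical contradiction, since the intermediate bounds admit consistent solutions such as $r = i + s + 1$. Pushing through the contradiction likely requires a finer analysis of the composition factors of $F(K_j)$ beyond the crude projective-dimension invariant, or an independent bound coming from the finite length of $A$ as a module over itself.
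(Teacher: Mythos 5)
Your first two paragraphs are essentially correct: the Peirce-level identity $F(P)\,{\rm rad}(\Gamma)=P\,{\rm rad}(A)(1-e)$ for $P\in{\rm add}((1-e)A)$ does hold, so $F$ carries the minimal resolution of $K_1$ to a minimal resolution of $eA(1-e)$ through the relevant degrees, and the short-exact-sequence bookkeeping then forces ${\rm pd}_\Gamma F(K_{i+1})=s$. But, as you yourself concede, at that point the argument stalls. This is a genuine gap, not a detail to fill in: crude projective-dimension comparisons on the remaining syzygies admit consistent solutions (you correctly note $r=i+s+1$), and neither composition-factor counts nor the finite length of $A$ supplies the missing rigidity in any obvious way. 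What the paper actually uses is a structural lemma (Lemma \ref{MinResolution2}, and Lemma \ref{LemmaLiftingGraded} in the graded case), proved via Engel's theorem applied to a Lie algebra of nilpotent endomorphisms of the tops of the projectives: a radical morphism $f\colon M^s\to M^t$, lifted to minimal projective resolutions, never produces a section in any degree. This is the nontrivial new ingredient that converts the lower bound ${\rm pd}_\Gamma F(K_{i+1})\ge s$ into an actual contradiction, and it is of a different character than what you suggest.

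There is a second, structural obstacle with your plan: your pivot is the \emph{first} index $i$ with $\Ext^i_A(S_e,S_e)\ne 0$, so only $P_i$ (and not $P_{i-1}$) has $eA$-summands. The paper instead pivots at the \emph{last} nonzero index $d=d_e(S_e)$ and uses Lemma \ref{technical} (itself a consequence of Lemma \ref{cruciallemma}) to guarantee $\Ext^{d-1}_A(S_e,S_e)\ne 0$ as well, so that \emph{both} $P_d$ and $P_{d-1}$ contain positive powers of $eA$. It is precisely the resulting radical component $F(eA)^p\to F(eA)^q$ of the induced map that Lemma \ref{MinResolution2} can see; applied to lifts of this component along the minimal resolution of $F(eA)$, it shows the tail map of the horseshoe resolution is not a section, forcing ${\rm pd}_\Gamma F(L_{d-1})=s+1$ and, iterating, ${\rm pd}_\Gamma F(eA)=s+d-1\ne s$. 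At your pivot $i$ there is no pair of consecutive $eA$-summands and hence no such radical map between powers of $F(eA)$, so even granting yourself Lemma \ref{MinResolution2} the argument would not go through as organized. In short: the opening moves are fine, but the diagnosis you reach at the end is correct -- a genuinely new lemma is needed -- and the one the paper supplies requires reorganizing the argument around the top of the self-extension range rather than the bottom.
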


When $A$ is not finite dimensional, we simply have to add the vanishing condition $\Ext_A^1(S_e,S_e)=0$, and we get the following conjecture.

\begin{Conj} \label{conj3infinite}
Let $A$ be a Noetherian $k$-algebra which is either semiperfect or positively graded. Assume that $e$ is primitive (and of degree zero if $A$ is positively graded) with $\Ext_A^1(S_e,S_e)=0$. If both ${\rm pd}_A S_e$ and ${\rm pd}_\Gamma (eA(1-e))$ are finite, then $S_e$ is self-orthogonal.
\end{Conj}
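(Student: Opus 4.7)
I will proceed by contradiction and exploit the interaction of the exact functor $F:\C(A)\to\C(\Gamma)$ with the two finiteness hypotheses $r := {\rm pd}_A S_e < \infty$ and $s := {\rm pd}_\Gamma eA(1-e) < \infty$. Suppose the conclusion fails and let $k \ge 2$ be minimal such that $\Ext^k_A(S_e, S_e) \ne 0$. In the minimal projective resolution $\cdots \to P_j \to \cdots \to P_0 \to S_e \to 0$ over $A$, minimality of $k$ combined with $\Ext^1_A(S_e, S_e) = 0$ forces $P_j \in {\rm add}((1-e)A)$ for $1 \le j \le k-1$, while $P_k = eA^{a_k} \oplus Q_k$ with $a_k \ge 1$ and $Q_k \in {\rm add}((1-e)A)$.

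Applying the exact functor $F$ and using $F(S_e) = 0$ yields an acyclic complex in $\C(\Gamma)$. By Propositions \ref{HomologicalProp} and \ref{HomologicalPropGraded}, the terms $F(P_j)$ for $0 \le j \le k-1$ are projective, the differentials between them are radical, and hence this part of the complex realizes the beginning of a minimal projective resolution of $F(eA) = eA(1-e)$. In particular $\Omega_\Gamma^{k-1}(eA(1-e)) \cong F(\Omega^k S_e)$, and comparison of tops identifies $F(Q_k)$ as its projective cover. Applying $F$ to $0 \to \Omega^{k+1}S_e \to P_k \to \Omega^k S_e \to 0$ and chasing through the decomposition $F(P_k) = eA(1-e)^{a_k} \oplus F(Q_k)$ produces the short exact sequence
\[ 0 \to \Omega_\Gamma^k(eA(1-e)) \to F(\Omega^{k+1}S_e) \to eA(1-e)^{a_k} \to 0 \]
in $\C(\Gamma)$. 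A direct projective-dimension calculation in this sequence, using $a_k \ge 1$ and $k \ge 2$, forces ${\rm pd}_\Gamma F(\Omega^{k+1}S_e) = s$.

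To produce a contradiction I would combine this with Proposition \ref{firstprop} applied to $M = \Omega^{k+1}S_e$. Iterating the above construction at each index $k' > k$ with $a_{k'} \ne 0$ and applying the ``moreover'' part of Proposition \ref{firstprop} at the \emph{maximal} such index (where $d_e(\Omega^{k'+1}S_e) = -1$), one extracts the rigid numerical identity $r = s + k' + 1 \ge s + 3$. To produce a contradiction I then propose to obtain an independent upper bound $r \le s+1$ on ${\rm pd}_A S_e$ under the additional assumption that $A/{\rm rad}(A)$ is finite dimensional. A Nakayama-lemma argument on $\bar A := A/A(1-e)A$ (whose radical satisfies ${\rm rad}(\bar A) = {\rm rad}(\bar A)^2$ by the identification $\Ext^1_A(S_e, S_e) = \Ext^1_{\bar A}(S_e,S_e) = {\rm rad}(\bar A)/{\rm rad}(\bar A)^2$) forces $\bar A \cong k$, so $eA(1-e)A = e\cdot {\rm rad}(A)$; one may then splice a minimal $\Gamma$-projective resolution $R_\bullet \to eA(1-e)$ (of length $s$, tensored with $(1-e)A$ to produce objects of ${\rm add}((1-e)A)$) with the short exact sequence $0 \to e\cdot {\rm rad}(A) \to eA \to S_e \to 0$ to produce a length-$(s+1)$ candidate resolution.

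The main obstacle is verifying that this constructed complex is genuinely exact, which reduces to showing that ${\rm Tor}^\Gamma_i(eA(1-e), (1-e)A) = 0$ for $i \ge 1$. A double-complex spectral sequence comparing the original minimal $A$-resolution of $S_e$ with the constructed one should identify these obstructions with precisely the higher $\Ext^i_A(S_e, S_e)$, so that the assumed non-vanishing forces inconsistency via the above dimension count. Making this spectral sequence argument rigorous, handling the case of several non-zero $a_j$ with $j \ge 2$ by a careful iteration, and unifying the semiperfect and positively graded cases (via Proposition \ref{PropFiltered} to move between filtered and graded resolutions) is the principal technical hurdle.
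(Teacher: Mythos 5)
Your proposal takes a genuinely different route from the paper, which proves the conjecture (Theorems \ref{MainTheo} and \ref{MainTheo2}) by a careful syzygy-by-syzygy analysis (Lemmas \ref{cruciallemma}, \ref{technical}, \ref{NotProjective}) culminating in the lifting Lemma \ref{MinResolution2} (in the semiperfect case, via Engel's theorem) or Lemma \ref{LemmaLiftingGraded} (via the grading). Your plan instead is to extract a numerical identity $r = s + k' + 1 \ge s+3$ from applying $F$ to the $A$-resolution of $S_e$, and separately to prove $r \le s+1$ by tensoring a $\Gamma$-resolution of $eA(1-e)$ up to $A$, then derive a contradiction. This would bypass the Engel/lifting argument entirely. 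However, there are two genuine gaps.

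First, the short exact sequence $0 \to \Omega_\Gamma^k(eA(1-e)) \to F(\Omega^{k+1}S_e) \to eA(1-e)^{a_k} \to 0$ is not justified. It rests on the assertion that $F(Q_k) \to F(\Omega^k S_e) = \Omega^{k-1}_\Gamma(eA(1-e))$ is a projective cover, hence in particular surjective. But the original cover $(f,g): eA^{a_k}\oplus Q_k \to \Omega^k S_e$ need not restrict to a surjection from $Q_k$ alone (indeed, minimality says it \emph{cannot} if $a_k \ge 1$); the cokernel $C$ of $g$ has top in $\mathrm{add}(S_e)$, but nothing forces $C\in\mathrm{add}(S_e)$, so $F(C)$ need not vanish. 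Without this, the claimed identity $r = s + d + 1$ is unsupported. It is telling that, if this identity were a theorem, it together with Lemma~\ref{cruciallemma} would already settle the conjecture without any finite-dimensionality hypothesis on $A/\mathrm{rad}\,A$ and without the Engel argument — yet the paper needs that hypothesis, so some input you are not seeing must be hiding here.

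Second, the proposed bound $r\le s+1$ hinges on $\mathrm{Tor}^\Gamma_i(eA(1-e),(1-e)A)=0$ for $i\ge 1$, which you acknowledge is the main obstacle, and the spectral-sequence sketch is too vague to be checked (and risks circularity, since these Tor groups are governed by exactly the $\Ext$-data whose vanishing you are trying to establish). Note that the paper proves $s\ge r-1$ (Lemma~\ref{cruciallemma}) \emph{without} assuming $A/\mathrm{rad}\,A$ is finite dimensional, by a short-exact-sequence/syzygy argument rather than by Tor; the finite-dimensionality hypothesis enters only in Lemma~\ref{MinResolution2}, which is absent from your proposal. So the assumption is placed in the wrong spot in your outline, and the central technical lemma of the paper — that a lifting of a radical endomorphism to a minimal projective resolution can never be a section — has no analogue in your argument. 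I would encourage you to compare your step (a) with the proof of Lemma~\ref{cruciallemma} and your contradiction mechanism with the use of Lemma~\ref{MinResolution2} in Theorem~\ref{MainTheo}.
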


\begin{Prop}[Assuming Conj. \ref{conj3infinite}] \label{conj1infinite}
Let $A$ be a Noetherian $k$-algebra which is either semiperfect or positively graded. Let $e$ be primitive (and of degree zero if $A$ is positively graded) with $\Ext_A^1(S_e,S_e)=0$. If the global dimension of $A$ and $\Gamma$ are finite, then $S_e$ is self-orthogonal.
\end{Prop}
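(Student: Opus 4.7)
The plan is to deduce the statement directly from Conjecture \ref{conj3infinite}, so the task reduces to verifying the two finiteness hypotheses ${\rm pd}_A S_e < \infty$ and ${\rm pd}_\Gamma (eA(1-e)) < \infty$ that the conjecture requires, starting from the assumption that both ${\rm gl.dim}\,A$ and ${\rm gl.dim}\,\Gamma$ are finite. The other hypotheses on $e$ (primitive, of degree zero in the graded case) and on $\Ext_A^1(S_e,S_e)=0$ are already shared between the two statements and need no further work.

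The bound ${\rm pd}_A S_e \le {\rm gl.dim}\,A < \infty$ is immediate, since $S_e$ is an object of $\C(A)$. For the second bound, I would identify $eA(1-e)$ with $F(eA) = \Hom_A((1-e)A, eA)$ as a right $\Gamma$-module (and, in the graded case, as a graded right $\Gamma$-module) via the standard evaluation-at-$(1-e)$ isomorphism, observing that the right action of $\End_A((1-e)A) = \Gamma$ by precomposition corresponds under this identification to the natural right multiplication of $\Gamma$ on $eA(1-e)$. Since $eA \in \C(A)$ and the functor $F:\C(A) \to \C(\Gamma)$ of Propositions \ref{HomologicalProp} and \ref{HomologicalPropGraded} sends finitely generated (graded) modules to finitely generated (graded) modules, the module $eA(1-e)$ lies in $\C(\Gamma)$. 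Consequently ${\rm pd}_\Gamma(eA(1-e)) \le {\rm gl.dim}\,\Gamma < \infty$, where in the positively graded case we use, as recorded in Section \ref{sectionNoethGraded}, that the global dimension of $\Gamma$ (as a positively graded Noetherian algebra) bounds the projective dimension of any object of ${\rm gr}\,\Gamma$.

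With both finiteness hypotheses now in hand, Conjecture \ref{conj3infinite} applies and yields $\Ext_A^i(S_e,S_e) = 0$ for every $i > 0$, which is exactly the self-orthogonality of $S_e$. Since the argument is a pure verification of hypotheses, there is no real obstacle here beyond the identification $F(eA) \cong eA(1-e)$; all the genuine difficulty is concentrated in Conjecture \ref{conj3infinite} itself, whose proof is independent of the present deduction.
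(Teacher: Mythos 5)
Your proof is correct and is the natural (indeed, the only sensible) deduction: both finiteness hypotheses of Conjecture \ref{conj3infinite} are verified by the global dimension bounds, together with the identification $F(eA)=\Hom_A((1-e)A,eA)\cong eA(1-e)$ as right $\Gamma$-modules. The paper leaves this proposition without an explicit proof precisely because the deduction is this routine, so there is no divergence from the intended argument.
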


\begin{Prop}[Assuming Conj. \ref{conj3infinite}] \label{conj2infinite}
Let $A$ be a Noetherian $k$-algebra which is either semiperfect or positively graded.  Let $e$ be primitive (and of degree zero if $A$ is positively graded). Any two of the following imply the third.
\begin{enumerate}[$(1)$]
    \item ${\rm gl.dim} \Gamma < \infty$,
    \item ${\rm gl.dim} A < \infty$ and $\Ext_A^1(S_e,S_e)=0$,
    \item $S_e$ is self-orthogonal.
\end{enumerate}
\end{Prop}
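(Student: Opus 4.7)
The plan is to verify separately each of the three implications (2)+(3)$\Rightarrow$(1), (1)+(3)$\Rightarrow$(2), and (1)+(2)$\Rightarrow$(3). Each reduces immediately to a result already established in this section, with only the last actually invoking Conjecture \ref{conj3infinite}.

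Before starting, I would record one preparatory observation: the $\Gamma$-module $eA(1-e)$ appearing in Conjecture \ref{conj3infinite} is nothing but $F(eA)$. Indeed, the evaluation-at-$(1-e)$ map gives a canonical right $\Gamma$-module isomorphism $\Hom_A((1-e)A,eA)\cong eA(1-e)$. Since $eA\in\C(A)$ and $F$ carries $\C(A)$ into $\C(\Gamma)$, the module $eA(1-e)$ lies in $\C(\Gamma)$, so ${\rm pd}_\Gamma(eA(1-e))\leq {\rm gl.dim}\,\Gamma$.

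For (2)+(3)$\Rightarrow$(1), I would apply Proposition \ref{prop1}(3) directly: self-orthogonality of $S_e$ together with ${\rm gl.dim}\,A<\infty$ forces ${\rm gl.dim}\,\Gamma<\infty$. For (1)+(3)$\Rightarrow$(2), I would use the Corollary just preceding the Theorem, which under self-orthogonality gives ${\rm gl.dim}\,A\leq 2\,{\rm gl.dim}\,\Gamma+2$; combined with (1) this yields ${\rm gl.dim}\,A<\infty$, while $\Ext^1_A(S_e,S_e)=0$ is part of the definition of self-orthogonality.

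The last implication (1)+(2)$\Rightarrow$(3) is the only one of real substance and is precisely Proposition \ref{conj1infinite}. To invoke it (and hence Conjecture \ref{conj3infinite}) one must verify ${\rm pd}_A S_e<\infty$ and ${\rm pd}_\Gamma(eA(1-e))<\infty$; the former is immediate from ${\rm gl.dim}\,A<\infty$, and the latter follows from ${\rm gl.dim}\,\Gamma<\infty$ by the preparatory observation. The only genuine obstacle thus sits inside Conjecture \ref{conj3infinite} itself; the proposition is otherwise a piece of bookkeeping assembling results already proven.
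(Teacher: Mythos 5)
Your proof is correct and assembles exactly the results the paper sets up for this purpose: Proposition \ref{prop1}(3) handles (2)+(3)$\Rightarrow$(1), the corollary bounding ${\rm gl.dim}\,A\le 2\,{\rm gl.dim}\,\Gamma+2$ handles (1)+(3)$\Rightarrow$(2), and Conjecture \ref{conj3infinite} with the identification $eA(1-e)\cong F(eA)$ handles (1)+(2)$\Rightarrow$(3). The paper leaves this proposition without an explicit proof, and your argument is the intended one.
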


The rest of the paper is devoted to proving Conjecture \ref{conj3} and, with the additional assumption that $A/{\rm rad}A$ is finite dimensional, Conjecture \ref{conj3infinite}. Note that this additional assumption holds when $A$ is positively graded, but does not necessarily hold when $A$ is semiperfect.

\section{Main tools} \label{sectionSmallHomDim}

In this section, all algebras considered are Noetherian $k$-algebras and are either semiperfect or positively graded. We provide useful tools for proving Conjecture \ref{conj3infinite}. In this section, $e$ is always assumed to be primitive, and is of degree zero if $A$ is positively graded.
The following two lemmas will be our main tools in the rest of this paper.

\begin{Lemma} \label{cruciallemma}
Assume $\Ext_A^1(S_e,S_e)=0$, ${\rm pd}_A S_e < \infty$ and ${\rm pd}_\Gamma F(eA) < \infty$. Then ${\rm pd}_\Gamma F(eA) \ge {\rm pd}_A S_e-1$ and $d_e(S_e) \le {\rm max}(0,{\rm pd}_A S_e - 2)$.
\end{Lemma}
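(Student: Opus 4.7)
The main tool will be Proposition \ref{firstprop} applied to the first syzygy $\Omega$ of $S_e$, combined with information about higher syzygies $\Omega_j$. Since $\Ext^1_A(S_e,S_e)=0$, I first observe that the projective cover $P_1 \twoheadrightarrow \Omega$ has $P_1 \in \mathrm{add}((1-e)A)$, so applying $F$ to the short exact sequence $0 \to \Omega \to eA \to S_e \to 0$ (together with $F(S_e)=0$) gives the crucial isomorphism $F(\Omega) \cong F(eA)$; in particular ${\rm pd}_\Gamma F(\Omega) = s$, where $s := {\rm pd}_\Gamma F(eA)$ and $n := {\rm pd}_A S_e$.

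Next I exploit the long-exact $\Ext$-sequence together with the projectivity of $eA$ to get the dimension-shift $\Ext^i_A(\Omega,S_e) \cong \Ext^{i+1}_A(S_e,S_e)$ for $i \ge 1$, and $\Hom_A(\Omega,S_e)=0$ (the latter from $\Ext^1_A(S_e,S_e)=0$ combined with the equality $\dim\Hom_A(eA,S_e) = 1 = \dim\Hom_A(S_e,S_e)$). Writing $d := d_e(S_e)$, this gives $d_e(\Omega) = d-1$ when $d \ge 2$ and $d_e(\Omega)=-1$ when $d \le 0$ (the value $d=1$ is excluded by hypothesis). Applying Proposition \ref{firstprop} to $M=\Omega$ with ${\rm pd}_A\Omega = n-1$ yields
$$s = {\rm pd}_\Gamma F(\Omega) \le \max(d_e(\Omega)+s,\, n-1),$$
and the ``moreover'' clause forces the equality $s = n-1$ as soon as $d_e(\Omega) = -1$ or $d_e(\Omega)+s < n-2$. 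In particular, if $d=0$ then $d_e(\Omega)=-1$ and so $s = n-1$, which already establishes both conclusions of the lemma in this case (since $d=0 \le \max(0,n-2)$).

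For the remaining case $d \ge 2$, I pass to the $(d+1)$-st syzygy $\Omega_{d+1}$. Iterating the dimension shift shows $\Ext^i_A(\Omega_{d+1},S_e)=0$ for all $i \ge 0$, so by Proposition \ref{HomologicalProp}(4) (or its graded analogue Proposition \ref{HomologicalPropGraded}(4)) applying $F$ to a minimal projective resolution of $\Omega_{d+1}$ gives a minimal projective resolution of $F(\Omega_{d+1})$, so ${\rm pd}_\Gamma F(\Omega_{d+1}) = {\rm pd}_A\Omega_{d+1} = n-d-1$ exactly. I then propagate this pd back to $F(\Omega_1) = F(eA)$ using the short exact sequences $0 \to F(\Omega_{j+1}) \to F(P_j) \to F(\Omega_j) \to 0$ for $j = d, d-1, \ldots, 1$, combined with the bound ${\rm pd}_\Gamma F(P_j) \le s$ (with equality ${\rm pd}_\Gamma F(P_d) = s$ forced by $a_d > 0$, since $a_d$ equals the multiplicity of $eA$ in $P_d$ and the summand $F(eA)^{a_d}$ has pd $s$).

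The main obstacle is carefully tracking how ${\rm pd}_\Gamma F(\Omega_j)$ evolves through these short exact sequences: the standard pd inequality gives a strict jump ${\rm pd}_\Gamma F(\Omega_j) = {\rm pd}_\Gamma F(\Omega_{j+1})+1$ whenever ${\rm pd}_\Gamma F(\Omega_{j+1}) > {\rm pd}_\Gamma F(P_j)$, so a case analysis on the distribution of nonzero multiplicities $a_j$ is needed. The point is that starting from ${\rm pd}_\Gamma F(\Omega_{d+1}) = n-d-1$, if $d \ge n-1$ then the pd's would be forced to climb all the way up to ${\rm pd}_\Gamma F(\Omega_1) \ge s + (n-1-d') > s$ for some critical index $d'$, contradicting the identity ${\rm pd}_\Gamma F(\Omega_1) = s$; this simultaneously rules out $d \ge n-1$ (giving $d \le n-2$) and forces $s \ge n-1$, completing the argument.
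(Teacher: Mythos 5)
Your opening moves are sound: the identification $F(\Omega) \cong F(eA)$, the dimension shift $\Ext^i_A(\Omega,S_e) \cong \Ext^{i+1}_A(S_e,S_e)$ together with $\Hom_A(\Omega,S_e)=0$, the exclusion of $d_e(S_e)=1$, the treatment of the case $d_e(S_e)=0$ via Proposition~\ref{firstprop}, and the equality ${\rm pd}_\Gamma F(\Omega_{d+1}) = n-d-1$ are all correct. The genuine gap is in the propagation step for $d := d_e(S_e) \ge 2$. You apply $F$ to $0 \to \Omega_{j+1} \to P_j \to \Omega_j \to 0$ and track ${\rm pd}_\Gamma F(\Omega_j)$ as $j$ decreases, but the middle term $F(P_j)$ is \emph{not} projective over $\Gamma$ whenever $eA$ occurs in $P_j$, so $F(\Omega_{j+1})$ is not a $\Gamma$-syzygy of $F(\Omega_j)$ and you are reduced to the bare-hands projective-dimension inequalities for a three-term sequence. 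Those give a forced jump only when ${\rm pd}_\Gamma F(\Omega_{j+1}) > {\rm pd}_\Gamma F(P_j)$; when the two are equal, ${\rm pd}_\Gamma F(\Omega_j)$ is uncontrolled and can drop. Concretely, after the first step you get ${\rm pd}_\Gamma F(\Omega_d) = s$ provided $n-d-1 < s$, but if $a_{d-1}>0$ (which is not ruled out at this point in the paper --- Lemma~\ref{technical}, which forces $\Ext^{d-1}_A(S_e,S_e)\neq 0$, is proved \emph{after} and depends on this lemma) then ${\rm pd}_\Gamma F(P_{d-1}) = s$ as well, the climb can stall, and the intended contradiction from the terminal identity ${\rm pd}_\Gamma F(\Omega_1) = s$ never materializes. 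The ``case analysis on the $a_j$'' you allude to therefore does not close.

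The paper avoids exactly this difficulty by replacing the syzygies $\Omega_j$ with a different tower: $M_0=\Omega_2$, $M_{i+1}=K_{M_i}$, where $K_M$ is the kernel of a projective cover of the submodule $M' \subseteq M$ in the sequence $0 \to M' \to M \to S_M \to 0$ with $S_M \in \mathrm{add}(S_e)$ and $\Hom_A(M',S_e)=0$. The condition $\Hom_A(M',S_e)=0$ expels $eA$ from the projective cover of $M'$, so after applying $F$ one gets \emph{genuine} $\Gamma$-syzygies of $F(eA)$: $F(M_i)$ is the $(i+1)$-st syzygy of $F(eA)$, hence $F(M_{m-1})$ is $\Gamma$-projective when $m = {\rm pd}_\Gamma F(eA)$. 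The forward bound ${\rm pd}_A M_i \le r-2$ and the subsequent reverse induction on ${\rm pd}_A M_i$ then deliver both ${\rm pd}_\Gamma F(eA) \ge {\rm pd}_A S_e-1$ and $d_e(S_e) \le {\rm pd}_A S_e -2$ (via the Horseshoe construction at the index $m-r$). Your sketch does not supply a substitute for this control, so as written it does not prove the lemma.
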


\begin{proof}
We start with an easy observation. Let $M \in \C(A)$. Then there exists a short exact sequence
$$\eta_M: \;\; 0 \to M' \to M \to S_M \to 0$$
in $\C(A)$ where $S_M \in {\rm add}(S_e)$ is such that $\Hom_A(M', S_e)=0$. Let $P \to M' \to 0$ be a projective cover in $\C(A)$ with kernel $K_M$. Then $F(P) \to F(M') = F(M)$ is a projective cover in $\C(\Gamma)$ and hence, the first syzygy of $F(M)$ is $F(K_M)$. Now, let ${\rm pd}_\Gamma F(eA) = m$ and ${\rm pd}_A S_e = r$, where we may assume $r \ge 2$. If $m=0$, then the second syzygy of $S_e$ lies in add$(S_e)$. Since pd$_A S_e$ is finite, this means that $r \le 1$. Hence, we may assume $m > 0$.  Consider a minimal projective presentation $P \to eA \to S_e \to 0$ in $\C(A)$ of $S_e$ where we know that (a shift of) $eA$ is not a direct summand of $P$. Let the second syzygy of $S_e$ be $M_0$, so pd$_A M_0 = r-2$ and $M_0 \ne 0$. If $S_{M_0} \ne 0$, then  pd$_A M_0' = r-1$ and otherwise, $M_0' = M_0$ and pd$_A M_0' = r-2$. Let $M_1 = K_{M_0}$, so pd$_A M_1 = r-2$ if $S_{M_0} \ne 0$ and pd$_A M_1 = r-3$ if $S_{M_0} = 0$. For $1 \le i \le m$, we let $M_{i} = K_{M_{i-1}}$. We can prove by induction that pd$_A M_i \le r-2$ and pd$_A M_i' \le r-1$ for $0 \le i \le m-1$. At the last step, $F(M_{m-1})$ is projective in $\C(\Gamma)$, hence $M_m=K_{M_{m-1}} \in {\rm add}(S_e)$. But pd$_A K_{M_{m-1}} = {\rm pd}_A M'_{m-1}-1 \le r-2$. Thus, $K_{M_{m-1}} = 0$ and hence, $M'_{m-1}$ is projective in $\C(A)$. Since $r > 0$, this gives $S_{M_{m-1}}=0$ so $M_{m-1} = M'_{m-1}$. We can prove using another (reverse) induction that pd$_A M_i = m-1 - i$ and $M_i = M'_i$ for $i = m-1, m-2, \ldots, m-r+1$. In particular, $m-r+1 \ge 0$. This proves the first part of the proposition. If $m-r+1 = 0$, then $M_0$ is such that $\Ext^i_A(M_0, S_e)=0$ for all $i \ge 0$ and thus, $S_e$ is self-orthogonal and this proves the second part of the proposition in this case. Assume $m-r+1 > 0$. This gives pd$_A M'_{m-r} = r-1$. Now, the exact sequence
$$\eta_{M_{m-r}}: \;\; 0 \to M'_{m-r} \to M_{m-r} \to S_{M_{m-r}} \to 0$$
with $S_{M_{m-r}} \in {\rm add}(S_e)$ gives $S_{M_{m-r}} \ne 0$ since otherwise, pd$_A M_{m-r} = r-1$, contrary to what we have proven so far. Observe that $\Ext^i_A(M'_{m-r}, S_e)=0$ for all $i \ge 0$. Using $\eta_{M_{m-r}}$ together with Lemma \ref{ConstructionProjResol}, we see that we can get a projective resolution
$$0 \to Q_r \to Q_{r-1} \to \cdots \to Q_0 \to S_{M_{m-r}} \to 0$$
in $\C(A)$ such that $Q_r, Q_{r-1}$ are the last two terms in a minimal projective resolution of $M'_{m-r}$. Thus, (a shift of) $eA$ is not a direct summand of $Q_r \oplus Q_{r-1}$. Since the latter resolution is of minimal length, this gives $\Ext^i_A(S_e, S_e)=0$ for $i=r-1, r$.
\end{proof}

\begin{Lemma} \label{technical}
Suppose that ${\rm pd}_A S_e < \infty$, $\Ext^1_A(S_e,S_e)=0$ and ${\rm pd}_\Gamma F(eA) < \infty$. If $S_e$ is not self-orthogonal then $\Ext^{d_e(S_e)-1}_A(S_e,S_e)\ne 0$.
\end{Lemma}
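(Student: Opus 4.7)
The plan is to argue by contradiction: assume $\Ext^{d-1}_A(S_e,S_e)=0$, where $d := d_e(S_e) \geq 2$ (this lower bound comes from $S_e$ not being self-orthogonal together with the hypothesis $\Ext^1_A(S_e,S_e)=0$), and derive a numerical identity that forces $d=1$. The key idea is that, under this contradiction hypothesis, a ``gap'' at position $d-1$ in the minimal projective resolution of $S_e$ makes the projective dimensions of $F$ applied to the syzygies $K_i := \Omega^i(S_e)$ grow strictly as $i$ decreases through $d,d-1,\ldots,1$; but $F(K_1)$ turns out to be isomorphic to $F(eA)$, which has fixed projective dimension $m := {\rm pd}_\Gamma F(eA)$.

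I would set $r := {\rm pd}_A S_e$ and fix a minimal projective resolution $\cdots \to P_i \to \cdots \to P_0 \to S_e \to 0$ in $\C(A)$. The multiplicity of (a shift of) $eA$ in $P_i$ equals $a_i := \dim \Ext^i_A(S_e,S_e)$, and the hypotheses give $a_1 = a_{d-1} = 0$, $a_d > 0$, and $a_i=0$ for $i > d$. Applying exact $F$ to $0 \to K_1 \to eA \to S_e \to 0$ and using $F(S_e)=0$ yields $F(K_1) \cong F(eA)$, hence ${\rm pd}_\Gamma F(K_1) = m$. Proposition \ref{HomologicalProp}(4) (or \ref{HomologicalPropGraded}(4)) applied to $K_{d+1}$, whose Ext groups into $S_e$ all vanish, gives ${\rm pd}_\Gamma F(K_{d+1}) = {\rm pd}_A K_{d+1} = r-d-1$, and Lemma \ref{cruciallemma} provides $m \geq r-1$, so $m > r-d-1$ for $d \geq 2$.

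Next, I would recursively compute ${\rm pd}_\Gamma F(K_i)$ from the short exact sequences $0 \to F(K_{i+1}) \to F(P_i) \to F(K_i) \to 0$ obtained by applying exact $F$ to $0 \to K_{i+1} \to P_i \to K_i \to 0$. The crucial uniform bound is ${\rm pd}_\Gamma F(P_i) \leq m$ for every $i$: if $a_i = 0$ then $F(P_i)$ is projective, and if $a_i > 0$ then $F(P_i)$ contains $F(eA)^{a_i}$ as a summand, so its projective dimension is exactly $m$. The standard short-exact-sequence comparison for projective dimension then gives, at the base step $i=d$, the value ${\rm pd}_\Gamma F(K_d) = m$ (since ${\rm pd}_\Gamma F(K_{d+1}) < m = {\rm pd}_\Gamma F(P_d)$); at $i=d-1$, using $a_{d-1}=0$ so that ${\rm pd}_\Gamma F(P_{d-1})=0$, one gets ${\rm pd}_\Gamma F(K_{d-1}) = m+1$; and from then on the cascade ${\rm pd}_\Gamma F(K_i) = {\rm pd}_\Gamma F(K_{i+1})+1$ continues automatically, because once ${\rm pd}_\Gamma F(K_{i+1})$ strictly exceeds $m$ it automatically strictly exceeds ${\rm pd}_\Gamma F(P_i)$.

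Iterating down to $i=1$ produces ${\rm pd}_\Gamma F(K_1) = m + (d-1)$, which combined with the earlier equality ${\rm pd}_\Gamma F(K_1) = m$ forces $d=1$, contradicting $d \geq 2$. The step I expect to require the most care is verifying the strict jump ${\rm pd}_\Gamma F(K_i) = {\rm pd}_\Gamma F(K_{i+1})+1$ at every stage of the descent: it relies on both the uniform bound ${\rm pd}_\Gamma F(P_i) \leq m$ and the initial separation ${\rm pd}_\Gamma F(K_{d+1}) < m$ supplied by Lemma \ref{cruciallemma}, so that the short-exact-sequence dimension inequalities always break in the same direction and no ambiguous case arises.
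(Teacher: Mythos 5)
Your proof is correct and follows essentially the same strategy as the paper: compute ${\rm pd}_\Gamma F$ of the syzygies $\Omega^i(S_e)$, use the extra vanishing $\Ext^{d-1}_A(S_e,S_e)=0$ to force a strict increment at $i=d-1$, and let the cascade of projective dimensions contradict ${\rm pd}_\Gamma F(\Omega(S_e)) = {\rm pd}_\Gamma F(eA)$. The only cosmetic difference is that you compute projective dimensions directly from the short exact sequence comparison rule, whereas the paper packages the same comparison via the Horseshoe-type Lemma \ref{ConstructionProjResol}; both reduce to the same bookkeeping, and your identification $F(\Omega(S_e)) \cong F(eA)$ is exactly the endpoint the paper reaches as $F(M_{-d+1})$.
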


\begin{proof}
Suppose that ${\rm pd}_\Gamma F(eA)=r$.
Let $M_i$ in $\C(A)$ be the $(d_e(S_e)+i)$-syzygy of $S_e$.  We know that $M_1$ is nonzero by Lemma \ref{cruciallemma}. We have a short exact sequence
$$(*): \quad 0 \to F(M_1) \to Q \oplus R \to F(M_0) \to 0,$$
in $\C(\Gamma)$ where $Q$ is a projective object in $\C(\Gamma)$ and $R$ is nonzero in add$(F(eA))$. By assumption, $d_e(S_e) > 0$, and hence $d_e(S_e) \ge 2$. Suppose to the contrary that $\Ext^{d_e(S_e)-1}_A(S_e,S_e)$ vanishes.  Set $t = {\rm pd}_A S_e - d_e(s_e)-1 \ge 1$, which is the projective dimension of $F(M_1)$. By Lemma \ref{cruciallemma}, we know that $r \ge {\rm pd}_A S_e -1 \ge t + 2$, since $d_e(S_e) \ge 2$. Hence, $t \le r-2$.  Let
$$0 \to P_t \to P_{s-1} \to \cdots \to P_1 \to P_0 \to F(M_1) \to 0$$
be a minimal projective resolution of $F(M_1)$ in $\C(\Gamma)$ and
$$0 \to Q_r \to Q_{r-1} \to \cdots \to Q_1 \to Q_0 \to R \to 0$$
be a minimal projective resolution of $R$ in $\C(\Gamma)$. By Lemma \ref{ConstructionProjResol}, we get a projective resolution
$$(**): \cdots \to Q_{t+2} \to Q_{t+1} \oplus P_t \to Q_t \oplus P_{t-1}\to \cdots \to Q_1\oplus P_0 \to Q\oplus Q_0\to F(M_0) \to 0$$
of $F(M_0)$ in $\C(\Gamma)$, where $Q_{t+1}$ is nonzero. If $t < r-2$, then $(**)$ is clearly of minimal length since the last map is the last map $Q_{r} \to Q_{r-1}$ of the minimal projective resolution of $F(eA)$. If $t = r-2$, the last map is $Q_{t+2} \to Q_{t+1} \oplus P_t$ whose image lies in $Q_{t+1}$.  Thus, this map is a radical map and hence, $(**)$ is of minimal length.
Therefore, in all cases, pd$_\Gamma F(M_0)=r$. Since $\Ext^{d_e(S_e)-1}_A(S_e,S_e)=0$, we have a short exact sequence
$$0 \to F(M_0) \to Q' \to F(M_{-1}) \to 0,$$
in $\C(\Gamma)$ where $Q'$ is projective in $\C(\Gamma)$. Hence, pd$_\Gamma F(M_{-1}) = r+1$. Now, since $r+1 > r$, by using an argument similar as in the first part of the proof, we get pd$_\Gamma F(M_{-2}) = r+2$. By induction, ${\rm pd}_\Gamma F(eA) = {\rm pd}_\Gamma F(M_{-d_e(S_e)+1})=r+d_e(S_e)-1$, which gives $d_e(S_e)=1$, a contradiction.
\end{proof}

\begin{Lemma} \label{NotProjective}
Suppose that ${\rm pd}_A S_e < \infty$, $\Ext_A^1(S_e,S_e)=0$ and $S_e$ is not self-orthogonal. If ${\rm pd}_\Gamma F(eA)$ is finite, then ${\rm pd}_\Gamma F(eA) \ge 3$ and ${\rm pd}_A S_e \ge 4$.
\end{Lemma}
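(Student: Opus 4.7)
My plan is to deduce this lemma directly from the two preceding results, Lemma \ref{cruciallemma} and Lemma \ref{technical}. All the genuine homological content already sits in those statements; what remains is a short chain of inequalities tracking how "not self-orthogonal" combines with the vanishing $\Ext^1_A(S_e,S_e)=0$ to push $d_e(S_e)$, and then $\mathrm{pd}_A S_e$ and $\mathrm{pd}_\Gamma F(eA)$, up far enough.

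I would start by showing that $d_e(S_e) \ge 3$. Since $S_e$ is not self-orthogonal, $d_e(S_e) \ge 1$, and because the hypothesis rules out $d_e(S_e)=1$, we have $d_e(S_e) \ge 2$. Now the hypotheses of Lemma \ref{technical} are all in place, so it produces $\Ext^{d_e(S_e)-1}_A(S_e,S_e) \ne 0$. If $d_e(S_e)$ were $2$, this would say $\Ext^1_A(S_e,S_e)\ne 0$, contradicting the assumption. Therefore $d_e(S_e) \ge 3$.

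To finish, I would feed this lower bound into Lemma \ref{cruciallemma}. Its second conclusion reads $d_e(S_e) \le \max(0,\mathrm{pd}_A S_e - 2)$; since $d_e(S_e) \ge 3 > 0$, this yields $\mathrm{pd}_A S_e \ge d_e(S_e) + 2 \ge 5$, and in particular $\mathrm{pd}_A S_e \ge 4$. Its first conclusion then gives $\mathrm{pd}_\Gamma F(eA) \ge \mathrm{pd}_A S_e - 1 \ge 4 \ge 3$, which is the remaining inequality. There is no real obstacle: the two preparatory lemmas have already done the work, and the only thing one must be careful with is the bookkeeping between "not self-orthogonal" (which only guarantees $d_e(S_e) \ge 1$) and the assumption $\Ext^1_A(S_e,S_e)=0$, which together with Lemma \ref{technical} promote this to $d_e(S_e) \ge 3$.
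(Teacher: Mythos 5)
Your proof is correct, and it in fact establishes slightly stronger bounds than the lemma claims, namely ${\rm pd}_A S_e \ge 5$ and ${\rm pd}_\Gamma F(eA) \ge 4$. The paper's own argument is a bit shorter: from $\Ext^1_A(S_e,S_e)=0$ and non-self-orthogonality it reads off directly that $d_e(S_e) \ge 2$ (and ${\rm pd}_A S_e \ge 2$), then feeds $d_e(S_e) \ge 2$ into the second conclusion of Lemma~\ref{cruciallemma} to get ${\rm pd}_A S_e \ge 4$, and into the first conclusion to get ${\rm pd}_\Gamma F(eA) \ge 3$ — Lemma~\ref{technical} is never invoked. Your extra appeal to Lemma~\ref{technical} to promote $d_e(S_e) \ge 2$ to $d_e(S_e) \ge 3$ is legitimate (all of its hypotheses hold under the given assumptions and it appears earlier in the paper, so there is no circularity), and it buys the sharper intermediate bound; but it is not needed for the stated inequalities. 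In both versions the essential engine is the pair of inequalities from Lemma~\ref{cruciallemma}, so the two arguments are variations on the same approach.
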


\begin{proof}
Assume that ${\rm pd}_\Gamma F(eA) < \infty$. Since $\Ext_A^1(S_e,S_e) = 0$ and $S_e$ is not self-orthogonal, we get pd$_A S_e, d_e(S_e) \ge 2$. From Lemma \ref{cruciallemma}, we have pd$_\Gamma F(eA) \ge {\rm pd}_A S_e -1$ and $d_e(S_e) \le {\rm max}(0,{\rm pd}_A S_e - 2)$. The second inequality gives ${\rm pd}_A S_e \ge 4$. Hence, it follows from the first inequality that ${\rm pd}_\Gamma F(eA) \ge 3$.
\end{proof}

\section{The conjecture}

In this section, $A$ is a Noetherian $k$-algebra which is either semiperfect with $A/{\rm rad}A$ finite dimensional or positively graded. Therefore, $A/{\rm rad}A$ is a product of copies of $k$ in both cases. The idempotent $e$ is always assumed to be primitive, and of degree zero if $A$ is positively graded. We prove Conjecture \ref{conj3infinite} in this setup.

\medskip

For a morphism $f: M \to N$ between finitely generated modules, we denote by $\bar f$ the induced morphism $\bar f: M/{\rm rad}M \to N/{\rm rad}N$ on the tops of $M,N$.
The following lemma is quite easy to prove in the finite dimensional setting, but it is not so obvious in our setting.

\begin{Lemma} \label{MinResolution1}
Assume that $A$ is semiperfect with $A/{\rm rad}A$ finite dimensional. Let $M \in \mmod A$ admitting a projective resolution
$$ 0 \to P_r \stackrel{d_r}{\longrightarrow} P_{r-1} \stackrel{d_{r-1}}{\longrightarrow} \cdots \stackrel{d_2}{\longrightarrow} P_1 \stackrel{d_1}{\longrightarrow} P_0 \stackrel{d_0}{\longrightarrow} M \to 0$$
which is minimal in $\mmod A$. Let $f: M \to M$ be a morphism in $\mmod A$ and $\{f_i: P_i \to P_i\}_{0 \le i \le r}$
a lifting of $f$, in $\mmod A$, to the above projective resolution of $M$. If $f$ is a radical morphism, then all of the $\bar f_i$ are nilpotent.
\end{Lemma}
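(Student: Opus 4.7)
The plan is to argue by contradiction using algebraic closedness of $k$ together with the homotopy invariance of the induced maps on the tops of the minimal resolution. Suppose some $\bar f_i$ is not nilpotent. Since $P_i$ is finitely generated and $A/{\rm rad}A$ is finite dimensional, $P_i/{\rm rad}P_i$ is a finite-dimensional $k$-vector space, so $\bar f_i$ has a nonzero eigenvalue $\lambda \in k$.

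The first step is to show that $f - \lambda\,{\rm id}_M$ is an automorphism of $M$. Because $f$ is a radical morphism, $\bar f = 0$ on $M/{\rm rad}M$, so the map induced by $f - \lambda\,{\rm id}_M$ on the top is $-\lambda I$, which is invertible. Nakayama's lemma then gives surjectivity of $f - \lambda\,{\rm id}_M$, and Noetherianity of $A$ (which makes $M$ Hopfian) upgrades surjectivity to an isomorphism.

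The next step is to lift this to the chain level. The chain map $f_\bullet - \lambda\,{\rm id}_{P_\bullet}\colon P_\bullet \to P_\bullet$ covers the automorphism $f - \lambda\,{\rm id}_M$ on $M$; since both source and target are bounded complexes of finitely generated projectives and the induced map on homology is an isomorphism, it is a homotopy equivalence. Let $g_\bullet$ be a homotopy inverse, so that $g_\bullet \circ (f_\bullet - \lambda\,{\rm id}_{P_\bullet}) - {\rm id}_{P_\bullet} = dh + hd$ for some chain homotopy $h = (h_i)$. Passing to the tops in degree $i$ yields
$$\bar g_i (\bar f_i - \lambda I) = I + \bar d_{i+1}\bar h_i + \bar h_{i-1}\bar d_i.$$
By minimality, every $d_j$ is a radical morphism, so $\bar d_j = 0$, and the right-hand side collapses to $I$. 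This makes $\bar f_i - \lambda I$ invertible in the finite-dimensional algebra $\End(P_i/{\rm rad}P_i)$, contradicting that $\lambda$ is an eigenvalue of $\bar f_i$.

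The main obstacle, and the reason the lemma requires a proof in the Noetherian semiperfect setting (unlike the finite dimensional case, where $f$ is automatically nilpotent), is that a radical endomorphism $f$ of $M$ need not itself be nilpotent; think of multiplication by a formal power series variable. The eigenvalue argument finesses this by working one scalar at a time rather than by iterating $f$, and it crucially uses minimality of the resolution to kill the homotopy correction terms on the semisimple tops.
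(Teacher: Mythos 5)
Your proof is correct, and it takes a genuinely different route from the paper. The paper's argument first establishes that $f$ is an isomorphism iff $f_0$ is (projective cover plus the Hopfian property of Noetherian modules), then iterates this through the syzygies, and then appeals to the radical of the additive category of projectives to show $1-af_0$ (hence all $1-af_i$) is invertible for every $a\in k$; writing each $[f_i]$ as a matrix $E_i+F_i$ with $E_i$ scalar-identity entries and $F_i$ radical, the invertibility of $I-aE_i$ for all $a$ forces $E_i=\bar f_i$ to be nilpotent. Your argument instead proceeds by contradiction through a single eigenvalue $\lambda$ of $\bar f_i$: you get $f-\lambda$ to be an automorphism directly from $\bar f=0$ via Nakayama plus Hopfian, upgrade this to a homotopy equivalence of the bounded projective resolution, and then use the chain-homotopy identity together with minimality ($\bar d_j=0$) to exhibit a left inverse of $\bar f_i-\lambda I$ in $\End(P_i/\mathrm{rad}\,P_i)$, a contradiction. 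Your route avoids both the iteration through syzygies and the appeal to the radical of the category of projectives, replacing them with a single use of the standard fact that a quasi-isomorphism between bounded-above complexes of projectives is a homotopy equivalence; the paper's route avoids homotopy theory at the cost of a somewhat more bare-hands matrix manipulation. Both use minimality at exactly the same point, namely to annihilate the differential-related terms after passing to tops.
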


\begin{proof}
Let us consider the projective cover $P_0 \stackrel{d_0}{\to} M \to 0$ of $M$ in $\mmod A$ with the lifting $f_0 : P_0 \to P_0$ of $f$ in $\mmod A$. Since $d_0$ is a projective cover, $\bar d_0$ is an isomorphism. By considering the diagram
$$\xymatrix{P_0 \ar[r]^{d_0} \ar[d]^{f_0} & M \ar[r] \ar[d]^f & 0 \\ P_0 \ar[r]^{d_0} & M \ar[r] & 0}$$
modulo the radical, we easily see that if $f$ is an isomorphism, then so is $\bar f$ and hence $\bar f_0$ is an isomorphism. This gives that $f_0$ is an isomorphism. Conversely, if $f_0$ is an isomorphism, then $f$ is surjective. Since $M$ is a Noetherian module, it is well known that surjectivity of $f$ implies injectivity of $f$. Hence, $f_0$ is an isomorphism if and only if $f$ is. By repeating the argument at the level of the kernel of $d_0$, we see that $f$ is an isomorphism if and only if all of the $f_i$ are isomorphisms.

Assume that $f$ is radical. Then $f_0$ is also a radical morphism. We claim that $1-af$ is invertible for all $a \in k$. By the above observation, it is sufficient to prove that $1-af_0$ is invertible for all $a \in k$.
Since the category $\mathcal{P}(A)$ of the projective objects in $\mmod A$ is Krull-Schmidt, it has a well defined radical $\mathcal{J}(A)$. For $P,Q$ indecomposable in $\mathcal{P}(A)$, $g: P \to Q$ lies in $\mathcal{J}(A)$ if and only if $g$ is a radical map. In general, if $g: P \to Q$ is represented by a matrix, then $g$ lies in $\mathcal{J}(A)$ if and only if each entry is in  $\mathcal{J}(A)$. By the properties of the radical of a category, $f_0$ being in $\mathcal{J}(A)$ means that $1 - hf_0$ is invertible for all morphisms $h : P_0 \to P_0$, and in particular, $1 - af_0$ is invertible for all $a \in k$. This proves our first claim. Since for $a \in k$, the morphisms $1-af, 1-af_0$ are invertible, it follows that for all $a \in k$ and all $i$, the morphisms $1-af_i$ are invertible.

Suppose that $P_i = \bigopls_{j=1}^{t_i}Q_{ij}$ where the $Q_{ij}$ are objects in the list $\{e_1A, \ldots, e_nA\}$. Let $[f_i]$ be the matrix of $f_i$ according to this decomposition. Since $k = \bar k$ and $A/{\rm rad}A$ is finite dimensional, each entry of $[f_i]$ can be written as a scalar times the identity (whenever this makes sense) plus a radical map. Hence, $[f_i] = E_i + F_i$, where $F_i$ is a matrix containing only radical maps and $E_i$ is a matrix containing scalar multiples of identities. The matrix of $1-af_i$ is $I-aE_i-aF_i$. This is invertible for all $a \in k$ and hence, the matrix $I-aE_i$ of $\overline{1 - af_i}$ is invertible for all $a \in k$. This implies that $E_i$ is nilpotent. Hence, $\bar f_i$ is nilpotent.
\end{proof}

\begin{Lemma} \label{MinResolution2}
Assume that $A$ is semiperfect with $A/{\rm rad}A$ finite dimensional. Let $M \in \mmod A$ admitting a projective resolution
$$\mathcal{P}: \quad 0 \to P_r \stackrel{d_r}{\longrightarrow} P_{r-1} \stackrel{d_{r-1}}{\longrightarrow} \cdots \stackrel{d_2}{\longrightarrow} P_1 \stackrel{d_1}{\longrightarrow} P_0 \stackrel{d_0}{\longrightarrow} M \to 0$$
which is minimal in $\mmod A$. Let $f: M^s \to M^t$ be a morphism in $\mmod A$ and $\{f_i: P_i^s \to P_i^t\}_{0 \le i \le r}$
a lifting of $f$, in $\mmod A$, to the projective resolutions $\mathcal{P}^s$ and $\mathcal{P}^t$ of $M^s$ and $M^t$, respectively. If $f$ is a radical morphism, then none of the $f_i$ are sections.
\end{Lemma}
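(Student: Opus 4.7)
The plan is to follow the strategy of Lemma \ref{MinResolution1}, turning the statement about the rectangular morphism $f_i$ into a common-kernel statement obtained by iteratively applying Lemma \ref{MinResolution1}.

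First I would decompose $f$ as a $t \times s$ matrix $(f_{lm})$ of endomorphisms $f_{lm} \in \End(M)$. The hypothesis that $f$ is radical translates into each $f_{lm}$ being radical in the sense that $f_{lm}(M) \subseteq {\rm rad}(M)$. Since the radical morphisms of $M$ form a two-sided ideal of $\End(M)$, every $k$-polynomial without constant term in the $f_{lm}$ is again a radical endomorphism of $M$. The chain lift decomposes correspondingly as $f_i = ((f_{lm})_i)$ with $(f_{lm})_i : P_i \to P_i$ a chain lift of $f_{lm}$; and for any such polynomial $\phi$, substituting $(f_{lm})_i$ for $f_{lm}$ produces a chain lift $\phi_i$ of $\phi$, whose top $\bar\phi_i \in \End(T_i)$ (with $T_i := P_i/P_i\,{\rm rad}(A)$) is nilpotent by Lemma \ref{MinResolution1}.

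Consequently, the (non-unital) subalgebra $R \subseteq \End(T_i)$ generated by the entries $\{\overline{(f_{lm})_i} : 1 \le l \le t, \; 1 \le m \le s\}$ is a finite-dimensional nil algebra, hence nilpotent (by a classical theorem of Levitzki): $R^N = 0$ for some $N \ge 1$. The descending chain $T_i \supseteq R\,T_i \supseteq R^2 T_i \supseteq \cdots \supseteq R^N T_i = 0$ has a last nonzero term, which is an $R$-submodule annihilated by $R$; this yields a nonzero vector $w \in T_i$ with $\overline{(f_{lm})_i}(w) = 0$ for every $l, m$.

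To conclude, suppose for contradiction that $f_i$ is a section with retraction $g_i : P_i^t \to P_i^s$, so that $g_i f_i = {\rm id}_{P_i^s}$. Passing to tops, $\bar g_i \bar f_i = {\rm id}_{T_i^s}$, and examining the $(1,1)$-entry of this $s \times s$ matrix identity over $\End(T_i)$ gives $\sum_{l=1}^{t} (\bar g_i)_{1 l} \cdot \overline{(f_{l 1})_i} = {\rm id}_{T_i}$. Evaluating at $w$, the left-hand side vanishes term-by-term while the right-hand side equals $w \ne 0$; contradiction.

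The main obstacle will be the multiplicative bookkeeping: verifying that the product of chain lifts is a chain lift of the product of endomorphisms, and that reducing modulo the radical is multiplicative in $\End(T_i)$, so that Lemma \ref{MinResolution1} applies uniformly to every element of the generated subalgebra $R$. Once this compatibility is in place, the nil-algebra and common-kernel steps yield the contradiction immediately.
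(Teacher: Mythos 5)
Your proposal is correct and closely parallels the paper's argument: in both, the heart of the matter is that Lemma~\ref{MinResolution1} applies to any polynomial without constant term in the radical endomorphisms $f_{lm}$ of $M$ (because such a polynomial is again radical, its lift to the resolution is the corresponding polynomial in the $(f_{lm})_i$, and reduction modulo the radical is multiplicative), so that every element of the relevant subalgebra of $\End(T_i)$ is nilpotent, yielding a common null vector and ruling out $f_i$ being a section. The paper first reduces to $s=1$ (a section stays a section componentwise), then applies Engel's theorem to the \emph{Lie} subalgebra of $\mathfrak{gl}(T_i)$ generated by the $\bar f_i^j$; you instead work with the (non-unital) \emph{associative} subalgebra $R$ generated by the $\overline{(f_{lm})_i}$ and invoke that a finite-dimensional nil algebra is nilpotent (this is classically due to Wedderburn -- Levitzki's theorem is the Noetherian-ring analogue, but the finite-dimensional case is elementary). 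Both routes are about equally short; the associative version avoids the $s=1$ reduction and the appeal to Engel, at the cost of the matrix bookkeeping in the final $(1,1)$-entry step, whereas the paper's Lie-theoretic version gives the common null vector more directly from Engel's formulation. Either way the use of finite dimensionality of $A/{\rm rad}A$ (hence of $T_i$ and $\End(T_i)$) is essential.
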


\begin{proof} Assume that $f$ is radical. Then all the components $M \to M^t$ of $f$ are radical. Also, if some $f_i$ is a section, then all components $P_i \to P_i^t$ of $f_i$ are sections. Therefore, we may assume that $s=1$.
Write $f = (f^1, f^2, \ldots, f^t)^T: M \to M^t$ and for $0 \le i \le r$, write $f_i = (f_i^1, f_i^2, \ldots, f_i^t)^T: P_i \to P_i^t$. Observe that for a given $1 \le j \le t$, the morphisms $\{f_i^j: P_i \to P_i\}_{0 \le i \le r}$ form
a lifting of $f^j$, in $\mmod A$, to the given projective resolution of $M$. Fix $0 \le i \le r$. It follows from Lemma \ref{MinResolution1} that the morphisms $\bar{f_i^1}, \ldots, \bar{f_i^t}$ are nilpotent. Consider the Lie subalgebra $\mathfrak{g}$ of $\mathfrak{g}\mathfrak{l}(P_i/{\rm rad}P_i)$ generated by the $\bar{f_i^1}, \ldots, \bar{f_i^t}$. Since a sum of compositions of the morphisms $f^1, \ldots, f^t$ is again radical, it follows that any element in $\mathfrak{g}$ is a nilpotent endomorphism. By Engel's Theorem, there is a common null vector $v$ to all elements of $\mathfrak{g}$. Therefore, the morphism $\bar f_i = (\bar{f_i^1}, \ldots, \bar{f_i^t})^T$ is not a section and hence, $f_i$ is not a section.
\end{proof}

The following lemma is an analogue of Lemma \ref{MinResolution2} in the setting of positively graded $k$-algebras.

\begin{Lemma} \label{LemmaLiftingGraded} Assume that $A$ is positively graded. Let $L \in {\rm gr} A$ be generated in a single degree. Let $M = \bigopls_{i=1}^rL[p_i]$ and $N = \bigopls_{j=1}^sL[q_j]$. Consider a minimal projective resolution $\mathcal{P}_L$ of $L$ in ${\rm gr} A$ and use direct sums of shifts of $\mathcal{P}_L$ to build minimal projective resolutions
$$\mathcal{P}_M: \quad \cdots \to P_2 \to P_1 \to P_0 \to M \to 0$$
and
$$\mathcal{P}_N: \quad \cdots \to Q_2 \to Q_1 \to Q_0 \to N \to 0$$
of $M$ and $N$ in ${\rm gr} A$, respectively. Let $f: M \to N$ be a graded morphism, and for $i \ge 1$, let $f_i: P_i \to Q_i$ be graded morphisms that form a lifting of $f$ to the above projective resolutions. If $f$ is a radical morphism, then none of the $f_i$ are sections.
\end{Lemma}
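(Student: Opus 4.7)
The plan is to adapt the proof of Lemma \ref{MinResolution2} to the graded setting, using the fact that the positive grading together with single-degree generation of $L$ will reduce the Engel-style nilpotency argument to a direct observation about upward shifts on a graded vector space of finite degree support. First I would normalize so that $L$ is generated in degree $0$, in which case $\bar L = L/L\cdot{\rm rad}A = L_0$ is concentrated in degree $0$ and $\overline{L[a]}$ is concentrated in degree $a$. Then I would decompose $f$ into components $f^{ij} : L[p_i] \to L[q_j]$: for $p_i < q_j$ these vanish because $\Hom_{{\rm gr}A}(L[p_i], L[q_j]) = 0$; for $p_i > q_j$ they are automatically radical (tops sit in distinct degrees); and for $p_i = q_j$ each is a graded degree-$0$ endomorphism of $L$ whose radicality forces it to vanish on $L_0$, hence to vanish identically since $L = L_0 \cdot A$. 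Thus the radicality of $f$ forces $f^{ij} = 0$ whenever $p_i \le q_j$.

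Next, I would choose a lift $f_\bullet$ satisfying $f_\ell^{ij} = 0$ whenever $p_i \le q_j$, for every $\ell$. This is a valid chain map because the differentials of $\mathcal{P}_M$ and $\mathcal{P}_N$ are block-diagonal with respect to the shift decomposition and the zero morphism lifts the zero morphism. Any two lifts differ by a null-homotopy, and since the differentials of the minimal resolution are radical, such a difference vanishes on the graded top. Hence $\bar{f_\ell}$ is independent of the choice of lift, and it suffices to verify non-injectivity of $\bar{f_\ell}$ for this specific choice.

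Setting $V := \overline{(P_L)_\ell}$, a finite-dimensional graded $k$-vector space (since $A_0 \cong k^n$ is finite-dimensional and $(P_L)_\ell$ is finitely generated), the tops decompose as $\bar P_\ell \cong \bigoplus_a V[a]^{m_a}$ and $\bar Q_\ell \cong \bigoplus_a V[a]^{n_a}$. By our choice of lift the only nonzero components $\bar{f_\ell^{ij}}$ occur for $p_i > q_j$, and the corresponding block $V[p_i] \to V[q_j]$ is a $k$-linear map sending $V_d$ into $V_{d + (p_i - q_j)}$, strictly shifting the grading upward. Let $d_{\max}$ be the largest degree in which $V$ is nonzero, and pick any nonzero $v \in V_{d_{\max}}$: every such component annihilates $v$, since $V_{d_{\max}+(p_i-q_j)} = 0$. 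Choosing any shift $a_0$ with $m_{a_0} > 0$ and any nonzero basis vector $e$ in the corresponding copy of $V[a_0]$ inside $\bar P_\ell$, the element $v \otimes e \in \bar P_\ell$ is nonzero and lies in $\ker \bar{f_\ell}$. Therefore $\bar{f_\ell}$ is not injective, and $f_\ell$ is not a section.

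I expect the main subtlety to lie in the first step, namely in verifying that radicality combined with single-degree generation forces the diagonal components ($p_i = q_j$) to be identically zero. Once this vanishing is in place, the nilpotency that Engel's theorem provides in the ungraded proof of Lemma \ref{MinResolution2} is here automatic, since any strictly positive-degree endomorphism of a graded vector space of finite support is nilpotent, and the common null vector can be exhibited explicitly in the top degree of $V$.
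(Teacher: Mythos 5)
Your proposal is correct and rests on essentially the same idea as the paper's proof: exploit the positive grading and single‐degree generation of $L$ to force the components $f^{ij}$ to vanish (or be radical) when $p_i \le q_j$, and then use an extremality argument in degree to conclude. The implementations differ slightly: the paper fixes an arbitrary lift $f_m$ and examines the column of its matrix indexed by the summand of $P_m$ with the largest shift $j_t + p$, observing directly from degree considerations that every entry of that column is radical, hence the column (and so $f_m$) cannot be a section; you instead construct a particular lift whose blocks with $p_i \le q_j$ vanish, argue that $\bar f_\ell$ is independent of the lift since differentials are radical, and exhibit a concrete top‐degree kernel vector of $\bar f_\ell$. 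The detour through lift independence is unnecessary — the paper's column argument works for any lift — but your version is equally valid and perhaps more transparent about where the obstruction to being a section lives, and as you anticipated both arguments replace the Engel‐theorem step of Lemma \ref{MinResolution2} by the elementary observation that a strictly degree‐raising endomorphism of a bounded graded vector space is nilpotent.
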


\begin{proof} Suppose that $f$ is a radical morphism. We may assume that $L$ is generated in degree $0$. Let us fix $m \ge 1$. Let
$$\mathcal{P}_L: \quad \cdots \to R_2 \to R_1 \to R_0 \to L \to 0$$
be a minimal projective resolution of $L$ in gr$A$. Decompose $R_m$ so that $R_m = S_1[j_1] \oplus \cdots \oplus S_t[j_t]$ where the $S_i$ are nonzero projective in gr$A$ that are generated in degree $0$ and $j_1 \le j_2 \le \cdots \le j_t$ are non-negative integers. Observe that
$$P_m = \bigopls_{i=1}^rR_m[p_i] = \bigopls_{i=1}^r\bigopls_{l=1}^tS_l[j_l+p_i],$$
and
$$Q_m = \bigopls_{i=1}^sR_m[q_i] = \bigopls_{i=1}^s\bigopls_{l=1}^tS_l[j_l+q_i].$$
Now, $f_m$ is given by an $st \times rt$ matrix $[f_m]$. If $f_m$ is a section, then every column of $[f_m]$ is a section. Let $q$ be the greatest element of the $q_i$ and $p$ be the greatest element of the $p_i$. We may assume that in the $s \times r$ matrix of $f$, there is no zero row. Since $f$ is a radical morphism and $L$ is generated in a single degree, this means that $p > q$. Consider the column $c$ of $[f_m]$ corresponding to the summand $S_1[j_t + p]$ of $P_m$. Since $j_t + p > j_l + q_i$ for all $1 \le l \le t$, $1 \le i \le s$, we see that $c$ only contains radical morphisms, and hence cannot be a section, a contradiction.
\end{proof}

We are now ready to prove our first main theorem of this section.

\begin{Theo} \label{MainTheo}
Let $A$ be a semiperfect Noetherian $k$-algebra with $A/{\rm rad}A$ finite dimensional and assume that $e$ is primitive with $\Ext_A^1(S_e, S_e) = 0$.  If both ${\rm pd}_A S_e$ and ${\rm pd}_\Gamma F(eA)$ are finite, then $S_e$ is self-orthogonal.
\end{Theo}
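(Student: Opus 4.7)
Assume for contradiction that $S_e$ is not self-orthogonal. The preceding lemmas already extract sharp numerical constraints on $d:=d_e(S_e)$, $r:={\rm pd}_A S_e$ and $s:={\rm pd}_\Gamma F(eA)$. Indeed, Lemma~\ref{technical} combined with $\Ext^1_A(S_e,S_e)=0$ forces $d\ge 3$, and a closer inspection of the proof of Lemma~\ref{cruciallemma} shows that failure of self-orthogonality forces $s\ge r$ and $d\le r-2$; in particular $r\ge 5$. Write the minimal projective resolution $\mathcal{P}\colon 0\to P_r\to\cdots\to P_0\to S_e\to 0$ in $\mathcal{C}(A)$ with $P_i=(eA)^{c_i}\oplus Q_i$ and $Q_i\in{\rm add}((1-e)A)$. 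Using the hypothesis that $A/{\rm rad}A$ is a product of copies of $k$, $\Ext^i_A(S_e,S_e)\cong\Hom_A(P_i,S_e)$ has $k$-dimension $c_i$, so $c_i>0$ if and only if $\Ext^i_A(S_e,S_e)\ne 0$; by Lemma~\ref{technical} both $c_{d-1}$ and $c_d$ are strictly positive.

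The strategy is to exploit the functor $F$. Since $F$ is exact and $F(S_e)=0$, applying $F$ to $\mathcal{P}$ yields an exact complex in $\mathcal{C}(\Gamma)$
\[ 0 \longrightarrow F(P_r)\longrightarrow\cdots\longrightarrow F(P_1)\longrightarrow F(P_0)=F(eA)\longrightarrow 0, \]
with $F(P_i)=F(eA)^{c_i}\oplus F(Q_i)$, in which every $F(Q_i)$ is projective in $\mathcal{C}(\Gamma)$ and the $F(P_i)$ for $i>d$ are entirely projective. Using the minimal projective resolution of $F(eA)$ in $\mathcal{C}(\Gamma)$ (of length $s$) to resolve each non-projective summand $F(eA)^{c_i}$ and gluing the results by iterated application of Lemma~\ref{ConstructionProjResol}, I would build an explicit projective resolution of $F(eA)$ in $\mathcal{C}(\Gamma)$. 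A careful length computation, exploiting the positions $0, d-1, d$ (and any other indices) at which non-projective summands appear, is designed to produce a resolution whose length strictly exceeds $s$, contradicting $s={\rm pd}_\Gamma F(eA)$.

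The crux of the argument, and the reason Section~\ref{sectionSmallHomDim} was devoted to the lifting lemmas, is to ensure that the projective resolution produced by this iterated Horseshoe gluing is \emph{minimal}, since only a minimal resolution's length lower-bounds the projective dimension. Each gluing step requires choosing a lift, through the minimal projective resolution of $F(eA)$, of the connecting map induced between adjacent copies of $F(eA)^{c_i}$ by the $(eA)$-to-$(eA)$ component of a differential $d_i$ of $\mathcal{P}$. Since $d_i$ is a radical morphism in $\mathcal{C}(A)$, Lemmas~\ref{MinResolution1} and \ref{MinResolution2} (or the graded counterpart \ref{LemmaLiftingGraded} in the positively graded setting) guarantee that none of the resulting lifts has a section as a component. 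This is precisely what forbids the cancellation of direct summands between consecutive terms of the glued complex, so that the length count goes through. Carrying out this analysis carefully, while tracking the numerical data $d,r,s$ constrained by Lemmas~\ref{cruciallemma}, \ref{technical} and \ref{NotProjective}, will deliver the contradiction and complete the proof.
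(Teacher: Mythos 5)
Your proposal correctly identifies the two ingredients that drive the paper's argument: the iterated use of Lemma~\ref{ConstructionProjResol}, and the lifting lemmas of Section~\ref{sectionSmallHomDim} applied to the $(eA)$-to-$(eA)$ components of the differential in the minimal resolution of $S_e$. The numerical bookkeeping ($d\ge 3$, $s\ge r$, $d\le r-2$, $r\ge 5$) is also consistent with what the paper extracts from Lemmas~\ref{cruciallemma}, \ref{technical} and \ref{NotProjective}.

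However, there is a genuine gap in the minimality step, and it is the heart of the matter. You assert that Lemmas~\ref{MinResolution1} and~\ref{MinResolution2} "guarantee that none of the resulting lifts has a section as a component", and that "this is precisely what forbids the cancellation of direct summands between consecutive terms of the glued complex." This inference is too strong: a non-section chain map can still admit component-wise split summands, so ruling out sections among the lifts does not by itself prevent the glued resolution from collapsing at intermediate degrees. What is actually needed — and all that is needed — is that the \emph{single top map} of the glued resolution is not a split monomorphism. The paper achieves this surgically: it restricts to the two-step segment of $\mathcal{P}$ between $L_{d+1}$ and $L_{d-1}$, forms a resolution of $F(L_{d-1})$ of length $s+1$ whose top map is $u=(f_s,h_s)^T$, observes that $h_s$ is radical so that $u$ a section would force $f_s$ to be a section, and then invokes Lemma~\ref{MinResolution2} on the radical $(eA)$-to-$(eA)$ component $g$ of the differential to conclude $f_s$ is not a section. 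This yields $\mathrm{pd}_\Gamma F(L_{d-1})=s+1$ exactly. The remaining descent to $F(L_1)=F(eA)$ then requires \emph{no further minimality analysis at all}: the short exact sequences $0\to F(L_{d-i})\to F(P_{d-1-i})\to F(L_{d-1-i})\to 0$, with $\mathrm{pd}_\Gamma F(P_{d-1-i})\le s<\mathrm{pd}_\Gamma F(L_{d-i})$, force $\mathrm{pd}_\Gamma F(L_{d-1-i})=\mathrm{pd}_\Gamma F(L_{d-i})+1$ by the standard projective-dimension shift, culminating in $\mathrm{pd}_\Gamma F(eA)=s+d-1\ne s$. You should replace the blanket minimality claim by this two-part scheme: one precise non-section verification at the top of one gluing, followed by the elementary projective-dimension estimate for short exact sequences. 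As written, your "careful length computation" and the reason it cannot be short-circuited by cancellation are promised but not delivered, and they are not automatic consequences of the lifting lemmas.
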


\begin{proof}
Assume ${\rm pd}_A S_e = r < \infty$ and ${\rm pd}_\Gamma F(eA) = s < \infty$ and suppose to the contrary that $S_e$ is not self-orthogonal. Write $d := d_e(S_e)$. We have $d, r \ge 2$. The case $r \le 3$ can be excluded using Lemma \ref{NotProjective}. If $r = 4$, then from Lemma \ref{cruciallemma}, we get $d=2$. By Lemma \ref{technical}, we get $\Ext_A^1(S_e, S_e) \ne 0$, which is impossible. So consider the case where $r \ge 5$. Again, we have $d > 2$. By Lemma \ref{technical},
both $\Ext_A^{d-1}(S_e,S_e)$, $\Ext_A^d(S_e,S_e)$ are nonzero. Let $L_i$ be the $i$-th syzygy of $S_e$. By definition of $d$, the syzygy $L_{d+1}$ is such that $\Ext^j_A(L_{d+1}, S_e)=0$ for all $j \ge 0$. Therefore, ${\rm pd}_\Gamma F(L_{d+1}) = {\rm pd}_A L_{d+1} = r-d-1 \ge 1$. Since $s \ge r-1$ by Lemma \ref{cruciallemma}, we get $r-d-1 \le s-d < s-2$.
Consider the following part of a minimal projective resolution
$$0 \to L_{d+1} \to (eA)^p \oplus P_d \to  (eA)^q \oplus P_{d-1} \to L_{d-1} \to 0$$
of $S_e$ in mod$A$ where $eA$ is not isomorphic to a direct summand of $P_d \oplus P_{d-1}$ and both $p,q$ are positive.  Applying the functor $F$, we get an exact sequence
$$0 \to F(L_{d+1}) \to F(eA)^p \oplus F(P_d) \stackrel{f}{\to} F(eA)^q \oplus F(P_{d-1}) \to F(L_{d-1}) \to 0$$
where $F(P_d), F(P_{d-1})$ are projective $\Gamma$-modules and $F(L_{d+1})$ has projective dimension $r-d-1$. Consider now a minimal projective resolution
$$0 \to Q_s \to \cdots \to Q_1 \to Q_0 \to F(eA) \to 0$$
of $F(eA)$, where $s \ge r-1 \ge 4$.  We have a commutative diagram
$$\xymatrixcolsep{10pt}\xymatrixrowsep{14pt}\xymatrix{& &0 \ar[d] & 0 \ar[d]&& \\ & & Q_s^p \ar[d]^{h_s} \ar[r]^{f_s}& Q_s^q \ar[d]^{g_s}&&& \\ &&Q_{s-1}^p\ar@{.}[d] \ar[r]^{f_{s-1}}&Q_{s-1}^q\ar@{.}[d] &&\\ & &F(P_d) \oplus Q_0^p \ar[d] \ar[r]^{f_0}&  F(P_{d-1}) \oplus Q_0^q \ar[d] &&\\0 \ar[r]& F(L_{d+1})\ar[r]& F(P_d) \oplus F(eA)^p \ar[r]^f& F(P_{d-1}) \oplus F(eA)^q \ar[r] & F(L_{d-1}) \ar[r]&0\\}$$
where $\{f_i\}_{0 \le i \le s}$ is a lifting of $f$ to the projective resolutions in the diagram above. By Lemma \ref{ConstructionProjResol}, this yields a projective resolution of length $s+1$ of $F(L_{d-1})$. The tail of this projective resolution is
$$0 \rightarrow Q_s^p \stackrel{u}{\rightarrow} Q_s^q \oplus Q_{s-1}^p$$
where $u = (f_s, h_s)^T$.
Assume that $u$ is a section, which means that $f_s$ is a section, since $h_s$ is a radical map. Decompose $f$ as a $2 \times 2$ matrix whose component corresponding to $F(eA)^p \to F(eA)^q$ is denoted $g$.  Similarly decompose $f_0$ as a $2 \times 2$ matrix whose component corresponding to $Q_0^p \to Q_0^q$ is denoted $h$.  We get a commutative diagram
$$\xymatrix{0 \ar[r] & Q_s^p \ar[d]^{f_s} \ar[r] & Q_{s-1}^p \ar[d]^{f_{s-1}}\ar[r] & \cdots \ar[r] & Q_1^p \ar[d]^{f_1}\ar[r] & Q_0^p \ar[d]^h\ar[r] & F(eA)^p \ar[r]\ar[d]^g & 0\\
0 \ar[r] & Q_s^q \ar[r] & Q_{s-1}^q \ar[r] & \cdots \ar[r] & Q_1^q \ar[r] & Q_0^q \ar[r] & F(eA)^q \ar[r] & 0}$$
Observe that $g$ is a radical map,
which gives that $f_s$ is not a section, by Lemma \ref{MinResolution2}. This is a contradiction. Therefore, $u$ is not a section, meaning that ${\rm pd}_\Gamma F(L_{d-1})=s+1$. Since for $P$ finitely generated projective in $\mmod A$, we have ${\rm pd}_\Gamma F(P) \le s$, we get ${\rm pd}_\Gamma F(L_{d-1-i})=s+1+i$ for $0 \le i \le d-2$. For $i=d-2$, we get ${\rm pd}_\Gamma F(L_{1}) = {\rm pd}_\Gamma F(eA) =s+d-1 \ne s$, since $d \ne 1$. This is a contradiction.
\end{proof}

Of course, the above theorem also establishes this weaker version.

\begin{Theo}
Let $A$ be a semiperfect Noetherian $k$-algebra with $A/{\rm rad}A$ finite dimensional and assume that $e$ is primitive with $\Ext_A^1(S_e, S_e) = 0$. If both ${\rm gl.dim} A$ and ${\rm gl.dim \Gamma}$ are finite, then $S_e$ is self-orthogonal.
\end{Theo}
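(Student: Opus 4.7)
The plan is to obtain this statement as an immediate corollary of Theorem \ref{MainTheo}. Essentially nothing new needs to be proved; one only has to verify that the (seemingly weaker) hypothesis of finite global dimensions upgrades the two projective-dimension hypotheses required in Theorem \ref{MainTheo}.

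First, I would note that $S_e = eA/e\,{\rm rad}A$ is a finitely generated right $A$-module, since $A$ is semiperfect Noetherian. Similarly, $F(eA) = \Hom_A((1-e)A, eA)$ is a finitely generated right $\Gamma$-module (it is canonically isomorphic to $eA(1-e)$ as a right $\Gamma$-module, and as such is a summand of the Noetherian $\Gamma$-module $A(1-e)$). Therefore, using the description of the global dimension via finitely generated modules (recalled in Section 2 via \cite{Oso}),
\[
{\rm pd}_A S_e \le {\rm gl.dim}\,A < \infty \quad \text{and} \quad {\rm pd}_\Gamma F(eA) \le {\rm gl.dim}\,\Gamma < \infty.
\]

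Next, the hypothesis $\Ext_A^1(S_e, S_e) = 0$ is carried over verbatim, and the primitivity of $e$ together with the assumption that $A/{\rm rad}A$ is finite dimensional are exactly the standing assumptions of Theorem \ref{MainTheo}. Thus all hypotheses of that theorem are in place, and a direct application yields that $S_e$ is self-orthogonal.

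There is no substantial obstacle: the entire work is done in Theorem \ref{MainTheo}, and this corollary is simply its most natural statement in terms of global dimensions. The only thing worth emphasizing in the write-up is the elementary observation that $F(eA) \in \C(\Gamma)$ is finitely generated, so that the finiteness of ${\rm gl.dim}\,\Gamma$ indeed controls ${\rm pd}_\Gamma F(eA)$.
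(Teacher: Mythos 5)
Your proof is correct and matches the paper's approach exactly: the paper introduces this theorem with the remark that it is a ``weaker version'' of Theorem \ref{MainTheo}, obtained precisely by noting that finiteness of ${\rm gl.dim}\,A$ and ${\rm gl.dim}\,\Gamma$ bounds ${\rm pd}_A S_e$ and ${\rm pd}_\Gamma F(eA)$, respectively. The brief verification that $F(eA)$ is finitely generated over $\Gamma$ is a reasonable detail to make explicit, but it is the same route.
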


In case $A$ is positively graded, we get the following.

\begin{Theo} \label{MainTheo2}
Let $A$ be a positively graded Noetherian $k$-algebra, and assume that $e$ is primitive of degree zero with $\Ext_A^1(S_e, S_e) = 0$.  If ${\rm pd}_A S_e$ and ${\rm pd}_\Gamma F(eA)$ are finite, then $S_e$ is self-orthogonal.
\end{Theo}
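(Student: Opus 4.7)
The plan is to follow the structure of the proof of Theorem \ref{MainTheo} and replace the use of Lemma \ref{MinResolution2} by its graded analogue Lemma \ref{LemmaLiftingGraded} at the crucial diagram-chase step. Suppose for contradiction that $S_e$ is not self-orthogonal and set $r = {\rm pd}_A S_e$, $s = {\rm pd}_\Gamma F(eA)$, $d = d_e(S_e)$; since $S_e$ is not self-orthogonal and $\Ext_A^1(S_e,S_e) = 0$, we have $d \ge 2$. Lemma \ref{NotProjective} rules out $r \le 3$. For $r = 4$, Lemma \ref{cruciallemma} forces $d \le 2$, hence $d = 2$; Lemma \ref{technical} then gives $\Ext_A^{d-1}(S_e, S_e) = \Ext_A^1(S_e, S_e) \ne 0$, contradicting the hypothesis. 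Thus $r \ge 5$, and the same application of Lemma \ref{technical} excludes $d = 2$, so $d \ge 3$.

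The graded-specific step is to verify that $F(eA)$ is generated in a single degree as a graded right $\Gamma$-module, so that Lemma \ref{LemmaLiftingGraded} applies with $L := F(eA) \in {\rm gr}\Gamma$. Here the hypothesis $\Ext_A^1(S_e, S_e) = 0$ is essential: it implies that no shift of $eA$ appears as a summand of the first term $P_1$ in a minimal graded projective resolution of $S_e$, so every degree-$1$ basis element of $e \cdot {\rm rad}(A)$ lands in $(1-e)A$. Consequently, the minimal $\Gamma$-generators of $eA(1-e)$ are precisely the degree-$1$ arrows out of $e$; they all live in degree $1$, giving the required single-degree generation. (If $F(eA) = 0$, then under the hypothesis $eA = eAe = k$, $S_e = eA$ is projective, and the theorem is trivial.)

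The rest of the argument is a faithful graded transcription of the proof of Theorem \ref{MainTheo}. Starting from the segment
$$0 \to L_{d+1} \to \bigopls_i eA[a_i] \oplus P_d \to \bigopls_j eA[b_j] \oplus P_{d-1} \to L_{d-1} \to 0$$
of a minimal graded projective resolution of $S_e$ (with no shift of $eA$ occurring in $P_d$ or $P_{d-1}$, and both $\bigopls_i eA[a_i]$ and $\bigopls_j eA[b_j]$ nonzero by Lemma \ref{technical}), apply $F$ and lift through a minimal graded projective resolution of $F(eA)$ of length $s$. At the top of the resulting commutative ladder, the component $g$ between the $F(eA)$-summands is a graded radical map, and Lemma \ref{LemmaLiftingGraded} (applicable thanks to the single-degree generation just established) shows that the lift $f_s$ is not a section, hence $u = (f_s, h_s)^T$ is not a section either. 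This yields ${\rm pd}_\Gamma F(L_{d-1}) = s+1$; descending induction over the syzygies of $S_e$ then gives ${\rm pd}_\Gamma F(eA) = s + d - 1 \ne s$ since $d \ge 3$, the desired contradiction.

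The principal obstacle is the degree bookkeeping required to legitimately invoke Lemma \ref{LemmaLiftingGraded}: the substantive graded ingredient is precisely the observation that $\Ext_A^1(S_e, S_e) = 0$ forces $F(eA)$ to be generated in a single degree. Without this, loops at the $e$-vertex composed with outgoing arrows would produce minimal $\Gamma$-generators of $F(eA)$ in arbitrarily many degrees, and Lemma \ref{LemmaLiftingGraded} would not directly apply. Once the single-degree generation is in hand, every other step of the semiperfect argument transfers verbatim because radical graded morphisms and minimal graded projective resolutions behave formally as in the ungraded setting.
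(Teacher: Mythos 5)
Your proposal is correct and follows the paper's approach exactly: the paper's proof of Theorem~\ref{MainTheo2} simply declares itself ``same as the proof of Theorem~\ref{MainTheo}'' with Lemma~\ref{LemmaLiftingGraded} replacing Lemma~\ref{MinResolution2}, and with the powers $(eA)^p,(eA)^q$ replaced by nontrivial direct sums of shifts of $eA$. You have faithfully transcribed that reduction.

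What you add beyond the paper's terse statement is a genuine and necessary observation: Lemma~\ref{LemmaLiftingGraded} requires its object $L$ to be generated in a single degree, and for $L=F(eA)=eA(1-e)$ this is not automatic. Your justification is right: $\Ext_A^1(S_e,S_e)=0$ is equivalent to $eA_1e=0$, and since $A$ is generated in degrees $0,1$ one checks $eA_i(1-e)=eA_1(1-e)\cdot\Gamma_{i-1}$ for all $i\ge 1$, so $F(eA)$ is generated in degree $1$. (The degenerate case $eA_1(1-e)=0$ forces $eA=ke$, so $S_e$ is projective and there is nothing to prove.) The paper leaves this hypothesis verification implicit; your filling it in makes the argument complete rather than merely by-analogy, and correctly pinpoints the only place where the graded transcription is not entirely formal.
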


\begin{proof} Same as the proof of Theorem \ref{MainTheo}, by working in the categories ${\rm gr}A, {\rm gr}\Gamma$ rather than $\mmod A, \mmod \Gamma$ and using Lemma \ref{LemmaLiftingGraded} rather than Lemma \ref{MinResolution2}. The modules $(eA)^p, (eA)^q$ in the proof of Theorem \ref{MainTheo} have to be replaced by non-trivial direct sums of shifts of $eA$ and $P_d \oplus P_{d-1}$ do not have a shift of $eA$ as a direct summand.
\end{proof}

Of course, we also have this weaker version.

\begin{Theo}
Let $A$ be a positively graded Noetherian $k$-algebra, and assume that $e$ is primitive of degree zero with $\Ext_A^1(S_e, S_e) = 0$. If both ${\rm gl.dim} A$ and ${\rm gl.dim \Gamma}$ are finite, then $S_e$ is self-orthogonal.
\end{Theo}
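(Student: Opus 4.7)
The plan is to deduce this statement directly from the stronger Theorem \ref{MainTheo2}, which has already been established. The task amounts to showing that the finiteness of ${\rm gl.dim}\,A$ and ${\rm gl.dim}\,\Gamma$ implies the hypotheses ${\rm pd}_A S_e < \infty$ and ${\rm pd}_\Gamma F(eA) < \infty$ appearing in Theorem \ref{MainTheo2}.

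First I would observe that since $S_e \in {\rm gr}A$ is a (graded) finitely generated $A$-module, and the global dimension ${\rm gl.dim}\,A$ bounds the projective dimension of every finitely generated graded $A$-module (cf.\ the formula for ${\rm gl.dim}\,A$ recalled in Section \ref{sectionNoethGraded}), the hypothesis ${\rm gl.dim}\,A < \infty$ immediately gives ${\rm pd}_A S_e \le {\rm gl.dim}\,A < \infty$.

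Next I would verify the second finiteness: $F(eA) = \Hom_A((1-e)A, eA) \cong (1-e)Ae$, which is a finitely generated graded right $\Gamma$-module since $A$ is Noetherian and the functor $F$ sends $\mathrm{gr}\,A$ to $\mathrm{gr}\,\Gamma$. Hence ${\rm pd}_\Gamma F(eA) \le {\rm gl.dim}\,\Gamma < \infty$.

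With both finiteness hypotheses verified and the standing assumptions ($A$ Noetherian positively graded, $e$ primitive of degree zero, $\Ext^1_A(S_e,S_e)=0$) carried over unchanged, an application of Theorem \ref{MainTheo2} yields that $S_e$ is self-orthogonal. There is no real obstacle here; the whole point is that this theorem is packaged as a corollary of the finer Theorem \ref{MainTheo2}, whose proof (relying on Lemma \ref{LemmaLiftingGraded} and the tool lemmas of Section \ref{sectionSmallHomDim}) did the heavy lifting.
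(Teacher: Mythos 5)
Your proposal is correct and is precisely the paper's intended reading: the theorem is stated immediately after Theorem \ref{MainTheo2} with the comment ``Of course, we also have this weaker version,'' so the paper too derives it simply by noting that ${\rm gl.dim}\,A < \infty$ forces ${\rm pd}_A S_e < \infty$ and ${\rm gl.dim}\,\Gamma < \infty$ forces ${\rm pd}_\Gamma F(eA) < \infty$, then invoking Theorem \ref{MainTheo2}. One small slip: for a right $A$-module $M$ one has $\Hom_A((1-e)A, M) \cong M(1-e)$, so $F(eA) \cong eA(1-e)$ (consistent with the module appearing in Conjecture \ref{conj3infinite}), not $(1-e)Ae$; this does not affect the argument since either way it is a finitely generated graded $\Gamma$-module whose projective dimension is bounded by ${\rm gl.dim}\,\Gamma$.
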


\section{Examples}
\subsection{Skew group rings with cyclic groups}
Let $k$ be an algebraically closed field of characteristic zero {and $m \ge 2$ a positive integer.
Let $S=k[x_1,\ldots,x_n]$ and $\mu_m = \{z \in k \st z^m=1 \}.$
We write a  diagonal action of $\mu_m$ on $S$, using superscript notation,
$x_i^\zeta = \zeta^{a_i}x_i$ for $\zeta \in \mu_m$ by choosing $a_1,\ldots,a_n \in \Z/m$ .  Consider the skew group
algebra
$$A=S \rtimes \mu_m = \bigopls_{\zeta \in \mu_m} S\zeta$$
with multiplication $\zeta x_i= x_i^\zeta\zeta.$
Let $\chi_i: \mu_m \rightarrow \mu_m$ be the character $\chi_i(\zeta)=\zeta^i$.
For $i \in \Z/m$, let
$e_i = \frac{1}{m} \sum_{\zeta \in \mu_m} \chi_i(\zeta) \zeta$
be the primitive idempotents of $A$.  Note that $\chi_i(\zeta)$ is a coefficient in $k$ and $\zeta$ is an element of the group $\mu_m$
in this expression of $e_i$ in the group algebra $k\mu_m$ which is a subalgebra of $A$.

It is well known that $A$ can be presented as the path algebra with relations via the McKay graph and commuting relations~\cite{CMT,BSW}. More specifically, we define
$Q_0 =\Z/m$ and we have $n$ arrows that go into and out of each vertex.  The arrows that start from $i$ go
to vertices $i+a_1,\ldots, i+a_n$.  Write $x_i^j$ for the arrow from vertex $j$ to vertex $j+a_i$.  The commuting relations are of the form
$$ x_i^{j+a_k} x^j_k = x_k^{j+a_i} x^j_i.$$
The explicit isomorphism of $A$ with the path algebra $kQ$ modulo these relations can be seen in Proposition 2.8(3) of \cite{CMT} or in Corollary 4.1 of \cite{BSW}.

Let $S_i$ and $P_i$ be the simple and projective modules at vertex $i$, respectively.  By attaching the correct weights on the usual Koszul resolution to make it equivariant, we get that the projective resolutions of the simple modules are of the form:
$$\rightarrow \bigopls_{1 \leq x < y \leq n} P_{k+a_x+a_y} \rightarrow \bigopls_{1 \leq x \leq n} P_{k+a_x} \rightarrow P_k
\rightarrow S_k \rightarrow 0.$$
$$ 0 \rightarrow P_{k+\sum a_i} \rightarrow \bigopls_{1 \leq x \leq n} P_{k-a_x+\sum a_i} \rightarrow
\bigopls_{1 \leq x < y \leq n} P_{k-a_x-a_y+\sum a_i}
\rightarrow \cdots $$
Note that the global dimension of $A$ is $n$, and that $A$ is positively graded and Noetherian. Now, Proposition \ref{prop1} yields the following.
\begin{Cor}
Let $e=e_k$ be a primitive idempotent of $A=S \rtimes \mu_m$ as above.   If $\sum_{i \in I} a_i \neq 0 \pmod{m}$ for all non-empty subsets $I \subseteq \Z/m$, then ${\rm gl.dim} \; \Gamma \leq 2n-1$.
\end{Cor}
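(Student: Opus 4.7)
The plan is to deduce the bound from Proposition \ref{prop1}(2), which states that if $S_e$ is self-orthogonal, then
$${\rm gl.dim}\,\Gamma \le \max\bigl({\rm id}_A S_e + {\rm pd}_A S_e - 1,\ {\rm gl.dim}\,A\bigr).$$
So the whole task reduces to verifying (i) that $S_e = S_k$ is self-orthogonal under the stated hypothesis, and (ii) that each of ${\rm pd}_A S_k$, ${\rm id}_A S_k$, and ${\rm gl.dim}\,A$ equals $n$.

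First I would use the explicit equivariant Koszul-type projective resolution of $S_k$ recalled in the excerpt,
$$0\to P_{k+\sum_i a_i}\to \bigopls_{|I|=n-1} P_{k+\sum_{x\in I}a_x}\to\cdots\to\bigopls_{1\le x\le n} P_{k+a_x}\to P_k\to S_k\to 0,$$
noting that in homological degree $i$ the term is $\bigoplus_{|I|=i} P_{k+\sum_{x\in I}a_x}$ with $I$ ranging over subsets of $\{1,\dots,n\}$. Applying $\Hom_A(-,S_k)$ and using that $\Hom_A(P_j,S_k)\ne 0$ iff $j\equiv k\pmod{m}$, we get $\Ext_A^i(S_k,S_k)\ne 0$ iff there exists $I\subseteq\{1,\dots,n\}$ with $|I|=i$ and $\sum_{x\in I}a_x\equiv 0\pmod{m}$. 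The hypothesis rules this out for every non-empty $I$, so $\Ext_A^i(S_k,S_k)=0$ for all $i\ge 1$, i.e., $S_k$ is self-orthogonal.

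Next I would record the three dimensions. The resolution above has length exactly $n$ and does not split off, so ${\rm pd}_A S_k = n$, and consequently ${\rm gl.dim}\,A = n$. For the injective dimension, by the same resolution $\Ext_A^n(S_k,S_j)\ne 0$ iff $j\equiv k+\sum_i a_i\pmod{m}$; by the symmetric (left-module) version, or equivalently by computing $\Ext_A^n(S_{k-\sum a_i},S_k)$ from the resolution of $S_{k-\sum a_i}$, one sees that $\Ext_A^n(M,S_k)\ne 0$ for some $M$, hence ${\rm id}_A S_k = n$ (it cannot exceed ${\rm gl.dim}\,A=n$). Substituting into the bound from Proposition \ref{prop1}(2) gives
$${\rm gl.dim}\,\Gamma \le \max(n+n-1,\ n) = 2n-1.$$

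The only place that requires a little care is the verification that $S_k$ is self-orthogonal from the explicit resolution: one must check that in the Koszul complex the summand $P_k$ genuinely survives as a nontrivial direct summand in the homological degrees indexed by subsets $I$ with $\sum_{x\in I}a_x\equiv 0$, rather than being cancelled in the differentials. Since the resolution is minimal (it is an equivariant lift of the standard minimal Koszul resolution of the residue field of $S$), no such cancellation occurs, so the forbidden $\Ext$ groups are detected exactly by the arithmetic condition on sums of the $a_i$, and the proof goes through.
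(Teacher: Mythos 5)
Your proof is correct and follows exactly the route the paper intends: the paper states only ``Proposition~\ref{prop1} yields the following,'' and your argument supplies the omitted verification that the hypothesis on the $a_i$ forces $S_k$ to be self-orthogonal (via the minimal equivariant Koszul resolution), together with ${\rm pd}_A S_k = {\rm id}_A S_k = {\rm gl.dim}\,A = n$, after which Proposition~\ref{prop1}(2) gives $\max(n+n-1,\,n) = 2n-1$. One small remark: the corollary's condition is written ``$I \subseteq \Z/m$'' but must of course be read as ``$I \subseteq \{1,\dots,n\}$,'' which is how you correctly interpreted it.
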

For some particular examples, we could take weights $a_i=1$ for $n<m$ so we obtain an algebra $\Gamma$ that can be interpreted as a noncommutative resolution of the affine cone
of the $m^{th}$ Veronese embedding of $\mathbb{P}^{n-1}$.

\subsection{Skew group algebras of dimension two}
Let $G$ be a finite group and let $V$ be a two dimensional representation of $V$ via the map $\rho: G \rightarrow \GL(V)$.
Let $\overline{A}=k[V] \rtimes G$.  It is shown in Theorem 5.6 \cite{RVDB} that $\overline{A}$ is Morita equivalent to a basic algebra $A$ with
quiver given by the McKay graph.  The McKay graph is defined by letting $Q_0=\{1,\ldots,n\}$ be indexed by the set $\{ W_1,\ldots,W_n \}$ of irreducible representations of $G$.
The number of arrows from $i$ to $j$ is given by the dimension of the vector space $\Hom_G(W_i, W_j \otimes V)$.
As studied in \cite{RVDB}, this quiver is a finite translation quiver with translation $\tau(i) = j$ where $W_i \otimes \wedge^2 V \simeq W_j$.
We also know that $A$ is Noetherian and Koszul and has global dimension two.  The projective resolutions of the simples are given by
$$0\rightarrow P_{\tau(i)} \rightarrow \bigopls_{i \rightarrow j} P_j \rightarrow P_i \rightarrow S_i \rightarrow 0.$$  We make the following observation which is well known to experts.  We obtain that if there are no loops or $\tau$ loops at the vertex corresponding to $W_i$ then the subalgebra $\Gamma$ without that vertex has global dimension 3.

\begin{Cor} \label{cor8.2}Let $A$ be a skew group algebra of dimension two as above.
Let $e$ be the primitive idempotent corresponding to the irreducible representation $W_i$.  If $\Hom_G(W_i,W_i\otimes V)=0$ and
$W_i \otimes \wedge^2 V \not \cong W_i$ then $\rm{gl.dim} \Gamma = 3$.
\end{Cor}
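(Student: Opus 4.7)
The plan is to establish ${\rm gl.dim}\,\Gamma = 3$ by producing matching upper and lower bounds. The upper bound follows from the machinery of Section \ref{sectionHomDim} once one checks that $S_e = S_i$ is self-orthogonal, and the lower bound is exhibited by a single simple $\Gamma$-module of projective dimension exactly $3$.

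To see that $S_e$ is self-orthogonal, I would apply $\Hom_A(-,S_i)$ to the minimal projective resolution
$$0 \to P_{\tau(i)} \to \bigopls_{i \to j} P_j \to P_i \to S_i \to 0$$
recalled in the excerpt. Minimality forces every differential in the resulting Hom complex to vanish, so one reads off $\Ext^1_A(S_i,S_i) = k^{\#\{\text{loops at }i\}}$ and $\Ext^2_A(S_i,S_i) = k \cdot \delta_{\tau(i),i}$. By the McKay-quiver recipe, the number of loops at $i$ equals $\dim \Hom_G(W_i, W_i \otimes V)$, which vanishes by hypothesis; and $\tau(i) \neq i$ by the hypothesis $W_i \otimes \wedge^2 V \not\cong W_i$. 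Higher $\Ext$ groups vanish since ${\rm gl.dim}\,A = 2$. Hence $S_e$ is self-orthogonal, and Proposition \ref{prop1}(2) with ${\rm pd}_A S_e = 2$ and ${\rm id}_A S_e \leq {\rm gl.dim}\,A = 2$ gives
$${\rm gl.dim}\,\Gamma \leq \max({\rm id}_A S_e + {\rm pd}_A S_e - 1, {\rm gl.dim}\,A) \leq 3.$$

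For the reverse inequality I would take $j = \tau^{-1}(i)$, which is a well-defined vertex distinct from $i$ since $\tau$ is a bijection and $\tau(i) \neq i$, and show ${\rm pd}_\Gamma F(S_j) = 3$. Writing the minimal projective resolution of $S_j$ as
$$0 \to P_i \to P_i^{\oplus m} \oplus Q \to P_j \to S_j \to 0,$$
where $Q$ carries no $P_i$-summand, and applying the exact functor $F$, the only non-projective term of the resulting complex is $F(P_i) = F(eA)$. By Proposition \ref{prop1}(1) this module admits the minimal length-one projective resolution $0 \to F(P_{\tau(i)}) \to \bigopls_{i \to k} F(P_k) \to F(eA) \to 0$, whose end terms are genuinely projective in $\mathcal{C}(\Gamma)$: the absence of loops at $i$ ensures the middle term carries no $F(eA)$-summand, and $\tau(i) \neq i$ ensures $F(P_{\tau(i)})$ is projective. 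Splicing these via Lemma \ref{ConstructionProjResol} produces an explicit projective resolution of $F(S_j)$ of length $3$.

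The final step is to confirm minimality of this resolution: its top differential is inherited from the radical map $F(P_{\tau(i)}) \to \bigopls_{i \to k} F(P_k)$ in the minimal resolution of $F(eA)$, and the Horseshoe construction deposits this map into a single direct summand, so the composite remains a radical morphism. Minimality then forces ${\rm pd}_\Gamma F(S_j) = 3$, completing the argument. The main obstacle is precisely this bookkeeping in the last step, verifying that the term $F(P_{\tau(i)})$ does not split off after splicing; all other ingredients are direct consequences of results already established in the paper together with the McKay-type resolutions recalled in the excerpt.
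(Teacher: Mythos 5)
Your argument follows the paper's own proof closely: the upper bound comes from Proposition~\ref{prop1}, and for the lower bound you resolve $F(S_j)$ with $j=\tau^{-1}(i)$ (the paper's $m$) by splicing its $A$-resolution with the length-one $\Gamma$-resolution of $F(eA)$, exactly as the paper does; your self-orthogonality check for $S_e$ is also correct.

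The place where your write-up has a gap is the minimality step. You allow the middle term of the minimal projective resolution of $S_j$ to carry $m\ge 0$ copies of $P_i$, but your minimality argument only covers $m=0$. If $m>0$, the top differential of the Horseshoe resolution is a map $F(P_{\tau(i)}) \to \bigopls_{i\to k}F(P_k) \oplus F(P_{\tau(i)})^{\oplus m}$ whose second block is a component $f_1$ of the lifted chain map, and there is no a priori reason for $f_1$ to be radical: a radical map $P_i\to P_i^{\oplus m}$ in $\C(A)$ does not get sent by $F$ to a radical map in $\C(\Gamma)$, because $F(eA)$ is not projective and degree-one elements of $e\,{\rm rad}A\,(1-e)$ lie outside ${\rm rad}_\Gamma F(eA)$. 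This is exactly the difficulty the paper addresses in the proof of Theorem~\ref{MainTheo} via Lemma~\ref{MinResolution2}; the phrase ``the Horseshoe construction deposits this map into a single direct summand'' is accurate only when $m=0$.

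Fortunately $m=0$ does hold under the stated hypotheses, which is what the paper's proof records (without proof) as $\Ext^1_A(S_m,S_i)=0$. It follows from the no-loop hypothesis together with the two-dimensionality of $V$: since $V^*\cong V\otimes(\wedge^2 V)^{-1}$, the number of arrows $\tau^{-1}(i)\to i$ is
$\dim\Hom_G\bigl(W_i, W_{\tau^{-1}(i)}\otimes V\bigr) = \dim\Hom_G\bigl(W_i, W_i\otimes V^*\bigr) = \dim\Hom_G\bigl(W_i, W_i\otimes V\bigr) = 0$,
the last equality by character duality in characteristic zero. Once you record this, every term of the spliced resolution is free of $F(eA)$-summands, all its differentials are radical, and your argument closes and coincides with the paper's.
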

\begin{proof} It follows from Proposition~\ref{prop1} that  $\rm{gl.dim} \Gamma \leq 3$.  Let $W_m = \wedge^2V^* \otimes W_i$ so $m = \tau^{-1}(i) \neq i$.
We have the resolutions
 $$0\rightarrow P_{i} \rightarrow \bigopls_{m \rightarrow j} P_j \rightarrow P_m \rightarrow S_m \rightarrow 0.$$
$$0\rightarrow P_{\tau(i)} \rightarrow \bigopls_{i \rightarrow \ell} P_\ell \rightarrow P_i \rightarrow S_i \rightarrow 0.$$
Note that $\Ext^j_A(S_m, S_i)=0$ for $j = 0, 1$ and $\Ext^j_A(S_i, S_i)=0$ for $j = 1,2$. Now we apply the functor $F$ and combine the resulting sequences to obtain the projective resolution
$$0\rightarrow F(P_{\tau(i)}) \rightarrow \bigopls_{i \rightarrow \ell} F(P_\ell) \rightarrow \bigopls_{m \rightarrow j} F(P_j) \rightarrow F(P_m) \rightarrow F(S_m) \rightarrow 0.$$
Note that the maps are all radical maps so this is a minimal resolution of $F(S_m)$
So we must have that  $\rm{gl.dim} \Gamma = 3$.
\end{proof}
For a more specific example, let $r>1$ and
let $G$ be the dihedral group
$\langle \s,\t \st \s^r = \t^2 = 1, \s\t=\t\s^{-1} \rangle$.
The representation $V$ is defined
by
\[ \s =
\begin{pmatrix}
 \z & 0 \\
  0 & \z^{-1}
\end{pmatrix} \ \ , \ \
  \t =
\begin{pmatrix}
  0 & 1 \\
  1 & 0
\end{pmatrix}
\]
where $\z$ is a primitive $r$-th root of unity.
Now we can let $e$ be the primitive idempotent corresponding to any of the one dimensional representations.  These are the trivial representation and
$\wedge^2 V$ when $r$ is odd, and there are four one dimensional representations when $r$ is even.  For more details, see Section 8.2 of \cite{CHI} where this example is referred to as type $BL$ for $r$ odd and type $B$ for $r$ even and in Example 5.1 of \cite{BSW} the dihedral group of order $8$, that is, $r=4$ is treated.

For another specific example, where the group does not contain any pseudo reflections, we can let $G$ be the subgroup of $\GL(V)$ generated by
$$ \begin{pmatrix}
\zeta_{6} & 0 \\ 0 & \zeta_{6} \end{pmatrix},
\begin{pmatrix}
0 & -\zeta_8 \\
-\zeta_8^3 & 0 \end{pmatrix},
\begin{pmatrix}
-\zeta_8 & 0 \\
0 & -\zeta_8^3  \end{pmatrix}$$
where $\zeta_n$ is a primitive $n^{th}$ root of unity.
All possible primitive idempotents will satisfy the conditions of Corollary \ref{cor8.2} so  $\rm{gl.dim} \Gamma = 3$.

For one last example, we generalize the above corollary to the case of more than one idempotent and we omit the proof which also follows from Proposition \ref{prop1}.
\begin{Cor} Let $A$ be a skew group algebra of dimension two as above.
Let $e$ be the idempotent corresponding to a direct summand $W$ of $\bigopls_{i=1}^n W_i$ and let $\Gamma=(1-e)A(1-e)$.
If $\Hom_G(W,W\otimes V)=0$ and
$\Hom(W,W \otimes \wedge^2 V)=0$ then $\rm{gl.dim} \Gamma \le 3.$
\end{Cor}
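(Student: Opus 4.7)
The plan is to deduce the corollary directly from Proposition \ref{prop1}, by verifying that the hypotheses on $W$ force $S_e$ to be self-orthogonal, and then tracking projective/injective dimensions against the fact that ${\rm gl.dim}\,A = 2$.

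First I would unwind what $S_e$ looks like in terms of simples. The idempotent $e$ corresponds to a direct summand $W = \bigopls_i W_i^{m_i}$ of the regular representation, so $S_e$ decomposes as a direct sum (with multiplicities) of the simples $S_i$ attached to the irreducibles $W_i$ appearing in $W$. Using the projective resolution $0 \to P_{\tau(i)} \to \bigopls_{i \to j} P_j \to P_i \to S_i \to 0$ already recorded in the excerpt, one reads off
\[
\Ext^1_A(S_i,S_j) \cong \Hom_G(W_i, W_j \otimes V), \qquad \Ext^2_A(S_i,S_j) \cong \Hom_G(W_i, W_j \otimes \wedge^2 V),
\]
and all higher Ext groups vanish because ${\rm gl.dim}\,A = 2$.

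Summing over the isotypic components of $W$, the two assumptions give
\[
\Ext^1_A(S_e,S_e) \cong \Hom_G(W, W \otimes V) = 0, \qquad \Ext^2_A(S_e,S_e) \cong \Hom_G(W, W \otimes \wedge^2 V) = 0,
\]
and all $\Ext^i_A(S_e,S_e)$ for $i \geq 3$ vanish trivially. Hence $S_e$ is self-orthogonal, which is the hypothesis of Proposition \ref{prop1}.

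Finally I would invoke part (2) of Proposition \ref{prop1}, which yields
\[
{\rm gl.dim}\,\Gamma \leq {\rm max}\bigl({\rm id}_A S_e + {\rm pd}_A S_e - 1,\; {\rm gl.dim}\,A\bigr).
\]
Since ${\rm gl.dim}\,A = 2$, both ${\rm id}_A S_e$ and ${\rm pd}_A S_e$ are at most $2$, so the right-hand side is at most ${\rm max}(2+2-1,\,2) = 3$. The only real content is the self-orthogonality verification above; everything else is bookkeeping, so no genuine obstacle is expected.
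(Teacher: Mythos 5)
Your argument is correct and is exactly the route the paper intends: check that the hypotheses force $S_e$ to be self-orthogonal (all higher Ext groups vanish automatically because ${\rm gl.dim}\,A = 2$), then invoke Proposition~\ref{prop1}(2) with both ${\rm id}_A S_e$ and ${\rm pd}_A S_e$ bounded by $2$. One small bookkeeping remark: the minimal resolution gives $\Ext^2_A(S_i,S_j) \ne 0$ precisely when $\tau(i)=j$, i.e.\ $W_j \cong W_i \otimes \wedge^2 V$, so at the level of an individual pair the correct identification is $\Ext^2_A(S_i,S_j) \cong \Hom_G(W_j, W_i \otimes \wedge^2 V)$ (your indices are transposed, equivalently there is a hidden dual on $\wedge^2 V$); this transposition washes out once you sum over all pairs of summands of $W$, so your aggregate identification $\Ext^2_A(S_e,S_e) \cong \Hom_G(W, W\otimes\wedge^2 V)$ and the final conclusion stand.
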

It is interesting to compare this with Theorem 2.10 of \cite{IW} where $e$ corresponds to the set of special Cohen-Macaulay modules over $k[V]^G$.
\bigskip

\noindent{\emph{Acknowledgment}.} The authors are supported by NSERC while the second author is also supported in part by AARMS.  The first author would like to thank Michael Wemyss for helpful comments. Both authors would like to express their gratitude to an anonymous referee for pointing out Lemma \ref{MinResolution2}, which made Theorem \ref{MainTheo} much stronger.

\end{document}